\numberwithin{equation}{section}
\newtheorem{theorem}{Theorem}[section]
\newtheorem{corollary}[theorem]{Corollary}
\newtheorem{lemma}[theorem]{Lemma}
\newtheorem{prop}[theorem]{Proposition}
\theoremstyle{definition}
\newtheorem{remark}[theorem]{Remark}
\theoremstyle{definition}
\theoremstyle{definition}
\def\dashint{\operatorname%
{\,\,\text{\bf-}\kern-.98em\DOTSI\intop\ilimits@\!\!}}
\def\\det{\text{\det}}
\def\.5{\frac{1}{2}}
\newcommand{\RN}[1]{%
  \textup{\uppercase\expandafter{\romannumeral#1}}%
}
\renewcommand{\epsilon}{\varepsilon}
\newcounter{marnote}
\begin{document}

\title[Stress concentration factors]{Stress concentration factors for the Lam\'{e} system arising from composites}

\author[C.X. Miao]{Changxing Miao}
\address[C.X. Miao] {1. Beijing Computational Science Research Center, Beijing 100193, China.}
\address{2. Institute of Applied Physics and Computational Mathematics, P.O. Box 8009, Beijing, 100088, China.}
\email{miao\_changxing@iapcm.ac.cn}

\author[Z.W. Zhao]{Zhiwen Zhao}

\address[Z.W. Zhao]{Beijing Computational Science Research Center, Beijing 100193, China.}
%\address{2. School of Mathematical Sciences, Beijing Normal University, Beijing 100875, China.}
\email{zwzhao365@163.com}

%\footnote{}

\date{\today} % delete this line to display the current date

%%% BEGIN DOCUMENT

\maketitle
%\tableofcontents
\begin{abstract}
For two neighbouring stiff inclusions, the stress, which is the gradient of a solution to the Lam\'{e} system of linear elasticity, may exhibit singular behavior as the distance between these two inclusions becomes arbitrarily small. In this paper, a family of stress concentration factors, which determine whether the stress will blow up or not, are accurately constructed in the presence of the generalized $m$-convex inclusions in all dimensions. We then use these stress concentration factors to establish the optimal upper and lower bounds on the stress blow-up rates in any dimension and meanwhile give a precise asymptotic expression of the stress concentration for interfacial boundaries of inclusions with different principal curvatures in dimension three. Finally, the corresponding results for the perfect conductivity problem are also presented.

\end{abstract}

%\noindent{\bf{Keywords}}: Composites; stress concentration factors; gradient estimates; asymptotic formulas.

\section{Introduction}%\label{intro}

Let $D\subseteq\mathbb{R}^{d}\,(d\geq2)$ be a bounded domain with $C^{2,\alpha}\,(0<\alpha<1)$ boundary. It contains two $C^{2,\alpha}$-subdomains $D_{1}^{\ast}$ and $D_{2}$, which are far away from $\partial D$ and touch only at the origin with the inner normal vector of $\partial D_{1}^{\ast}$ pointing in the positive $x_{d}$-axis. By translating $D_{1}^{\ast}$ by a small positive constant $\varepsilon$ along $x_{d}$-axis, we obtain
\begin{align*}
D_{1}^{\varepsilon}:=D_{1}^{\ast}+(0',\varepsilon).
\end{align*}
Here and afterwards, we utilize superscript prime to represent ($d-1$)-dimensional domains and variables, such as $B'$ and $x'$. For the sake of simplicity, we drop superscripts and write
\begin{align*}
D_{1}:=D_{1}^{\varepsilon},\quad\Omega:=D\setminus\overline{D_{1}\cup D_{2}},\quad\mathrm{and}\quad\Omega^{\ast}:=D\setminus\overline{D_{1}^{\ast}\cup D_{2}}.
\end{align*}
Before giving the Lam\'{e} system of the elasticity problem arising from high-contrast composites, we first introduce some physical quantities. Let $\mathbb{C}^{0}:=(C_{ijkl}^{0})$ be the elasticity tensor whose elements are represented as
\begin{align}\label{DWQ}
C_{ijkl}^0=\lambda\delta_{ij}\delta_{kl} +\mu(\delta_{ik}\delta_{jl}+\delta_{il}\delta_{jk}),\quad i,j,k,l=1,2,...,d,
\end{align}
where the Lam\'{e} pair $(\lambda,\mu)$ verifies the following strong ellipticity condition
\begin{align}\label{JFAMC001}
\tau_{0}<\mu,\,d\lambda+2\mu<\frac{1}{\tau_{0}},\quad\text{for\;some\;positive\;constant\;}\tau_{0}\;\text{independent\;of\;}\varepsilon,
\end{align}
$\delta_{ij}$ represents the kronecker symbol: $\delta_{ij}=0$ for $i\neq j$, $\delta_{ij}=1$ for $i=j$. For later use, we list some properties of the tensor $\mathbb{C}^{0}$. First, the components $C_{ijkl}^{0}$ defined in \eqref{DWQ} satisfy the symmetry property as follows:
\begin{align}\label{symm}
C_{ijkl}^{0}=C_{klij}^{0}=C_{klji}^{0},\quad i,j,k,l=1,2,...,d.
\end{align}
Given any two $d\times d$ matrices $\mathbb{A}=(a_{ij})$ and $\mathbb{B}=(b_{ij})$, denote
\begin{align*}
(\mathbb{C}^{0}\mathbb{A})_{ij}=\sum_{k,l=1}^{d}C_{ijkl}^{0}a_{kl},\quad\hbox{and}\quad(\mathbb{A},\mathbb{B})\equiv \mathbb{A}:\mathbb{B}=\sum_{i,j=1}^{d}a_{ij}b_{ij}.
\end{align*}
Then we have
$$(\mathbb{C}^{0}\mathbb{A},\mathbb{B})=(\mathbb{A}, \mathbb{C}^{0}\mathbb{B}).$$
Using (\ref{symm}), we get that $\mathbb{C}^{0}$ satisfies the following ellipticity condition: for every $d\times d$ real symmetric matrix $\xi=(\xi_{ij})$,
\begin{align}\label{ellip}
\min\{2\mu, d\lambda+2\mu\}|\xi|^2\leq(\mathbb{C}^{0}\xi, \xi)\leq\max\{2\mu, d\lambda+2\mu\}|\xi|^2,\quad |\xi|^2=\sum\limits_{i,j=1}^{d}\xi_{ij}^2.
\end{align}

Denote the linear space of rigid displacement by
\begin{align}\label{LAK01}
\Psi:=\{\psi\in C^1(\mathbb{R}^{d}; \mathbb{R}^{d})\ |\ \nabla\psi+(\nabla\psi)^T=0\}
\end{align}
and a basis of $\Psi$ by
\begin{align}\label{OPP}
\left\{\,e_{i},\,x_{k}e_{j}-x_{j}e_{k}\,\big|\,1\leq i\leq d,\,1\leq j<k\leq d\,\right\},
\end{align}
where $\{e_{1},...,e_{d}\}$ is the standard basis of $\mathbb{R}^{d}$. For notational simplicity, rewrite this basis as $\big\{\psi_{\alpha}\big|\,\alpha=1,2,...,\frac{d(d+1)}{2}\big\}$. With regard to the order of every element $\psi_{\alpha}$, we make the rule as follows: $\psi_{\alpha}=e_{\alpha}$ if $\alpha=1,2,...,d$; $\psi_{\alpha}=x_{d}e_{\alpha-d}-x_{\alpha-d}e_{d}$ if $\alpha=d+1,...,2d-1$; if $\alpha=2d,...,\frac{d(d+1)}{2}\,(d\geq3)$, then there exist two indices $1\leq i_{\alpha}<j_{\alpha}<d$ such that
$\psi_{\alpha}=(0,...,0,x_{j_{\alpha}},0,...,0,-x_{i_{\alpha}},0,...,0)$.

Denote the elastic displacement field by a vector-valued function $u=(u^{1},u^{2},...,u^{d})^{T}:D\rightarrow\mathbb{R}^{d}$.  The mathematical model of a high-contrast composite material can be formulated as the following Lam\'{e} system:
\begin{align}\label{La.002}
\begin{cases}
\mathcal{L}_{\lambda, \mu}u:=\nabla\cdot(\mathbb{C}^0e(u))=0,\quad&\hbox{in}\ \Omega,\\
u|_{+}=u|_{-},&\hbox{on}\ \partial{D}_{i},\,i=1,2,\\
e(u)=0,&\hbox{in}~D_{i},\,i=1,2,\\
\int_{\partial{D}_{i}}\frac{\partial u}{\partial \nu_0}\big|_{+}\cdot\psi_{\alpha}=0,&i=1,2,\,\alpha=1,2,...,\frac{d(d+1)}{2},\\
u=\varphi,&\hbox{on}\ \partial{D},
\end{cases}
\end{align}
where $\varphi=(\varphi^{1},\varphi^{2},...,\varphi^{d})^{T}\in C^{2}(\partial D;\mathbb{R}^{d})$ is a given boundary data, $e(u)=\frac{1}{2}(\nabla u+(\nabla u)^{T})$ is the strain tensor, $\nabla u$ represents the stress, the co-normal derivative $\frac{\partial u}{\partial \nu_0}\big|_{+}$ is defined by
\begin{align*}
\frac{\partial u}{\partial \nu_0}\Big|_{+}&:=(\mathbb{C}^0e(u))\nu=\lambda(\nabla\cdot u)\nu+\mu(\nabla u+(\nabla u)^T)\nu,
\end{align*}
and $\nu$ denotes the unit outer normal of $\partial D_{i}$, $i=1,2$. Here and below, the subscript $\pm$ shows the limit from outside and inside the domain, respectively. We here remark that the existence, uniqueness and regularity of weak solutions to \eqref{La.002} have been established in previous work \cite{BLL2015} by using a variational argument. Moreover, the $H^{1}$ weak solution $u$ of problem \eqref{La.002} has been proved to be in $C^1(\overline{\Omega};\mathbb{R}^{d})\cap C^1(\overline{D}_{1}\cup\overline{D}_{2};\mathbb{R}^{d})$ for any $C^{2,\alpha}$-domains.

A straightforward calculation gives that
\begin{align*}
\mathcal{L}_{\lambda, \mu}u=\mu\Delta u+(\lambda+\mu)\nabla(\nabla\cdot u),
\end{align*}
and, for any open set $O$ and $u, v\in C^2(O;\mathbb{R}^{d})$,
\begin{align}\label{Le2.01222}
\int_O(\mathbb{C}^0e(u), e(v))\,dx=-\int_O\left(\mathcal{L}_{\lambda, \mu}u\right)\cdot v+\int_{\partial O}\frac{\partial u}{\partial \nu_0}\Big|_{+}\cdot v.
\end{align}

As shown in \cite{BLL2015,BLL2017}, it follows from the second and third lines of \eqref{La.002} that the solution $u$ of \eqref{La.002} can be decomposed as follows:
\begin{equation}\label{Decom}
u=\sum_{\alpha=1}^{\frac{d(d+1)}{2}}C_1^{\alpha}v_{1}^{\alpha}+\sum_{\alpha=1}^{\frac{d(d+1)}{2}}C_2^{\alpha}v_2^{\alpha}+v_{0},\qquad\mathrm{in}\;\Omega,
\end{equation}
where the free constants $C_{i}^{\alpha}$, $i=1,2,\,\alpha=1,2,...,\frac{d(d+1)}{2}$ can be determined by boundary condition in the fourth line of \eqref{La.002}, $v_{0}$ and $v_{i}^{\alpha}\in{C}^{2}(\Omega;\mathbb{R}^d)$, $i=1,2$, $\alpha=1,2,...,\frac{d(d+1)}{2}$, respectively, solve
\begin{equation}\label{qaz001}
\begin{cases}
\mathcal{L}_{\lambda,\mu}v_{0}=0,&\mathrm{in}~\Omega,\\
v_{0}=0,&\mathrm{on}~\partial{D}_{1}\cup\partial{D_{2}},\\
v_{0}=\varphi,&\mathrm{on}~\partial{D},
\end{cases}\quad
\begin{cases}
\mathcal{L}_{\lambda,\mu}v_{i}^{\alpha}=0,&\mathrm{in}~\Omega,\\
v_{i}^{\alpha}=\psi_{\alpha},&\mathrm{on}~\partial{D}_{i},~i=1,2,\\
v_{i}^{\alpha}=0,&\mathrm{on}~\partial{D_{j}}\cup\partial{D},~j\neq i.
\end{cases}
\end{equation}
The mathematical problem of interest is to study the blow-up feature of the stress $\nabla u$ in the thin gap between inclusions. Denote
\begin{align}\label{CTL001}
u_{b}:=\sum_{\alpha=1}^{\frac{d(d+1)}{2}}C_{2}^\alpha({v}_{1}^\alpha+{v}_{2}^\alpha)+v_{0}.
\end{align}
By linearity, $u_{b}$ actually satisfies
\begin{align}\label{GLQ}
\begin{cases}
\mathcal{L}_{\lambda,\mu}u_{b}=0,&\mathrm{in}\;\Omega,\\
u_{b}=\sum\limits^{\frac{d(d+1)}{2}}_{\alpha=1}C^{\alpha}_{2}\psi_{\alpha},&\mathrm{on}\;\partial D_{1}\cup\partial D_{2},\\
%\int_{\partial D_{1}}\frac{\partial u_{b}}{\partial\nu_{0}}\big|_{+}\cdot\psi_{\beta}=0,&\beta=1,2,...,\frac{d(d+1)}{2},\\
u_{b}=\varphi,&\mathrm{on}\;\partial D.
\end{cases}
\end{align}
According to \eqref{Decom} and \eqref{CTL001}, we split $\nabla u$ into two parts as follows:
\begin{align}\label{Decom002}
\nabla{u}=&\sum_{\alpha=1}^{\frac{d(d+1)}{2}}(C_{1}^\alpha-C_{2}^\alpha)\nabla{v}_{1}^\alpha+\nabla u_{b}.
\end{align}
We here would like to point out that $\sum_{\alpha=1}^{\frac{d(d+1)}{2}}(C_{1}^\alpha-C_{2}^\alpha)\nabla{v}_{1}^\alpha$ is the singular part and appears blow-up, while $\nabla u_{b}$ is the regular part and possesses the exponentially decaying property. For every $\alpha=1,2,...,\frac{d(d+1)}{2}$, the singular behavior of $\nabla v_{1}^{\alpha}$ can be made clear by constructing its explicit leading term. Then it remains to calculate the difference $C_{1}^{\alpha}-C_{2}^{\alpha}$ to capture the singularity of $\sum_{\alpha=1}^{\frac{d(d+1)}{2}}(C_{1}^\alpha-C_{2}^\alpha)\nabla{v}_{1}^\alpha$. In light of decomposition \eqref{Decom002}, it follows from the fourth line of \eqref{La.002} that
\begin{align}\label{JGRO001}
\sum\limits_{\alpha=1}^{\frac{d(d+1)}{2}}(C_{1}^\alpha-C_{2}^{\alpha}) a_{11}^{\alpha\beta}=\mathcal{B}_{\beta}[\varphi],\quad\beta=1,2,...,\frac{d(d+1)}{2},
\end{align}
where, for $\alpha,\beta=1,2,...,\frac{d(d+1)}{2}$,
\begin{align}\label{LGBC}
a_{11}^{\alpha\beta}:=-\int_{\partial{D}_{1}}\frac{\partial v_{1}^{\alpha}}{\partial \nu_0}\large\Big|_{+}\cdot\psi_{\beta},\quad \mathcal{B}_{\beta}[\varphi]:=\int_{\partial D_{1}}\frac{\partial u_{b}}{\partial\nu_{0}}\Big|_{+}\cdot\psi_{\beta}.
\end{align}
In the following, we will give a precise computation for each element of the coefficient matrix $(a_{11}^{\alpha\beta})_{\frac{d(d+1)}{2}\times\frac{d(d+1)}{2}}$ of \eqref{JGRO001}, whose values are explicit and depend not on the boundary data $\varphi$, see Lemma \ref{lemmabc} below. Each $\mathcal{B}_{\beta}[\varphi]$, by contrast, will vary with the boundary data $\varphi$ according to the definition of $u_{b}$ in \eqref{GLQ}. This fact implies that the values of  $C_{1}^{\alpha}-C_{2}^{\alpha}$, $\alpha=1,2,...,\frac{d(d+1)}{2}$ are determined by the free boundary value feature of $\mathcal{B}_{\beta}[\varphi]$, $\beta=1,2,...,\frac{d(d+1)}{2}$. So, we call $\mathcal{B}_{\beta}[\varphi]$ the $stress\; concentration\;factor\;or\;blow$-$up\;factor$.

Li \cite{L2018} firstly studied the asymptotic behavior of stress concentration factor $\mathcal{B}_{\beta}[\varphi]$ and obtained the following result (see Proposition 2.1 of \cite{L2018}): for $\beta=1,2,...,\frac{d(d+1)}{2}$,
\begin{align}\label{KGA001}
\mathcal{B}_{\beta}[\varphi]=\mathcal{B}_{\beta}^{\ast}[\varphi]+O(\max\{\varepsilon^{\frac{1}{3}},\rho_{d}(\varepsilon)\}),\quad\rho_{d}(\varepsilon)=
\begin{cases}
\sqrt{\varepsilon},&d=2,\\
|\ln\varepsilon|^{-1},&d=3,
\end{cases}
\end{align}
where $\mathcal{B}_{\beta}^{\ast}[\varphi]$ is given by
\begin{align}\label{KGF001}
\quad\mathcal{B}_{\beta}^{\ast}[\varphi]:=\int_{\partial D_{1}^{\ast}}\frac{\partial u_{b}^{\ast}}{\partial\nu_{0}}\Big|_{+}\cdot\psi_{\beta}.
\end{align}
Here $u_{b}^{\ast}$ is the solution of
\begin{align}\label{LCRD001}
\begin{cases}
\mathcal{L}_{\lambda,\mu}u_{b}^{\ast}=0,&\mathrm{in}\;\Omega^{\ast},\\
u_{b}^{\ast}=\sum\limits^{\frac{d(d+1)}{2}}_{\alpha=1}C^{\alpha}_{\ast}\psi_{\alpha},&\mathrm{on}\;(\partial D_{1}^{\ast}\setminus\{0\})\cup\partial D_{2},\\
%\int_{\partial D_{1}^{\ast}}\frac{\partial u_{b}^{\ast}}{\partial\nu_{0}}\big|_{+}\cdot\psi_{\beta}=0,&\beta=1,2,...,\frac{d(d+1)}{2},\\
u_{b}^{\ast}=\varphi,&\mathrm{on}\;\partial D,
\end{cases}
\end{align}
where the free constants $C_{\ast}^{\alpha}$, $\alpha=1,2,...,\frac{d(d+1)}{2}$ are determined by the following condition:
\begin{align}\label{LDCZ003}
\int_{\partial D_{1}^{\ast}\cup\partial D_{2}}\frac{\partial u_{b}^{\ast}}{\partial \nu_0}\big|_{+}\cdot\psi_{\beta}=0,\quad\beta=1,2,...,\frac{d(d+1)}{2}.
\end{align}
The key point to prove the convergence in \eqref{KGA001} lies in establishing the convergence that $C_{2}^{\alpha}\rightarrow C_{\ast}^{\alpha}$, as $\varepsilon\rightarrow0$. For that purpose, Li \cite{L2018} imposed some special symmetric conditions on the domain and the parity conditions on the boundary data, which leads to that $C_{1}^{\alpha}-C_{2}^{\alpha}=0$, $\alpha=d+1,...,\frac{d(d+1)}{2}$. Then using this fact, Li proved that for $\alpha=1,2,...,d$, $C_{\ast}^{\alpha}$ is the limit of $\frac{1}{2}(C_{1}^{\alpha}+C_{2}^{\alpha})$ as the distance $\varepsilon$ goes to zero, that is,
\begin{align*}
\frac{1}{2}(C_{1}^{\alpha}+C_{2}^{\alpha})=C_{\ast}^{\alpha}+O(\rho_{d}(\varepsilon)),\quad\alpha=1,2,...,d.
\end{align*}
This, together with the fact that $|C_{1}^{\alpha}-C_{2}^{\alpha}|\leq C\rho_{d}(\varepsilon)$, gives that
\begin{align}\label{LGAN}
C_{2}^{\alpha}=C_{\ast}^{\alpha}+O(\rho_{d}(\varepsilon)),\quad\alpha=1,2,...,d.
\end{align}
Since the establishment of \eqref{LGAN} depends on some strict symmetric conditions on the domain and the boundary data and the differences of $C_{1}^{\alpha}-C_{2}^{\alpha}$, $\alpha=1,2,...,d$, then the idea in \cite{L2018} cannot be used to deal with the generalized $m$-convex inclusions in all dimensions, especially when $m<d-1$. Then the main objective of this paper is to get rid of these strong assumed conditions in \cite{L2018} and establish a unified convergence result between $\mathcal{B}_{\beta}[\varphi]$ and $\mathcal{B}_{\beta}^{\ast}[\varphi]$ for the generalized $m$-convex inclusions in any dimension. As an immediate consequence of the stress concentration factors $\mathcal{B}_{\beta}^{\ast}[\varphi]$, $\alpha=1,2,...,\frac{d(d+1)}{2}$, the optimal upper and lower bound estimates and asymptotic expansions of the stress concentration are also established.

%The problem of estimating $|\nabla u|$ in the presence of closely located inclusions was first proposed in \cite{BASL1999} in relation to initiation and growth of damage in composites.
%This work is stimulated by previous numerical investigation in \cite{BASL1999} of Babu\u{s}ka et al.,
The problem of estimating $|\nabla u|$ in the presence of closely located inclusions was first proposed in \cite{BASL1999} of Babu\u{s}ka et al., where the Lam\'{e} system was assumed and they observed computationally that the gradient of the solution keeps bounded regardless of the distance between inclusions. Li and Nirenberg \cite{LN2003} demonstrated this numerical observation and established stronger $C^{1,\alpha}$ estimates for general second-order elliptic systems with piecewise H\"{o}lder continuous coefficients. For the corresponding results on second-order elliptic equations of divergence form, see \cite{LV2000,BV2000,DL2019}. It is worth emphasizing that the gradient estimates in \cite{DL2019} showed the explicit dependence on the ellipticity coefficients, which answered open problem $(b)$ in \cite{LV2000}. For more related open problems, we refer to page 94 of \cite{LV2000} and page 894 of \cite{LN2003}. In addition, Calo, Efendiev and Galvis \cite{CEG2014} gave an asymptotic expansion of a solution to elliptic equation for both high- and low-conductivity inclusions and presented a procedure to compute the terms in the expansion. By utilizing the single and double layer potentials with image line charges, Kim and Lim \cite{KL2019} recently obtained an asymptotic expression of the solution to the conductivity problem in the presence of the core-shell geometry with circular boundaries in two dimensions.

There appears no gradient blow-up in the aforementioned works related to elliptic equations and systems with finite coefficients. However, if the elliptic coefficients are allowed to deteriorate, the situation will be very different. For the scalar conductivity problem, when the conductivity $k$ of inclusions degenerates to infinity, we obtain its limit equation independent of the conductivity $k$, which is called the perfect conductivity equation. It has been discovered by many mathematicians that the electric field, which is the gradient of a solution to the perfect conductivity problem, blows up at the rate of $\varepsilon^{-1/2}$ in dimension two \cite{AKLLL2007,BC1984,BLY2009,AKL2005,Y2007,Y2009,K1993}, $|\varepsilon\ln\varepsilon|^{-1}$ in dimension three \cite{BLY2009,LY2009,BLY2010,L2012}, and $\varepsilon^{-1}$ in dimensions greater than three \cite{BLY2009}, respectively. Besides these aforementioned works related to the gradient estimates, there is a long list of papers \cite{ACKLY2013,KLY2013,KLY2014,KLY2015,LLY2019,Li2020,ZH202101} to pursue a precise characterization for the singular behavior of the gradient. Bonnetier and Triki \cite{BT2013} presented the asymptotic expansions of the eigenvalues for the Poincar\'{e} variational problem in the presence of two nearly touching inclusions as the distance between two inclusions approaches to zero. Kang and Yun \cite{KY201902,KY202002} gave a quantitative characterization for enhancement of the field induced by an emitter in the narrow regions between two circular and spherical inclusions, respectively. The techniques used in above-mentioned works are designed to solve only the linear problem. In order to deal with the nonlinear $p$-Laplace equation, Gorb and Novikov \cite{GN2012,G2015} utilized the method of barriers to obtain $L^{\infty}$ estimates of the gradient of the solution and revealed the explicit dependence of the gradient blow-up rate on nonlinear index $p$. Ciraolo and Sciammetta \cite{CS2019,CS20192} further extended their results to the Finsler $p$-Laplacian.

Although much progress has been made for the scalar equation mentioned above, it is not easy to extend to the linear systems of elasticity for the sake of some technical difficulties. For example, the maximum principle does not hold for the systems of equations. In \cite{LLBY2014} Li, Li, Bao and Yin created an ingenious iteration technique to overcome these difficulties and proved that the gradients for solutions to a class of elliptic systems decay exponentially fast in a thin gap when the same boundary data is imposed on the upper and bottom boundaries of this gap. Subsequently, the iterate technique was extensively used to study the singular behavior of the gradient of a solution to the Lam\'{e} system with partially infinity coefficients. Bao, Li and Li \cite{BLL2015,BLL2017} obtained the pointwise upper bounds on the gradient in all dimensions. Li \cite{L2018} then constructed a lower bound of the gradient by introducing a unified blow-up factor, which answers the optimality of the blow-up rate in dimensions two and three. Li and Xu \cite{LX2020} further improved the results in \cite{L2018} to be precise asymptotic expansions. The gradient estimates and asymptotics in \cite{L2018,LX2020} were established only when the domain and boundary data satisfied some strictly symmetric conditions. The interior estimates mentioned above were also generalized to the boundary case \cite{LZ2019,BJL2017}. Recently, Kang and Yu \cite{KY2019} gave a complete description for the singular behavior of the stress by using nuclei of strain to construct the explicit singular functions. It is worth to mention that the main tools used in \cite{KY2019} are the layer potential techniques and the variational principle, which is different from the iterate technique adopted in the previous works. For more related investigations, see \cite{HJL2018,LX2020,GB2005,G2015,BT2012,ABTV2015} and the references therein.

To list the main results in a precise manner, we further describe the domain and introduce some notations. Suppose that there exists a $\varepsilon$-independent positive constant $R$ such that the top and bottom boundaries of the narrow region between $D_{1}$ and $D_{2}$ are, respectively, the graphs of two $C^{2,\alpha}$ functions $\varepsilon+h_{1}(x')$ and $h_{2}(x')$, and $h_{i}$, $i=1,2$, satisfy the following conditions:
\begin{enumerate}
{\it\item[(\bf{H1})]
$\tau_{1}|x'|^{m}\leq h_{1}(x')-h_{2}(x')\leq\tau_{2}|x'|^{m},\;\mathrm{if}\;x'\in B_{2R}',$
\item[(\bf{H2})]
$|\nabla_{x'}^{j}h_{i}(x')|\leq \tau_{3}|x'|^{m-j},\;\mathrm{if}\;x'\in B_{2R}',\;i,j=1,2,$
\item[(\bf{H3})]
$\|h_{1}\|_{C^{2,\alpha}(B'_{2R})}+\|h_{2}\|_{C^{2,\alpha}(B'_{2R})}\leq \tau_{4},$}
\end{enumerate}
where $m\geq 2$ and $\tau_{i}>0,i=1,2,3,4$, are constants independent of $\varepsilon$. In addition, assume that for $i=1,...,d-1$, $h_{1}(x')-h_{2}(x')$ is even with respect to $x_{i}$ in $B_{R}'$. For $z'\in B'_{R}$ and $0<t\leq2R$, define the thin gap by
\begin{align*}
\Omega_{t}(z'):=&\left\{x\in \mathbb{R}^{d}~\big|~h_{2}(x')<x_{d}<\varepsilon+h_{1}(x'),~|x'-z'|<{t}\right\}.
\end{align*}
For notational simplicity, let the abbreviated notation $\Omega_{t}$ represent $\Omega_{t}(0')$ and denote its upper and lower boundaries by
\begin{align*}
\Gamma^{+}_{r}:=\left\{x\in\mathbb{R}^{d}|\,x_{d}=\varepsilon+h_{1}(x'),\;|x'|<r\right\},~~
\Gamma^{-}_{r}:=\left\{x\in\mathbb{R}^{d}|\,x_{d}=h_{2}(x'),\;|x'|<r\right\},
\end{align*}
respectively.

Define a scalar auxiliary function $\bar{v}\in C^{2}(\mathbb{R}^{d})$ satisfying that $\bar{v}=1$ on $\partial D_{1}$, $\bar{v}=0$ on $\partial D_{2}\cup\partial D$,
\begin{align}\label{zh001}
\bar{v}(x',x_{d}):=\frac{x_{d}-h_{2}(x')}{\varepsilon+h_{1}(x')-h_{2}(x')},\;\,\mathrm{in}\;\Omega_{2R},\quad\mbox{and}~\|\bar{v}\|_{C^{2}(\Omega\setminus\Omega_{R})}\leq C.
\end{align}
Then we introduce a family of vector-valued auxiliary functions as follows: for $\alpha=1,2,...,\frac{d(d+1)}{2}$,
\begin{align}\label{zzwz002}
\bar{u}_{1}^{\alpha}=&\psi_{\alpha}\bar{v}+\mathcal{F}_{\alpha},\quad \mathrm{in}\;\Omega_{2R},
\end{align}
where $\psi_{\alpha}$ is defined in (\ref{OPP}) and
\begin{align}\label{QLA001}
\mathcal{F}_{\alpha}=\frac{\lambda+\mu}{\mu}f(\bar{v})\psi^{d}_{\alpha}\sum^{d-1}_{i=1}\partial_{x_{i}}\delta\,e_{i}+\frac{\lambda+\mu}{\lambda+2\mu}f(\bar{v})\sum^{d-1}_{i=1}\psi^{i}_{\alpha}\partial_{x_{i}}\delta\,e_{d},
\end{align}
and
\begin{align}\label{deta}
\delta(x'):=\varepsilon+h_{1}(x')-h_{2}(x'),\quad f(\bar{v}):=\frac{1}{2}\left(\bar{v}-\frac{1}{2}\right)^{2}-\frac{1}{8}.
\end{align}
It is worth mentioning that the correction terms $\mathcal{F}_{\alpha}$, $\alpha=1,2,...,d$ were previously captured in \cite{LX2020}.

For $i,j=1,2,\,\alpha,\beta=1,2,...,\frac{d(d+1)}{2}$, denote
\begin{align}\label{LMZR}
a_{ij}^{\ast\alpha\beta}=\int_{\Omega^{\ast}}(\mathbb{C}^0e(v_{i}^{\ast\alpha}), e(v_j^{\ast\beta}))dx,\quad b_i^{\ast\alpha}=-\int_{\partial D}\frac{\partial v_{i}^{\ast\alpha}}{\partial \nu_0}\large\Big|_{+}\cdot v_{0}^{\ast},
\end{align}
where $v_{0}^{\ast}$ and $v_{i}^{\ast\alpha}\in{C}^{2}(\Omega^{\ast};\mathbb{R}^d)$, $i=1,2$, $\alpha=1,2,...,\frac{d(d+1)}{2}$, respectively, solve
\begin{align}\label{ZG001}
\begin{cases}
\mathcal{L}_{\lambda,\mu}v_{0}^{\ast}=0,&\mathrm{in}~\Omega^{\ast},\\
v_{0}^{\ast}=0,&\mathrm{on}~\partial{D}_{1}^{\ast}\cup\partial{D_{2}},\\
v_{0}^{\ast}=\varphi,&\mathrm{on}~\partial{D},
\end{cases}
\end{align}
and
\begin{equation}\label{qaz001111}
\begin{cases}
\mathcal{L}_{\lambda,\mu}v_{1}^{\ast\alpha}=0,&\mathrm{in}~\Omega^{\ast},\\
v_{1}^{\ast\alpha}=\psi^{\alpha},&\mathrm{on}~\partial{D}_{1}^{\ast}\setminus\{0\},\\
v_{1}^{\ast\alpha}=0,&\mathrm{on}~\partial{D_{2}}\cup\partial{D},
\end{cases}\quad
\begin{cases}
\mathcal{L}_{\lambda,\mu}v_{2}^{\ast\alpha}=0,&\mathrm{in}~\Omega^{\ast},\\
v_{2}^{\ast\alpha}=\psi^{\alpha},&\mathrm{on}~\partial{D}_{2},\\
v_{2}^{\alpha}=0,&\mathrm{on}~(\partial{D_{1}^{\ast}}\setminus\{0\})\cup\partial{D}.
\end{cases}
\end{equation}
Remark that the definition of $a_{ij}^{\ast\alpha\beta}$ is only valid under certain conditions, see Lemma \ref{lemmabc} below. Denote
\begin{align}
&\mathbb{A}^{\ast}=(a_{11}^{\ast\alpha\beta})_{\frac{d(d+1)}{2}\times\frac{d(d+1)}{2}},\quad \mathbb{B}^{\ast}=\bigg(\sum\limits^{2}_{i=1}a_{i1}^{\ast\alpha\beta}\bigg)_{\frac{d(d+1)}{2}\times\frac{d(d+1)}{2}},\label{WZW}\\
&\mathbb{C}^{\ast}=\bigg(\sum\limits^{2}_{j=1}a_{1j}^{\ast\alpha\beta}\bigg)_{\frac{d(d+1)}{2}\times\frac{d(d+1)}{2}},\quad \mathbb{D}^{\ast}=\bigg(\sum\limits^{2}_{i,j=1}a_{ij}^{\ast\alpha\beta}\bigg)_{\frac{d(d+1)}{2}\times\frac{d(d+1)}{2}}.\label{WEN002}
\end{align}
We now divide into three cases to introduce the blow-up factor matrices, which will be used to construct a family of unified stress concentration factors.

First, if $m\geq d+1$, then for $\alpha=1,2,...,\frac{d(d+1)}{2}$, by replacing the elements of $\alpha$-th column in the matrix $\mathbb{D}^{\ast}$ by column vector $\Big(\sum\limits_{i=1}^{2}b_{i}^{\ast1},...,\sum\limits_{i=1}^{2}b_{i}^{\ast\frac{d(d+1)}{2}}\Big)^{T}$, we get new matrix $\mathbb{D}^{\ast\alpha}$ as follows:
\begin{gather}\label{ZZ001}
\mathbb{D}^{\ast\alpha}=
\begin{pmatrix}
\sum\limits^{2}_{i,j=1}a_{ij}^{\ast11}&\cdots&\sum\limits_{i=1}^{2}b_{i}^{\ast1}&\cdots&\sum\limits^{2}_{i,j=1}a_{ij}^{\ast1\,\frac{d(d+1)}{2}} \\\\ \vdots&\ddots&\vdots&\ddots&\vdots\\\\ \sum\limits^{2}_{i,j=1}a_{ij}^{\ast\frac{d(d+1)}{2}\,1}&\cdots&\sum\limits_{i=1}^{2}b_{i}^{\ast\frac{d(d+1)}{2}}&\cdots&\sum\limits^{2}_{i,j=1}a_{ij}^{\ast\frac{d(d+1)}{2}\,\frac{d(d+1)}{2}}
\end{pmatrix}.
\end{gather}

Second, if $d-1\leq m<d+1$, then we define
\begin{gather}\mathbb{A}_{0}^{\ast}=\begin{pmatrix} a_{11}^{\ast d+1\,d+1}&\cdots&a_{11}^{\ast d+1\frac{d(d+1)}{2}} \\\\ \vdots&\ddots&\vdots\\\\a_{11}^{\ast\frac{d(d+1)}{2}d+1}&\cdots&a_{11}^{\ast\frac{d(d+1)}{2}\frac{d(d+1)}{2}}\end{pmatrix},\label{LAGT001}\\
\mathbb{B}_{0}^{\ast}=\begin{pmatrix} \sum\limits^{2}_{i=1}a_{i1}^{\ast d+1\,1}&\cdots&\sum\limits^{2}_{i=1}a_{i1}^{\ast d+1\,\frac{d(d+1)}{2}} \\\\ \vdots&\ddots&\vdots\\\\ \sum\limits^{2}_{i=1}a_{i1}^{\ast\frac{d(d+1)}{2}1}&\cdots&\sum\limits^{2}_{i=1}a_{i1}^{\ast\frac{d(d+1)}{2}\frac{d(d+1)}{2}}\end{pmatrix},\notag\\
\mathbb{C}_{0}^{\ast}=\begin{pmatrix} \sum\limits^{2}_{j=1}a_{1j}^{\ast1\,d+1}&\cdots&\sum\limits^{2}_{j=1}a_{1j}^{\ast1\frac{d(d+1)}{2}} \\\\ \vdots&\ddots&\vdots\\\\ \sum\limits^{2}_{j=1}a_{1j}^{\ast\frac{d(d+1)}{2}\,d+1}&\cdots&\sum\limits^{2}_{j=1}a_{1j}^{\ast\frac{d(d+1)}{2}\frac{d(d+1)}{2}}\end{pmatrix}.\notag
\end{gather}
For $\alpha=1,2,...,\frac{d(d+1)}{2}$, we replace the elements of $\alpha$-th column in the matrix $\mathbb{B}_{0}^{\ast}$ by column vector $\Big(b_{1}^{\ast d+1},...,b_{1}^{\ast\frac{d(d+1)}{2}}\Big)^{T}$, and then generate new matrix $\mathbb{B}_{0}^{\ast\alpha}$ as follows:
\begin{gather*}
\mathbb{B}_{0}^{\ast\alpha}=
\begin{pmatrix}
\sum\limits^{2}_{i=1}a_{i1}^{\ast d+1\,1}&\cdots&b_{1}^{\ast d+1}&\cdots&\sum\limits^{2}_{i=1}a_{i1}^{\ast d+1\,\frac{d(d+1)}{2}} \\\\ \vdots&\ddots&\vdots&\ddots&\vdots\\\\ \sum\limits^{2}_{i=1}a_{i1}^{\ast\frac{d(d+1)}{2}\,1}&\cdots&b_{1}^{\ast\frac{d(d+1)}{2}}&\cdots&\sum\limits^{2}_{i=1}a_{i1}^{\ast\frac{d(d+1)}{2}\frac{d(d+1)}{2}}
\end{pmatrix}.
\end{gather*}
Denote
\begin{align}\label{ZZ002}
\mathbb{F}_{0}^{\ast}=\begin{pmatrix} \mathbb{A}_{0}^{\ast}&\mathbb{B}_{0}^{\ast} \\  \mathbb{C}_{0}^{\ast}&\mathbb{D}^{\ast}
\end{pmatrix},\quad \mathbb{F}_{0}^{\ast\alpha}=\begin{pmatrix} \mathbb{A}_{0}^{\ast}&\mathbb{B}_{0}^{\ast\alpha} \\  \mathbb{C}_{0}^{\ast}&\mathbb{D}^{\ast\alpha}
\end{pmatrix},\quad\alpha=1,2,...,\frac{d(d+1)}{2}.
\end{align}

Third, if $m<d-1$, then by substituting column vector $\Big(b_{1}^{\ast1},...,b_{1}^{\ast\frac{d(d+1)}{2}}\Big)^{T}$ for the elements of $\alpha$-th column of the matrix $\mathbb{B}^{\ast}$, we obtain the new matrix $\mathbb{B}_{1}^{\ast\alpha}$ as follows:
\begin{gather*}
\mathbb{B}_{1}^{\ast\alpha}=
\begin{pmatrix}
\sum\limits^{2}_{i=1}a_{i1}^{\ast11}&\cdots&b_{1}^{\ast1}&\cdots&\sum\limits^{2}_{i=1}a_{i1}^{\ast1\,\frac{d(d+1)}{2}} \\\\ \vdots&\ddots&\vdots&\ddots&\vdots\\\\ \sum\limits^{2}_{i=1}a_{i1}^{\ast\frac{d(d+1)}{2}\,1}&\cdots&b_{1}^{\ast\frac{d(d+1)}{2}}&\cdots&\sum\limits^{2}_{i=1}a_{i1}^{\ast\frac{d(d+1)}{2}\frac{d(d+1)}{2}}
\end{pmatrix}.
\end{gather*}
Define
\begin{align}\label{ZZ003}
\mathbb{F}_{1}^{\ast}=\begin{pmatrix} \mathbb{A}^{\ast}&\mathbb{B}^{\ast} \\  \mathbb{C}^{\ast}&\mathbb{D}^{\ast}
\end{pmatrix},\quad\mathbb{F}^{\ast\alpha}_{1}=\begin{pmatrix} \mathbb{A}^{\ast}&\mathbb{B}_{1}^{\ast\alpha} \\  \mathbb{C}^{\ast}&\mathbb{D}^{\ast\alpha}
\end{pmatrix},\quad\alpha=1,2,...,\frac{d(d+1)}{2}.
\end{align}

Then using the blow-up factor matrices defined in \eqref{WEN002}--\eqref{ZZ001} and \eqref{ZZ002}--\eqref{ZZ003}, we construct the explicit values of $C_{\ast}^{\alpha}$, $\alpha=1,2,...,\frac{d(d+1)}{2}$ as follows:
\begin{align}\label{ZZWWWW}
C_{\ast}^{\alpha}=&
\begin{cases}
\frac{\det\mathbb{D}^{\ast\alpha}}{\det\mathbb{D}^{\ast}},&m\geq d+1,\\
\frac{\det\mathbb{F}_{0}^{\ast\alpha}}{\det \mathbb{F}_{0}^{\ast}},&d-1\leq m<d+1,\\
\frac{\det\mathbb{F}_{1}^{\ast\alpha}}{\det \mathbb{F}_{1}^{\ast}},&m<d-1.
\end{cases}
\end{align}
It is worth emphasizing that the free constants $C_{\ast}^{\alpha}$, $\alpha=1,2,...,\frac{d(d+1)}{2}$ are exactly constructed in \eqref{ZZWWWW}, but not determined by condition \eqref{LDCZ003} as in \cite{L2018,LX2020}. Moreover, by finding the exact values of $C_{\ast}^{\alpha}$, $\alpha=1,2,...,\frac{d(d+1)}{2}$, we get rid of condition \eqref{LDCZ003} in the definition of the stress concentration factors $\mathcal{B}_{\beta}^{\ast}[\varphi]$, $\beta=1,2,...,\frac{d(d+1)}{2}$, and thus give a more clear understanding for these stress concentration factors.

Throughout this paper, let $O(1)$ be some quantity such that $|O(1)|\leq\,C$, where $C$ denotes various positive constants whose value may differ from line to line and depend only on $m,d,R,\tau_{i},\,i=0,1,2,3,4$ and an upper bound of the $C^{2,\alpha}$ norms of $\partial D_{1}$, $\partial D_{2}$ and $\partial D$, but not on $\varepsilon$.
%Observe that by utilizing the standard elliptic theory (see Agmon et al. \cite{ADN1959,ADN1964}), we know
%\begin{align*}
%\|\nabla u\|_{L^{\infty}(\Omega\setminus\Omega_{R})}\leq\,C\|\varphi\|_{C^{2}(\partial D)}.
%\end{align*}

Denote
\begin{align}\label{JTD}
r_{\varepsilon}:=&
\begin{cases}
\varepsilon^{\min\{\frac{1}{4},\frac{m-d-1}{m}\}},&m>d+1,\\
|\ln\varepsilon|^{-1},&m=d+1,\\
\varepsilon^{\min\{\frac{d+1-m}{12m},\frac{m-d+1}{m}\}},&d-1<m<d+1,\\
|\ln\varepsilon|^{-1},&m=d-1,\\
\varepsilon^{\min\{\frac{1}{6},\frac{d-1-m}{12m}\}},&m<d-1.
\end{cases}
\end{align}
Then our main result in this paper is stated as follows.
\begin{theorem}\label{JGR}
Let $D_{1},D_{2}\subset D\subset\mathbb{R}^{d}\,(d\geq2)$ be defined as above, conditions $\mathrm{(}${\bf{H1}}$\mathrm{)}$--$\mathrm{(}${\bf{H3}}$\mathrm{)}$ hold. Then for a sufficiently small $\varepsilon>0$,
\begin{align}\label{AHNZ001}
\mathcal{B}_{\beta}[\varphi]=\mathcal{B}_{\beta}^{\ast}[\varphi]+O(r_{\varepsilon}),\quad\beta=1,2,...,\frac{d(d+1)}{2},
\end{align}
where the blow-up factors $\mathcal{B}_{\beta}[\varphi]$ and $\mathcal{B}_{\beta}^{\ast}[\varphi]$ are, respectively, defined in \eqref{LGBC} and \eqref{KGF001}, the remaining term $r_{\varepsilon}$ is defined by \eqref{JTD}.
\end{theorem}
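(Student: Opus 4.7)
The plan is to reduce the convergence \eqref{AHNZ001} to the quantitative comparison $C_2^\alpha = C_*^\alpha + O(r_\varepsilon)$ between the free constants appearing in the decomposition \eqref{CTL001} of $u_b$ and the explicit values prescribed in \eqref{ZZWWWW} for $u_b^*$. Writing $u_b = \sum_\alpha C_2^\alpha(v_1^\alpha+v_2^\alpha)+v_0$ and $u_b^* = \sum_\alpha C_*^\alpha(v_1^{*\alpha}+v_2^{*\alpha})+v_0^*$, inserting these into the definitions \eqref{LGBC} and \eqref{KGF001} and applying Green's identity \eqref{Le2.01222}, one rewrites both $\mathcal{B}_\beta[\varphi]$ and $\mathcal{B}_\beta^*[\varphi]$ in terms of $C_2^\alpha$ (respectively $C_*^\alpha$) together with the matrix entries $a_{i1}^{\alpha\beta}$ and the quantities $\int_{\partial D_1}\frac{\partial v_0}{\partial\nu_0}|_+\cdot\psi_\beta$ (and analogously with stars). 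The difference $\mathcal{B}_\beta[\varphi]-\mathcal{B}_\beta^*[\varphi]$ then splits into terms of the form $(C_2^\alpha-C_*^\alpha)\cdot(\text{matrix entry})$ plus regular remainders from comparing $v_i^\alpha,v_0$ with $v_i^{*\alpha},v_0^*$ on the bulk region $\Omega\setminus\Omega_R$; standard elliptic estimates together with the exponential decay of $u_b$ away from the thin gap (available from \cite{BLL2015,BLL2017}) bound the regular contribution by $O(r_\varepsilon)$.

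The core technical step is therefore the quantitative bound $|C_2^\alpha-C_*^\alpha|=O(r_\varepsilon)$. I would form the $d(d+1)\times d(d+1)$ block linear system satisfied by $(C_1^\alpha,C_2^\alpha)$ by imposing the fourth line of \eqref{La.002} on the decomposition \eqref{Decom}; its coefficient matrix is the $\varepsilon$-dependent analogue of the block matrix $\mathbb{F}_1^*$, with entries $a_{ij}^{\alpha\beta}$ defined as in \eqref{LGBC} and right-hand side $(b_i^\beta)$ defined analogously from $v_0$. The next step is to compute each entry up to a quantitative error by substituting the explicit ansatz $\bar u_1^\alpha$ of \eqref{zzwz002}--\eqref{QLA001} (and its $D_2$-analogue) for $v_i^\alpha$ on the thin gap $\Omega_R$, and bounding the remainders $w_i^\alpha := v_i^\alpha - \bar u_i^\alpha$ by the Bao--Li--Li iteration scheme on shrinking cylindrical subregions. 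This produces sharp leading orders for the singular entries $a_{11}^{\alpha\beta}$ depending on $m$ and on whether $\alpha,\beta$ index the translation block $\{1,\dots,d\}$ or the rotation block $\{d+1,\dots,\frac{d(d+1)}{2}\}$; meanwhile the cross entries $a_{ij}^{\alpha\beta}$ with $(i,j)\neq(1,1)$ and the right-hand sides $b_i^\beta$ converge to their starred counterparts at rate $O(\varepsilon^{1/2})$ by a direct energy comparison between $\Omega$ and $\Omega^*$.

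I would then solve the system by Cramer's rule, treating separately the three regimes of $m$ distinguished in \eqref{ZZWWWW}. When $m\geq d+1$, every diagonal entry of the singular block is large enough that both translation and rotation modes decouple at leading order, and the Cramer denominator reduces to $\det\mathbb{D}^*$. In the intermediate regime $d-1\leq m<d+1$, the rotational modes of index $\alpha\geq d+1$ remain singular while the translation modes become regular and must be treated as a coupled Schur block, yielding the augmented matrix $\mathbb{F}_0^*$. When $m<d-1$, no such decoupling occurs and the full matrix $\mathbb{F}_1^*$ governs the leading order. In each regime, balancing the singular scale of each cofactor against the remainder bound on the corresponding entry produces exactly the convergence rate $r_\varepsilon$ of \eqref{JTD}.

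The main obstacle will be to establish the uniform (in $\varepsilon$) invertibility of the limiting matrices $\mathbb{D}^*,\mathbb{F}_0^*,\mathbb{F}_1^*$ together with sharp Schur-complement bounds on their inverses, since the final error rate is extracted by multiplying entry-wise remainder estimates by the corresponding cofactor norms. Uniform invertibility should follow from a variational interpretation of $a_{ij}^{*\alpha\beta}$ via \eqref{LMZR} as a Gram matrix of linearly independent functions $v_i^{*\alpha}$, combined with a non-degeneracy argument for the rotational block exploiting the single-touch contact of $D_1^*$ and $D_2$ with convexity order exactly $m$. Once $|C_2^\alpha-C_*^\alpha|=O(r_\varepsilon)$ is in place, inserting this into the splitting derived in the first paragraph yields \eqref{AHNZ001}.
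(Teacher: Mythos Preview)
Your proposal follows essentially the same route as the paper: reduce \eqref{AHNZ001} to the estimate $C_2^\alpha=C_*^\alpha+O(r_\varepsilon)$ (the paper's Lemma~\ref{COOO}), obtain this by Cramer's rule on the block system \eqref{PLA001} after estimating its entries via the auxiliary functions $\bar u_i^\alpha$ and the iteration scheme (Lemmas~\ref{lemmabc} and \ref{KM323}), and verify invertibility of the limiting matrices by the Gram-matrix argument \eqref{AJZ001}. One correction: in the intermediate regime $d-1\le m<d+1$ it is the \emph{translation} block ($\alpha\le d$) whose diagonal entries $a_{11}^{\alpha\alpha}\sim\rho_0(d,m;\varepsilon)$ blow up while the \emph{rotation} block becomes regular, not the reverse---this is why the complementary minor $\mathbb{A}_0^*$ in \eqref{LAGT001} retains only the rotation indices.
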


\begin{remark}
For every $\beta=1,2,...,\frac{d(d+1)}{2}$, the stress concentration factor $\mathcal{B}_{\beta}^{\ast}[\varphi]$ of this paper is more explicit than that of \cite{L2018,LX2020}, since we give the exact values of $C^{\alpha}_{\ast}$, $\alpha=1,2,...,\frac{d(d+1)}{2}$ in the definition of $u_{b}^{\ast}$ in \eqref{LCRD001}. This is critical to the numerical computations and simulations for these stress concentration factors in future work.
%Moreover, by contrast with those complex blow-up factor matrices captured in \cite{ZH2021}, this stress concentration factor seems more concise and natural, which contributes to the significant reduction of computational work in numerical investigations.

\end{remark}

\begin{remark}
For $0<t\leq2R$, let $\Omega_{t}^{\ast}:=\Omega^{\ast}\cap\{|x'|<t\}$. It follows from \eqref{ZZWWWW}, \eqref{LFN001} and \eqref{KTG001} that for $\beta=1,2,...,\frac{d(d+1)}{2}$,
\begin{align*}
&\left|\int_{\partial D_{1}^{\ast}\cap\partial\Omega_{R}^{\ast}}\frac{\partial u_{b}^{\ast}}{\partial\nu_{0}}\Big|_{+}\cdot\psi_{\beta}\right|\notag\\
&=\left|\int_{\partial D_{1}^{\ast}\cap\partial\Omega_{R}^{\ast}}\sum^{\frac{d(d+1)}{2}}_{\alpha=1}C^{\alpha}_{\ast}\frac{\partial (v_{1}^{\ast\alpha}+v_{2}^{\ast\alpha})}{\partial\nu_{0}}\Big|_{+}\cdot\psi_{\beta}+\int_{\partial D_{1}^{\ast}\cap\partial\Omega_{R}^{\ast}}\frac{\partial v_{0}^{\ast}}{\partial\nu_{0}}\Big|_{+}\cdot\psi_{\beta}\right|\notag\\
&\leq C\int^{R}_{0}t^{d-2-\frac{md}{2}}e^{-\frac{1}{2Ct^{m-1}}}dt\leq CR^{d-1-\frac{md}{2}}e^{-\frac{1}{2CR^{m-1}}}.
\end{align*}
This implies that
\begin{align}\label{TGY}
\mathcal{B}_{\beta}^{\ast}[\varphi]=\mathcal{B}_{\beta}^{\ast}[\varphi]|_{\partial D_{1}^{\ast}\setminus\partial\Omega^{\ast}_{R}}+O(1)R^{d-1-\frac{md}{2}}e^{-\frac{1}{2CR^{m-1}}},
\end{align}
where $\mathcal{B}_{\beta}^{\ast}[\varphi]|_{\partial D_{1}^{\ast}\setminus\partial\Omega^{\ast}_{R}}:=\int_{\partial D_{1}^{\ast}\setminus\partial\Omega_{R}^{\ast}}\frac{\partial u_{b}^{\ast}}{\partial\nu_{0}}|_{+}\cdot\psi_{\beta}.$ From \eqref{TGY}, we conclude that owing to the exponentially decaying property of the gradient $\nabla u_{b}^{\ast}$ in the thin gap, it suffices to utilize regular meshes to numerically calculate the blow-up factor $\mathcal{B}_{\beta}^{\ast}[\varphi]$ outside the thin gap between these two inclusions. This computation method was previously expounded in \cite{KLY2015} for a similar stress concentration factor in the context of the perfect conductivity problem.

\end{remark}

The rest of the paper is organized as follows. Section \ref{SEC005} is devoted to the proof of Theorem \ref{JGR}. In Section \ref{KGRA90}, we apply the stress concentration factors captured in Theorem \ref{JGR} to establish the optimal gradient estimates and the asymptotic expansions in Theorems \ref{MGA001} and \ref{coro00389}, respectively. The corresponding results for the scalar perfect conductivity equation are presented in Section \ref{SECCC000}.

\section{Proof of Theorem \ref{JGR}}\label{SEC005}

For $\alpha=1,2,...,\frac{d(d+1)}{2}$, denote
\begin{align}\label{GAKL001}
\bar{u}_{2}^{\alpha}=&\psi_{\alpha}(1-\bar{v})-\mathcal{F}_{\alpha},
\end{align}
where $\mathcal{F}_{\alpha}$, $\alpha=1,2,...,\frac{d(d+1)}{2}$ are defined in \eqref{QLA001}. We first use the iterate technique built in \cite{LLBY2014} to demonstrate that for $i=1,2,\,\alpha=1,2,...,\frac{d(d+1)}{2}$, $\nabla\bar{u}_{i}^{\alpha}$ is the main term of $\nabla v_{i}^{\alpha}$.
\begin{prop}\label{thm86}
Assume as above. For $i=1,2,\,\alpha=1,2,...,\frac{d(d+1)}{2}$, let $v_{i}^{\alpha}\in H^{1}(\Omega;\mathbb{R}^{d})$ be a weak solution of \eqref{qaz001}. Then, for a sufficiently small $\varepsilon>0$ and $x\in\Omega_{R}$,
\begin{align}\label{Le2.025}
\nabla v_{i}^{\alpha}=\nabla\bar{u}_{i}^{\alpha}+O(1)
\begin{cases}
\delta^{\frac{m-2}{m}},&\alpha=1,2,...,d,\\
1,&\alpha=d+1,...,\frac{d(d+1)}{2},
\end{cases}
\end{align}
where $\delta$ is defined in \eqref{deta}, the main terms $\bar{u}_{i}^{\alpha}$, $i=1,2,\,\alpha=1,2,...,\frac{d(d+1)}{2}$ are defined in \eqref{zzwz002} and \eqref{GAKL001}, respectively.
\end{prop}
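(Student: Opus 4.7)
The plan is to set $w_i^\alpha := v_i^\alpha - \bar u_i^\alpha$ in $\Omega_{2R}$ and to prove that $\nabla w_i^\alpha$ is of strictly smaller order than $\nabla \bar u_i^\alpha$. By the explicit formula \eqref{zh001} for $\bar v$ and the vanishing of $f(\bar v) = \tfrac12(\bar v - \tfrac12)^2 - \tfrac18$ at $\bar v \in \{0,1\}$, the correction $\mathcal F_\alpha$ in \eqref{QLA001} dies on $\partial D_1\cup\partial D_2$. Consequently $w_i^\alpha$ carries homogeneous Dirichlet data on both inclusion boundaries, which reduces the proposition to an interior estimate for $w_i^\alpha$ driven by $\mathcal L_{\lambda,\mu}\bar u_i^\alpha$. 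Away from $\Omega_R$ the estimate is immediate from the second bound in \eqref{zh001} and standard $C^{1,\gamma}$ regularity, so all work is concentrated in the thin gap.

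The first calculation is to expand $\mathcal L_{\lambda,\mu}\bar u_i^\alpha = \mu\Delta \bar u_i^\alpha + (\lambda+\mu)\nabla(\nabla\cdot \bar u_i^\alpha)$. Using $\partial_{x_d}\bar v = 1/\delta$ and $\partial_{x_i}\bar v = -(\partial_{x_i}h_2 + \bar v\,\partial_{x_i}\delta)/\delta$, the most singular contributions from $\mathcal L_{\lambda,\mu}(\psi_\alpha\bar v)$ are proportional to $(\lambda+\mu)\partial_{x_i}\delta/\delta^2$ and sit in the $e_d$- and $e_i$-components of $\nabla(\nabla\cdot(\psi_\alpha\bar v))$. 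The choice of prefactors $(\lambda+\mu)/\mu$ and $(\lambda+\mu)/(\lambda+2\mu)$ together with the quadratic $f(\bar v)$ is engineered so that $\mu\partial_{x_d}^2\mathcal F_\alpha$ and $(\lambda+\mu)\partial_{x_d}\partial_{x_i}\mathcal F_\alpha$ cancel exactly these leading pieces. After bookkeeping I expect the residual bound
\begin{align*}
|\mathcal L_{\lambda,\mu}\bar u_i^\alpha(x)| \le C\begin{cases} |x'|^{m-2}/\delta(x'), & \alpha = 1,\dots,d,\\ 1/\delta(x'), & \alpha = d+1,\dots,\tfrac{d(d+1)}{2}, \end{cases}
\end{align*}
where the extra factor $|x'|^{m-2}\lesssim \delta^{(m-2)/m}$ for translations is the algebraic source of the improved rate in \eqref{Le2.025}; for rotations, the factor $|\psi_\alpha|\lesssim|x'|\lesssim \delta^{1/m}$ already sits inside $\bar u_i^\alpha$.

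The second step is the iteration technique of \cite{LLBY2014,BLL2015}. For $z'\in B_R'$ write $\delta_z := \delta(z')$. Testing the equation for $w_i^\alpha$ against a cutoff multiple of itself yields the Caccioppoli-type inequality
\begin{align*}
\int_{\Omega_s(z')}|\nabla w_i^\alpha|^2\,dx \le \frac{C}{(t-s)^2}\int_{\Omega_t(z')}|w_i^\alpha|^2\,dx + C\int_{\Omega_t(z')}|\mathcal L_{\lambda,\mu}\bar u_i^\alpha|\,|w_i^\alpha|\,dx,
\end{align*}
valid for $\delta_z^{1/m}< s < t$. Combined with the thin-domain Poincaré inequality $\|w_i^\alpha\|_{L^2(\Omega_t(z'))}\le C\delta\|\partial_{x_d}w_i^\alpha\|_{L^2(\Omega_t(z'))}$ (which uses the zero Dirichlet data on top and bottom) and iterated over geometrically chosen radii $s_k$, the usual telescoping argument yields
\begin{align*}
\int_{\Omega_{\delta_z^{1/m}}(z')}|\nabla w_i^\alpha|^2\,dx \le C\delta_z^{d-1}\cdot\begin{cases}\delta_z^{2(m-2)/m}, & \alpha\le d,\\ 1, & \alpha> d.\end{cases}
\end{align*}

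The final step is the rescaling $\tilde w(y):=w_i^\alpha(z'+\delta_z^{1/m}y',\delta_z y_d)$, which blows $\Omega_{\delta_z^{1/m}}(z')$ up to a region of unit diameter and unit height on which the rescaled Lamé operator still satisfies \eqref{JFAMC001}. Standard interior $C^{1,\gamma}$ estimates applied to $\tilde w$, followed by scaling back, then upgrade the preceding $L^2$ bound on $\nabla w_i^\alpha$ to the pointwise estimate \eqref{Le2.025}. The principal obstacle is the term-by-term verification of the cancellation produced by $\mathcal F_\alpha$ in the residual estimate: in the rotational case $\alpha>d$ the $x$-dependence of $\psi_\alpha$ produces additional cross-terms from $\nabla\psi_\alpha\otimes\nabla\bar v$ and from $\nabla(\psi_\alpha^d f(\bar v)\partial_{x_i}\delta)$, and one must verify that each of these either cancels against another contribution or is majorized by $\delta(x')^{-1}$ and hence does not spoil the claimed rate.
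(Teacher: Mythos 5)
Your overall plan (introduce $w_i^\alpha := v_i^\alpha - \bar u_i^\alpha$, verify homogeneous Dirichlet data on $\Gamma^\pm$, estimate $\mathcal L_{\lambda,\mu}\bar u_i^\alpha$, run a Caccioppoli-type iteration using the thin-domain Poincar\'e inequality, then rescale and apply elliptic regularity) is the same as the paper's three-step proof, and your discussion of the cancellations produced by $\mathcal F_\alpha$ correctly identifies the role of the prefactors $(\lambda+\mu)/\mu$ and $(\lambda+\mu)/(\lambda+2\mu)$. However, Steps~2 and~3 as written do not close.

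First, the iteration must terminate at the \emph{small} scale $\delta_z$, not the large scale $\delta_z^{1/m}$, and the bound you record is too weak. The paper's Step~2 (eq.\ \eqref{HN001}) gives $\int_{\Omega_{\delta_z}(z')}|\nabla w_\alpha|^2\lesssim \delta_z^{d}\bigl(|\psi_\alpha|^2\delta_z^{2-4/m}+|\nabla_{x'}\psi_\alpha|^2+\delta_z^2\bigr)$; for $\alpha\le d$ this is $\delta_z^{d+2-4/m}$, and dividing by the volume $|\Omega_{\delta_z}(z')|\sim\delta_z^d$ yields the average $\delta_z^{2(m-2)/m}$, exactly matching the pointwise target. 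Your bound $\delta_z^{d-1}\cdot\delta_z^{2(m-2)/m}=\delta_z^{d+1-4/m}$ is weaker by a full factor of $\delta_z$, even as a bound over the smaller set $\Omega_{\delta_z}(z')$; the isotropic rescaling plus local $W^{2,p}$ regularity would then only give $|\nabla w_\alpha|\lesssim\delta_z^{(m-4)/(2m)}$, which for $2\le m<4$ is divergent rather than decaying. This factor of $\delta_z$ is precisely the height of the cube; it appears in the Caccioppoli inequality through the radii $t_i=\delta+2ci\delta$ spaced in steps of size $\delta$ (not $\delta^{1/m}$), which is also why the iteration has $O(\delta^{1/m-1})$ steps.

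Second, and more seriously, the anisotropic rescaling $y'=(x'-z')/\delta_z^{1/m}$, $y_d=x_d/\delta_z$ does \emph{not} leave the Lam\'e operator uniformly elliptic: horizontal and vertical second derivatives pick up the incomparable factors $\delta_z^{-2/m}$ and $\delta_z^{-2}$, so the rescaled operator degenerates as $\delta_z\to0$ and ``standard interior $C^{1,\gamma}$ estimates'' cannot be invoked with constants independent of $\varepsilon$. This is exactly what the paper's \emph{isotropic} rescaling $y'=(x'-z')/\delta_z$, $y_d=x_d/\delta_z$ avoids: it maps the small cube $\Omega_{\delta_z}(z')$---which has width $\delta_z$ and height $\approx\delta_z$ by \eqref{QWN001}---onto a domain $Q_1$ of unit size while preserving the Lam\'e structure and ellipticity constants, after which the classical $W^{2,p}$/Sobolev chain applies uniformly. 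You would need to replace your anisotropic rescaling by this isotropic one (and correspondingly run the iteration down to the scale $\delta_z$) before the argument closes.

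Two smaller remarks: your residual estimate for $\mathcal L_{\lambda,\mu}\bar u_i^\alpha$ should include an $O(1)$ term (as in \eqref{DM001}), since near $x'=0$ the gradients of $h_1, h_2$ vanish and the remaining $\mathcal F_\alpha$-contributions are bounded rather than decaying like $|x'|^{m-2}/\delta$; and the iteration requires as its base case the global energy bound $\int_{\Omega_R}|\nabla w_\alpha|^2\le C$, which in the paper's Step~1 is obtained by multiplying by $w_\alpha$, integrating by parts, using the one-dimensional Poincar\'e across the gap, and controlling the lateral boundary term $\int_{|x'|=R}$ via interior $W^{2,p}$ estimates; this deserves an explicit sentence in your write-up.
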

\begin{proof}
Take the case of $i=1$ for example. The case of $i=2$ is the same and thus omitted. To begin with, it follows from a straightforward computation that for $\alpha=1,2,...,\frac{d(d+1)}{2}$,
\begin{align}\label{DM001}
|\mathcal{L}_{\lambda,\mu}\bar{u}_{1}^{\alpha}|\leq&C\left(|\psi_{\alpha}|\delta^{-2/m}+|\nabla_{x'}\psi_{\alpha}|\delta^{-1}+1\right),\quad \mathrm{in}\;\Omega_{2R}.
\end{align}

For $x\in\Omega_{2R}$, denote
\begin{equation*}
w_{\alpha}:=v_{1}^{\alpha}-\bar{u}_{1}^{\alpha},\quad \alpha=1,2,...,\frac{d(d+1)}{2}.
\end{equation*}
Then $w_{\alpha}$ satisfies
\begin{align}\label{RNZ}
\begin{cases}
\mathcal{L}_{\lambda,\mu}w_{\alpha}=-\mathcal{L}_{\lambda,\mu}\bar{u}_{1}^{\alpha},&
\hbox{in}\  \Omega_{2R},  \\
w_{\alpha}=0, \quad&\hbox{on} \ \Gamma^{\pm}_{2R}.
\end{cases}
\end{align}

\noindent{\bf Step 1.}
Let $w_{\alpha}\in H^1(\Omega;\mathbb{R}^{d})$ be a weak solution of \eqref{RNZ}. Then
\begin{align}\label{YGZA001}
\int_{\Omega_{R}}|\nabla w_{\alpha}|^2dx\leq C,\quad \alpha=1,2,...,\frac{d(d+1)}{2}.
\end{align}

Multiplying equation \eqref{RNZ} by $w_{\alpha}$ and integrating by parts, we get
\begin{align}\label{LNTZ0010}
&\int_{\Omega_{R}}\left(\mathbb{C}^0e(w_{\alpha}),e(w_{\alpha})\right)dx\notag\\
&=\int_{\Omega_{R}}(\mathcal{L}_{\lambda,\mu}\bar{u}_{1}^{\alpha})w_{\alpha}dx+\int\limits_{\scriptstyle |x'|={R},\atop\scriptstyle
h_{2}(x')<x_{d}<\varepsilon+h_1(x')\hfill}\frac{\partial w_{\alpha}}{\partial\nu_{0}}\Big|_{+}\cdot w_{\alpha}.
\end{align}

On one hand, from \eqref{JFAMC001} and \eqref{ellip}, we deduce
\begin{align}\label{equ1}
\tau_{0}\int_{\Omega_{R}}|\nabla w_{\alpha}|^2dx
\leq\int_{\Omega_{R}}\left(\mathbb{C}^0e(w_{\alpha}),e(w_{\alpha})\right)dx.
\end{align}

On the other hand, we have
\begin{align}\label{MFD}
&\int_{\Omega_{R}}(\mathcal{L}_{\lambda,\mu}\bar{u}_{1}^{\alpha})w_{\alpha}dx+\int\limits_{\scriptstyle |x'|={R},\atop\scriptstyle
h_{2}(x')<x_{d}<\varepsilon+h_1(x')\hfill}\frac{\partial w_{\alpha}}{\partial\nu_{0}}\Big|_{+}\cdot w_{\alpha}\notag\\
&\leq C\int_{|x'|<R}\delta^{-1}(x')dx'\int_{h_{2}(x')}^{\varepsilon+h_{1}(x')}\left|\int^{x_{d}}_{h_{2}(x')}\partial_{x_{d}}w_{\alpha}(x',t)dt\right|dx_{d}\notag\\
&\quad+\int\limits_{\scriptstyle |x'|=R,\atop\scriptstyle
h_{2}(x')<x_{d}<\varepsilon+h_1(x')\hfill}C\left(|\nabla w_{\alpha}|^2+|w_{\alpha}|^2\right)\notag\\
&\leq C\int_{\Omega_{R}}|\nabla w_{\alpha}|dx+\int\limits_{\scriptstyle |x'|=R,\atop\scriptstyle
h_{2}(x')<x_{d}<\varepsilon+h_1(x')\hfill}C\left(|\nabla w_{\alpha}|^2+|w_{\alpha}|^2\right)\notag\\
&\leq\frac{\tau_{0}}{2}\|\nabla w_{\alpha}\|_{L^{2}(\Omega_{R})}^{2}+\int\limits_{\scriptstyle |x'|=R,\atop\scriptstyle
h_{2}(x')<x_{d}<\varepsilon+h_1(x')\hfill}C\left(|\nabla w_{\alpha}|^2+|w_{\alpha}|^2\right)+C,
\end{align}
where in the last line we used the Cauchy inequality. Due to the fact that $w_{\alpha}=0$ on $\Gamma_{2R}^{\pm}$ and $\overline{\Omega_{4/3R}}\setminus \Omega_{2/3R}\subset\left((\Omega_{2R}\setminus\overline{\Omega_{1/2R}})\cup (\Gamma_{2R}^{\pm}\setminus \Gamma_{1/2R}^{\pm})\right)$,
it follows from the standard elliptic estimates, the Sobolev embedding theorem and classical $W^{2, p}$ estimates for elliptic systems that for some $p>n$,
\begin{align*}
\|\nabla w_{\alpha}\|_{L^\infty(\Omega_{4/3R}\setminus \overline{\Omega_{2/3R}})}&\leq C\|w_{\alpha}\|_{W^{2,p}(\Omega_{4/3R}\setminus \overline{\Omega_{2/3R})}}\\
&\leq C\big(\|w_{\alpha}\|_{L^2(\Omega_{2R}\setminus \overline{\Omega_{1/2R}})}+\|\mathcal{L}_{\lambda,\mu}\bar{u}_{1}^{\alpha}\|_{L^\infty(\Omega_{2R}\setminus \overline{\Omega_{1/2R}})}\big)\nonumber\\
&\leq C\big(\|v_{1}^{\alpha}\|_{L^2(\Omega_{2R}\setminus \overline{\Omega_{1/2R}})}+1\big)\leq C,
\end{align*}
and for $x=(x',x_d)\in\Omega_{4/3R}\setminus \overline{\Omega_{2/3R}}$,
\begin{align*}
|w_{\alpha}(x',x_d)|&=|w_{\alpha}(x',x_d)-w_{\alpha}(x',h(x'))|\leq C\delta(x')\|\nabla w_{\alpha}\|_{L^\infty(\Omega_{4/3R}\setminus  \overline{\Omega_{2/3R}})}\leq C.
\end{align*}
Then, we obtain
\begin{align}\label{KLN0000}
&\int\limits_{\scriptstyle |x'|={R},\atop\scriptstyle
h_{2}(x')<x_{d}<\varepsilon+h_1(x')\hfill}\big(|w_{\alpha}|^{2}+|\nabla w_{\alpha}|^{2}\big)\leq C.
\end{align}

Therefore, substituting \eqref{equ1}--\eqref{KLN0000} into \eqref{LNTZ0010}, we obtain that \eqref{YGZA001} holds.

\noindent{\bf Step 2.}
Claim that for $\alpha=1,2,...,\frac{d(d+1)}{2}$,
\begin{align}\label{HN001}
 \int_{\Omega_\delta(z')}|\nabla w_{\alpha}|^2dx\leq& C\delta^{d}(|\psi_{\alpha}|^{2}\delta^{2-4/m}+|\nabla_{x'}\psi_{\alpha}|+\delta^{2}).
\end{align}
For $|z'|\leq R$, $\delta\leq t<s\leq\vartheta(\tau_{1},\tau_{3})\delta^{1/m}$, $\vartheta(\tau_{1},\tau_{3})=\frac{1}{2^{m+2}\tau_{3}\max\{1,\tau_{1}^{1/m-1}\}}$, it follows from conditions ({\bf{H1}}) and ({\bf{H2}}) that for  $(x',x_{d})\in\Omega_{s}(z')$,
\begin{align}\label{KHW01}
|\delta(x')-\delta(z')|\leq&|h_{1}(x')-h_{1}(z')|+|h_{2}(x')-h_{2}(z')|\notag\\
\leq&(|\nabla_{x'}h_{1}(x'_{\theta_{1}})|+|\nabla_{x'}h_{2}(x'_{\theta_{2}})|)|x'-z'|\notag\\
\leq&\tau_{3}s(|x'_{\theta_{1}}|^{m-1}+|x'_{\theta_{2}}|^{m-1})\notag\\
\leq&2^{m-1}\tau_{3}s(s^{m-1}+|z'|^{m-1})\notag\\
\leq&\frac{\delta(z')}{2}.
\end{align}
Then, we have
\begin{align}\label{QWN001}
\frac{1}{2}\delta(z')\leq\delta(x')\leq\frac{3}{2}\delta(z'),\quad\mathrm{in}\;\Omega_{s}(z').
\end{align}
Let $\eta\in C^{2}(\Omega_{2R})$ be a smooth cutoff function such that $\eta(x')=1$ if $|x'-z'|<t$, $\eta(x')=0$ if $|x'-z'|>s$, $0\leq\eta(x')\leq1$ if $t\leq|x'-z'|\leq s$, and $|\nabla_{x'}\eta(x')|\leq\frac{2}{s-t}$. Then multiplying equation \eqref{RNZ} by $w_{\alpha}\eta^{2}$ and using integration by parts, we obtain
\begin{align}\label{HN002}
\int_{\Omega_{s}(z')}\left(\mathbb{C}^0e(w_{\alpha}),e(w_{\alpha}\eta^{2})\right)dx=\int_{\Omega_{s}(z')}w_{\alpha}\eta^{2}\left(\mathcal{L}_{\lambda,\mu}\bar{u}_{1}^{\alpha}\right)dx.
\end{align}
On one hand, it follows from \eqref{JFAMC001}, \eqref{ellip} and the first Korn's inequality that
\begin{align}\label{HN003}
\int_{\Omega_{s}(z')}\left(\mathbb{C}^0e(w_{\alpha}),e(w_{\alpha}\eta^{2})\right)dx\geq\frac{1}{C}\int_{\Omega_{s}(z')}|\eta\nabla w_{\alpha}|^{2}-C\int_{\Omega_{s}(z')}|\nabla\eta|^{2}|w_{\alpha}|^{2}.
\end{align}
On the other hand, from the H\"{o}lder inequality and the Cauchy inequality, we have
\begin{align}\label{KDZ001}
&\left|\int_{\Omega_{s}(z')}w_{\alpha}\eta^{2}\left(\mathcal{L}_{\lambda,\mu}\bar{u}_{1}^{\alpha}\right)dx\right|\notag\\
&\leq \frac{C}{(s-t)^{2}}\int_{\Omega_{s}(z')}|w_{\alpha}|^{2}dx+C(s-t)^{2}\int_{\Omega_{s}(z')}|\mathcal{L}_{\lambda,\mu}\bar{u}_{1}^{\alpha}|^{2}dx.
\end{align}
Due to the fact that $w_{\alpha}=0$ on $\Gamma^{-}_{R}$, it follows from \eqref{QWN001} that
\begin{align}\label{RTYII001}
\int_{\Omega_{s}(z')}|w_{\alpha}|^{2}\leq C\delta^{2}\int_{\Omega_{s}(z')}|\nabla w_{\alpha}|^{2}.
\end{align}
From \eqref{DM001} and \eqref{QWN001}, we have
\begin{align}\label{RTYII002}
\int_{\Omega_{s}(z')}|\mathcal{L}_{\lambda,\mu}\bar{u}_{1}^{\alpha}|^{2}\leq&C\delta^{-1}s^{d-1}(|\psi_{\alpha}|^{2}\delta^{2-4/m}+|\nabla_{x'}\psi_{\alpha}|^{2}+\delta^{2}).
\end{align}
Then substituting \eqref{HN003}--\eqref{RTYII002} into \eqref{HN002}, we obtain the iteration formula as follows:
\begin{align}\label{GM001}
\int_{\Omega_{t}(z')}|\nabla w_{\alpha}|^{2}dx\leq&\left(\frac{c\delta}{s-t}\right)^{2}\int_{\Omega_{s}(z')}|\nabla w_{\alpha}|^{2}dx\notag\\
&+C\delta^{-1}s^{d-1}(s-t)^{2}(|\psi_{\alpha}|^{2}\delta^{2-4/m}+|\nabla_{x'}\psi_{\alpha}|^{2}+\delta^{2}).
\end{align}

Write
$$F(t):=\int_{\Omega_{t}(z')}|\nabla w_{\alpha}|^{2}.$$
Let $s=t_{i+1}$, $t=t_{i}$, $t_{i}=\delta+2ci\delta,\;i=0,1,2,...,\left[\frac{\vartheta(\tau_{1},\tau_{3})}{4c\delta^{1-1/m}}\right]+1$. Then \eqref{GM001} becomes
\begin{align*}
F(t_{i})\leq&\frac{1}{4}F(t_{i+1})+C(i+1)^{d-1}\delta^{d}(|\psi_{\alpha}|^{2}\delta^{2-4/m}+|\nabla_{x'}\psi_{\alpha}|^{2}+\delta^{2}).
\end{align*}
Hence, after $\left[\frac{\vartheta(\tau_{1},\tau_{3})}{4c\delta^{1-1/m}}\right]+1$ iterations, it follows from \eqref{YGZA001} that for a sufficiently small $\varepsilon>0$,
\begin{align*}
F(t_{0})\leq \delta^{d}(|\psi_{\alpha}|^{2}\delta^{2-4/m}+|\nabla_{x'}\psi_{\alpha}|^{2}+\delta^{2}).
\end{align*}
That is, \eqref{HN001} holds.

\noindent{\bf Step 3.}
Claim that for $\alpha=1,2,...,\frac{d(d+1)}{2}$,
\begin{align}\label{AQ3.052}
|\nabla w_{\alpha}|\leq&C(|\psi_{\alpha}|\delta^{1-2/m}+|\nabla_{x'}\psi_{\alpha}|+\delta),\quad\mathrm{in}\;\Omega_{R}.
\end{align}

First, by performing a change of variables as follows:
\begin{align*}
\begin{cases}
x'-z'=\delta y',\\
x_{d}=\delta y_{d}.
\end{cases},\quad (x',x_{d})\in\Omega_{\delta}(z'),
\end{align*}
we rescale $\Omega_{\delta}(z')$ into $Q_{1}$, where, for $0<r\leq 1$,
\begin{align*}
Q_{r}=\left\{y\in\mathbb{R}^{d}\,\Big|\,\frac{1}{\delta}h_{2}(\delta y'+z')<y_{d}<\frac{\varepsilon}{\delta}+\frac{1}{\delta}h_{1}(\delta y'+z'),\;|y'|<r\right\}.
\end{align*}
Denote by
\begin{align*}
\widehat{\Gamma}^{+}_{r}=&\left\{y\in\mathbb{R}^{d}\,\Big|\,y_{d}=\frac{\varepsilon}{\delta}+\frac{1}{\delta}h_{1}(\delta y'+z'),\;|y'|<r\right\}
\end{align*}
and
\begin{align*}
\widehat{\Gamma}^{-}_{r}=&\left\{y\in\mathbb{R}^{d}\,\Big|\,y_{d}=\frac{1}{\delta}h_{2}(\delta y'+z'),\;|y'|<r\right\}
\end{align*}
the top and bottom boundaries of $Q_{r}$, respectively. Analogous to \eqref{KHW01}, we get that for $x\in\Omega_{\delta}(z')$,
\begin{align*}
|\delta(x')-\delta(z')|
\leq&2^{m-1}\tau_{3}\delta(\delta^{m-1}+|z'|^{m-1})\leq2^{m+1}\tau_{3}\max\{1,\tau_{1}^{1/m-1}\}\delta^{2-1/m},
\end{align*}
which yields that
\begin{align*}
\left|\frac{\delta(x')}{\delta(z')}-1\right|\leq2^{m+2}\max\{\tau_{1}^{1-1/m},1\}\tau_{3}R^{m-1}.
\end{align*}
Then $Q_{1}$ is of nearly unit size as far as applications of Sobolev embedding theorems and classical $L^{p}$ estimates for elliptic systems are concerned, since $R$ is a small positive constant.

Let
\begin{align*}
W_{\alpha}(y',y_d):=w_{\alpha}(\delta y'+z',\delta y_d),\quad \bar{U}_{1}^{\alpha}(y',y_d):=\bar{u}_{1}^{\alpha}(\delta y'+z',\delta y_d).
\end{align*}
Then $W_{\alpha}(y)$ satisfies
\begin{align*}
\begin{cases}
\mathcal{L}_{\lambda,\mu}W_{\alpha}=-\mathcal{L}_{\lambda,\mu}\bar{U}_{1}^{\alpha},&
\hbox{in}\  Q_{1},  \\
W_{\alpha}=0, \quad&\hbox{on} \ \widehat{\Gamma}^{\pm}_{1}.
\end{cases}
\end{align*}
Then combining the Poincar\'{e} inequality, the Sobolev embedding theorem and classical $W^{2, p}$ estimates for elliptic systems, we obtain that for some $p>d$,
\begin{align*}
\|\nabla W_{\alpha}\|_{L^{\infty}(Q_{1/2})}\leq C\|W_{\alpha}\|_{W^{2,p}(Q_{1/2})}\leq C(\|\nabla W_{\alpha}\|_{L^{2}(Q_{1})}+\|\mathcal{L}_{\lambda,\mu}\bar{U}_{1}^{\alpha}\|_{L^{\infty}(Q_{1})}),
\end{align*}
which reads that
\begin{align}\label{ADQ601}
\|\nabla w_{\alpha}\|_{L^{\infty}(\Omega_{\delta/2}(z'))}\leq\frac{C}{\delta}\left(\delta^{1-\frac{d}{2}}\|\nabla w_{\alpha}\|_{L^{2}(\Omega_{\delta}(z'))}+\delta^{2}\|\mathcal{L}_{\lambda,\mu}\bar{u}_{1}^{\alpha}\|_{L^{\infty}(\Omega_{\delta}(z'))}\right).
\end{align}
Therefore, substituting \eqref{DM001} and \eqref{HN001} into \eqref{ADQ601}, we deduce that \eqref{AQ3.052} holds. That is, Proposition \ref{thm86} is established.

\end{proof}

Applying the proof of Proposition \ref{thm86} with minor modification, we have
\begin{corollary}\label{coro00z}
Assume as above. Let $v_{0},v_{i}^{\alpha}$ and $v_{0}^{\ast},v_{i}^{\ast\alpha}$, $i=1,2$, $\alpha=1,2,...,\frac{d(d+1)}{2}$ be the solutions of \eqref{qaz001} and \eqref{ZG001}--\eqref{qaz001111}, respectively. Then for a sufficiently small $\varepsilon>0$,
\begin{align*}
|\nabla v_{0}|+\left|\sum^{2}_{i=1}\nabla v_{i}^{\alpha}\right|\leq C\delta^{-\frac{d}{2}}e^{-\frac{1}{2C\delta^{1-1/m}}},\;\;\mathrm{in}\;\Omega_{R},
\end{align*}
and
\begin{align}\label{LFN001}
|\nabla v_{0}^{\ast}|+\left|\sum^{2}_{i=1}\nabla v_{i}^{\ast\alpha}\right|\leq C|x'|^{-\frac{md}{2}}e^{-\frac{1}{2C|x'|^{m-1}}},\;\;\mathrm{in}\;\Omega_{R}^{\ast}.
\end{align}
\end{corollary}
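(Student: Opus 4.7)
The plan is to revisit the three-step iteration already carried out in the proof of Proposition \ref{thm86} and exploit the fact that, unlike $v_i^\alpha$, the functions of interest here satisfy the homogeneous Lam\'e system with vanishing Dirichlet data on both $\Gamma_R^+$ and $\Gamma_R^-$. Consequently the inhomogeneous right-hand side driving the polynomial corrections in Proposition \ref{thm86} disappears, and the Caccioppoli-type iteration upgrades to a purely geometric one, producing the claimed exponential decay.

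The key reduction is as follows. For $v_0$, the homogeneous Dirichlet condition on $\Gamma_R^\pm$ is immediate from \eqref{qaz001}. For the sum, set $w^\alpha := v_1^\alpha + v_2^\alpha - \psi_\alpha$. On $\partial D_1\cap\partial\Omega_R$ one has $v_1^\alpha=\psi_\alpha$ and $v_2^\alpha=0$, while on $\partial D_2\cap\partial\Omega_R$ the roles are swapped; hence $w^\alpha=0$ on $\Gamma_R^\pm$. Moreover $\mathcal{L}_{\lambda,\mu} w^\alpha=-\mathcal{L}_{\lambda,\mu}\psi_\alpha=0$, since $e(\psi_\alpha)\equiv 0$ for rigid displacements. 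Since $\tfrac{\partial \psi_\alpha}{\partial \nu_0}=0$ as well, controlling $\nabla w^\alpha$ is equivalent to controlling $\nabla(v_1^\alpha+v_2^\alpha)$ for all the intended applications.

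Let $w$ stand for either $v_0$ or $w^\alpha$ and set $F(t):=\int_{\Omega_t(z')}|\nabla w|^2$. A global energy argument identical to Step~1 of Proposition \ref{thm86} gives $F(R)\le C$. Testing the equation $\mathcal{L}_{\lambda,\mu} w=0$ against $w\eta^2$ with the same cutoff $\eta$ as in Step~2 of that proof, and using Korn's inequality together with $w=0$ on $\Gamma_R^\pm$, yields the residual-free Caccioppoli bound
\begin{align*}
F(t) \;\le\; \Bigl(\frac{c\,\delta(z')}{s-t}\Bigr)^{2} F(s),\qquad \delta(z')\le t<s\le \vartheta\,\delta(z')^{1/m}.
\end{align*}
Iterating this with $t_i=\delta+2ci\delta$ for $i=0,1,\ldots,N$ and $N\sim \delta^{-(1-1/m)}$ collapses the estimate to $F(\delta)\le 4^{-N}F(t_N)\le C\,e^{-1/(C\delta^{1-1/m})}$.

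Finally, rescaling $\Omega_\delta(z')$ to unit size and applying the interior $W^{2,p}$ estimate plus Sobolev embedding (as in \eqref{ADQ601}, now with vanishing right-hand side) produces
\begin{align*}
\|\nabla w\|_{L^\infty(\Omega_{\delta/2}(z'))} \;\le\; C\delta^{-d/2}\|\nabla w\|_{L^2(\Omega_\delta(z'))} \;\le\; C\delta^{-d/2}e^{-\frac{1}{2C\delta^{1-1/m}}},
\end{align*}
which is the first claimed bound. The starred version \eqref{LFN001} is the same argument in $\Omega^\ast$: by \textbf{(H1)} the gap function $h_1(x')-h_2(x')$ is comparable to $|x'|^m$, so $\delta$ is replaced throughout by $|x'|^m$ and the exponent becomes $e^{-1/(2C|x'|^{m-1})}$. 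The main point requiring care is the bookkeeping of the constant $c$ in the iteration, so that $N$ genuinely exceeds a fixed multiple of $\delta^{-(1-1/m)}$; this is the same computation already carried out in Step~2 of Proposition \ref{thm86}, and it is the only quantitative obstacle in the argument.
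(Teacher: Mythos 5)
Your proof is correct and is exactly the argument the paper intends when it says the corollary follows from Proposition \ref{thm86} ``with minor modification'': both $v_0$ and $w^\alpha := v_1^\alpha+v_2^\alpha-\psi_\alpha$ solve the homogeneous Lam\'e system in $\Omega_{2R}$ with vanishing Dirichlet data on $\Gamma_{2R}^\pm$, so the inhomogeneous term in the Caccioppoli inequality \eqref{GM001} disappears, and iterating $F(t_i)\le\tfrac14 F(t_{i+1})$ over $N\sim\delta^{-(1-1/m)}$ steps, followed by the rescaling/$W^{2,p}$ step as in \eqref{ADQ601} with zero right-hand side, produces the exponential bound. The reduction to vanishing boundary data via $w^\alpha=v_1^\alpha+v_2^\alpha-\psi_\alpha$, and the passage to the starred version by replacing $\delta$ with $h_1-h_2\sim|x'|^m$, are both right.

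One refinement you should keep in mind. For $\alpha=d+1,\dots,\tfrac{d(d+1)}{2}$ the rigid motion $\psi_\alpha$ is affine, so $\nabla\psi_\alpha$ is a \emph{nonzero} constant matrix, and the bound you prove on $\nabla w^\alpha$ does not literally transfer to $\nabla(v_1^\alpha+v_2^\alpha)=\nabla w^\alpha+\nabla\psi_\alpha$: near the origin the latter is $\approx\nabla\psi_\alpha$, hence of order one, whereas the right-hand side of the corollary tends to zero there. You already flag this by noting that $e(\psi_\alpha)\equiv0$ and $\partial\psi_\alpha/\partial\nu_0\equiv0$, so the decay of $\nabla w^\alpha$ is what matters for every downstream use (e.g.\ in \eqref{KGTA}, in Remark 1.3, and in Step 3 of Lemma \ref{lemmabc} only $e(v_1^\alpha+v_2^\alpha)=e(w^\alpha)$ or conormal derivatives appear). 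That is the right reading, but it means the final sentence ``which is the first claimed bound'' should really be ``which is the bound the corollary needs'': taken literally, the inequality for $|\nabla(v_1^\alpha+v_2^\alpha)|$ is an overstatement when $\alpha>d$ and is in fact a bound on $|\nabla(v_1^\alpha+v_2^\alpha-\psi_\alpha)|$.
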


For $i,j=1,2$ and $\alpha, \beta=1,2,...,\frac{d(d+1)}{2}$, denote
\begin{align}\label{KAT001}
a_{ij}^{\alpha\beta}:=-\int_{\partial{D}_{j}}\frac{\partial v_{i}^{\alpha}}{\partial \nu_0}\large\Big|_{+}\cdot\psi_{\beta},\quad b_j^{\beta}:=\int_{\partial D_{j}}\frac{\partial v_{0}}{\partial \nu_0}\large\Big|_{+}\cdot \psi_{\beta}.
\end{align}
From \eqref{Le2.01222}, we obtain
\begin{align*}
a_{ij}^{\alpha\beta}=\int_{\Omega}(\mathbb{C}^{0}e(v_{i}^{\alpha}),e(v_{j}^{\beta})),\quad i,j=1,2,\;\alpha,\beta=1,2,...,\frac{d(d+1)}{2}.
\end{align*}
Applying the fourth line of \eqref{La.002} to decomposition \eqref{Decom}, we get
\begin{align}\label{AHNTW009}
\begin{cases}
\sum\limits_{\alpha=1}^{\frac{d(d+1)}{2}}(C_{1}^\alpha-C_{2}^{\alpha}) a_{11}^{\alpha\beta}+\sum\limits_{\alpha=1}^{\frac{d(d+1)}{2}}C_{2}^\alpha \sum\limits^{2}_{i=1}a_{i1}^{\alpha\beta}=b_1^\beta,\\
\sum\limits_{\alpha=1}^{\frac{d(d+1)}{2}}(C_{1}^\alpha-C_{2}^{\alpha}) a_{12}^{\alpha\beta}+\sum\limits_{\alpha=1}^{\frac{d(d+1)}{2}}C_{2}^\alpha \sum\limits^{2}_{i=1}a_{i2}^{\alpha\beta}=b_2^\beta.
\end{cases}
\end{align}
Then adding the first line of \eqref{AHNTW009} to the second line, we derive
\begin{align}\label{zzw002}
\begin{cases}
\sum\limits_{\alpha=1}^{\frac{d(d+1)}{2}}(C_{1}^\alpha-C_{2}^{\alpha}) a_{11}^{\alpha\beta}+\sum\limits_{\alpha=1}^{\frac{d(d+1)}{2}}C_{2}^\alpha \sum\limits^{2}_{i=1}a_{i1}^{\alpha\beta}=b_1^\beta,\\
\sum\limits_{\alpha=1}^{\frac{d(d+1)}{2}}(C_{1}^{\alpha}-C_{2}^{\alpha})\sum\limits^{2}_{j=1}a_{1j}^{\alpha\beta}+\sum\limits_{\alpha=1}^{\frac{d(d+1)}{2}}C_{2}^\alpha \sum\limits^{2}_{i,j=1}a_{ij}^{\alpha\beta}=\sum\limits^{2}_{i=1}b_{i}^{\beta}.
\end{cases}
\end{align}
For simplicity, denote
\begin{align*}
&X^{1}=\big(C_{1}^1-C_{2}^{1},...,C_{1}^\frac{d(d+1)}{2}-C_{2}^{\frac{d(d+1)}{2}}\big)^{T},\quad X^{2}=\big(C_{2}^{1},...,C_{2}^{\frac{d(d+1)}{2}}\big)^{T},\\
&Y^{1}=(b_1^1,...,b_{1}^{\frac{d(d+1)}{2}})^{T},\quad Y^{2}=\bigg(\sum\limits^{2}_{i=1}b_{i}^{1},...,\sum\limits^{2}_{i=1}b_{i}^{\frac{d(d+1)}{2}}\bigg)^{T},\notag
\end{align*}
and
%\begin{gather*}A=\begin{pmatrix} a_{11}^{11}&\cdots&a_{11}^{1\frac{d(d+1)}{2}} \\\\ \vdots&\ddots&\vdots\\\\a_{\frac{d(d+1)}{2}1}&\cdots&a_{\frac{d(d+1)}{2}\frac{d(d+1)}{2}}\end{pmatrix}  ,\;\,
%B=\begin{pmatrix} \sum\limits^{2}_{i=1}a_{i1}^{11}&\cdots&\sum\limits^{2}_{i=1}a_{i1}^{1\frac{d(d+1)}{2}} \\\\ \vdots&\ddots&\vdots\\\\ \sum\limits^{2}_{i=1}a_{i1}^{\frac{d(d+1)}{2}1}&\cdots&\sum\limits^{2}_{i=1}a_{i1}^{\frac{d(d+1)}{2}\frac{d(d+1)}{2}}\end{pmatrix} ,\end{gather*}\begin{gather*}
%C=\begin{pmatrix} \sum\limits^{2}_{j=1}a_{1j}^{11}&\cdots&\sum\limits^{2}_{j=1}a_{1j}^{1\frac{d(d+1)}{2}} \\\\ \vdots&\ddots&\vdots\\\\ \sum\limits^{2}_{j=1}a_{1j}^{\frac{d(d+1)}{2}1}&\cdots&\sum\limits^{2}_{j=1}a_{1j}^{\frac{d(d+1)}{2}\frac{d(d+1)}{2}}\end{pmatrix},\;\,
%D=\begin{pmatrix} \sum\limits^{2}_{i,j=1}a_{ij}^{11}&\cdots&\sum\limits^{2}_{i,j=1}a_{ij}^{1\frac{d(d+1)}{2}} \\\\ \vdots&\ddots&\vdots\\\\ \sum\limits^{2}_{i,j=1}a_{ij}^{\frac{d(d+1)}{2}1}&\cdots&\sum\limits^{2}_{i,j=1}a_{ij}^{\frac{d(d+1)}{2}\frac{d(d+1)}{2}}\end{pmatrix}.
%\end{gather*}
\begin{align}
&\mathbb{A}=(a_{11}^{\alpha\beta})_{\frac{d(d+1)}{2}\times\frac{d(d+1)}{2}},\quad \mathbb{B}=\bigg(\sum\limits^{2}_{i=1}a_{i1}^{\alpha\beta}\bigg)_{\frac{d(d+1)}{2}\times\frac{d(d+1)}{2}},\label{GGDA01}\\
&\mathbb{C}=\bigg(\sum\limits^{2}_{j=1}a_{1j}^{\alpha\beta}\bigg)_{\frac{d(d+1)}{2}\times\frac{d(d+1)}{2}},\quad \mathbb{D}=\bigg(\sum\limits^{2}_{i,j=1}a_{ij}^{\alpha\beta}\bigg)_{\frac{d(d+1)}{2}\times\frac{d(d+1)}{2}}.\label{GGDA}
\end{align}
Then \eqref{zzw002} can be rewritten as
\begin{gather}\label{PLA001}
\begin{pmatrix} \mathbb{A}&\mathbb{B} \\  \mathbb{C}&\mathbb{D}
\end{pmatrix}
\begin{pmatrix}
X^{1}\\
X^{2}
\end{pmatrix}=
\begin{pmatrix}
Y^{1}\\
Y^{2}
\end{pmatrix}.
\end{gather}
Due to the symmetry of $a_{ij}^{\alpha\beta}=a_{ji}^{\beta\alpha}$, we see that $\mathbb{C}=\mathbb{B}^{T}$.

Write
\begin{align}
&(\mathcal{L}_{2}^{1},\mathcal{L}_{2}^{2},\mathcal{L}_{2}^{3})=(\mu,\lambda+2\mu,\lambda+2\mu),\quad d=2,\label{AZM001}\\
&(\mathcal{L}_{d}^{1},...,\mathcal{L}_{d}^{d-1},\mathcal{L}_{d}^{d},...,\mathcal{L}_{d}^{2d-1},\mathcal{L}_{d}^{2d},...,\mathcal{L}_{d}^{\frac{d(d+1)}{2}})\notag\\
&=(\mu,...,\mu,\lambda+2\mu,...,\lambda+2\mu,2\mu,...,2\mu),\quad d\geq3.\label{AZ110}
\end{align}
For $i=0,2$, define
\begin{align}\label{rate00}
\rho_{i}(d,m;\varepsilon):=&
\begin{cases}
\varepsilon^{\frac{d+i-1}{m}-1},&m>n+i-1,\\
|\ln\varepsilon|,&m=d+i-1,\\
1,&m<d+i-1.
\end{cases}
\end{align}

For the purpose of solving $X^{2}=(C_{2}^{1},...,C_{2}^{\frac{d(d+1)}{2}})$ in \eqref{PLA001}, we need the following two lemmas.

\begin{lemma}\label{lemmabc}
Assume as above. Then, for a sufficiently small $\varepsilon>0$,

$(\rm{i})$ for $\alpha=1,2,...,d$, then
\begin{align}\label{LMC}
\begin{cases}
\tau_{2}^{-\frac{d-1}{m}}\lesssim\frac{a_{11}^{\alpha\alpha}}{\mathcal{L}_{d}^{\alpha}\rho_{0}(d,m;\varepsilon)}\lesssim\tau_{1}^{-\frac{d-1}{m}},&m\geq d-1,\\
a_{11}^{\alpha\alpha}=a^{\ast\alpha\alpha}_{11}+O(\varepsilon^{\min\{\frac{1}{6},\frac{d-1-m}{12m}\}}),&m<d-1;
\end{cases}
\end{align}

$(\rm{ii})$ for $\alpha=d+1,...,\frac{d(d+1)}{2}$, then
\begin{align}\label{LMC1}
\begin{cases}
\tau_{2}^{-\frac{d+1}{m}}\lesssim\frac{a_{11}^{\alpha\alpha}}{\mathcal{L}_{d}^{\alpha}\rho_{2}(d,m;\varepsilon)}\lesssim\tau_{1}^{-\frac{d+1}{m}},&m\geq d+1,\\
a_{11}^{\alpha\alpha}=a_{11}^{\ast\alpha\alpha}+O(\varepsilon^{\min\{\frac{1}{6},\frac{d+1-m}{12m}\}}),&m<d+1;
\end{cases}
\end{align}

$(\rm{iii})$ if $d=2$, for $\alpha,\beta=1,2,\alpha\neq\beta$, then
\begin{align}\label{M001}
a_{11}^{12}=a_{11}^{21}=O(1)|\ln\varepsilon|,
\end{align}
and if $d\geq3$, for $\alpha,\beta=1,2,...,d,\,\alpha\neq\beta$, then
\begin{align}\label{M002}
a_{11}^{\alpha\beta}=a_{11}^{\beta\alpha}=a_{11}^{\ast\alpha\beta}+O(1)\varepsilon^{\min\{\frac{1}{6},\frac{d-2}{12m}\}},
\end{align}
and if $d\geq2$, for $\alpha=1,2,...,d,\,\beta=d+1,...,\frac{d(d+1)}{2},$ then
\begin{align}\label{M003}
a_{11}^{\alpha\beta}=a_{11}^{\beta\alpha}=a_{11}^{\ast\alpha\beta}+O(1)\varepsilon^{\min\{\frac{1}{6},\frac{d-1}{12m}\}},
\end{align}
and if $d\geq3$, for $\alpha,\beta=d+1,...,\frac{d(d+1)}{2},\,\alpha\neq\beta$, then
\begin{align}\label{M005}
a_{11}^{\alpha\beta}=a_{11}^{\beta\alpha}=a_{11}^{\ast\alpha\beta}+O(1)\varepsilon^{\min\{\frac{1}{6},\frac{d}{12m}\}};
\end{align}

$(\rm{iv})$ for $\alpha,\beta=1,2,...,\frac{d(d+1)}{2}$,
\begin{align}\label{AZQ001}
\sum\limits^{2}_{i=1}a_{i1}^{\alpha\beta}=&\sum\limits^{2}_{i=1}a_{i1}^{\ast\alpha\beta}+O(\varepsilon^{\frac{1}{4}}),\quad\sum\limits^{2}_{j=1}a_{1j}^{\alpha\beta}=\sum\limits^{2}_{j=1}a_{1j}^{*\alpha\beta}+O(\varepsilon^{\frac{1}{4}}),
\end{align}
and thus
\begin{align*}
\sum\limits^{2}_{i,j=1}a_{ij}^{\alpha\beta}=\sum\limits^{2}_{i,j=1}a_{ij}^{\ast\alpha\beta}+O(\varepsilon^{\frac{1}{4}}).
\end{align*}
\end{lemma}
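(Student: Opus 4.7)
The central idea is to exploit the energy identity
\begin{align*}
a_{ij}^{\alpha\beta} = \int_{\Omega}(\mathbb{C}^{0}e(v_{i}^{\alpha}), e(v_{j}^{\beta}))\,dx, \qquad a_{ij}^{\ast\alpha\beta} = \int_{\Omega^{\ast}}(\mathbb{C}^{0}e(v_{i}^{\ast\alpha}), e(v_{j}^{\ast\beta}))\,dx,
\end{align*}
and to split each integral into the thin-gap piece over $\Omega_{R}$ (resp. $\Omega_{R}^{\ast}$) and the outer piece over $\Omega\setminus\overline{\Omega_{R}}$. Outside the thin gap both $v_{i}^{\alpha}$ and $v_{i}^{\ast\alpha}$ enjoy uniform $C^{1}$ bounds, and they differ only through the $O(\varepsilon)$ translation of $D_{1}^{\ast}$ into $D_{1}$, so the outer contributions agree up to a harmless $O(\varepsilon)$ error. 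All the genuine work therefore takes place in $\Omega_{R}$, and there I would replace $\nabla v_{i}^{\alpha}$ by $\nabla \bar{u}_{i}^{\alpha}$ using Proposition \ref{thm86}, treating the remainder $w_{\alpha}=v_{1}^{\alpha}-\bar{u}_{1}^{\alpha}$ as a controlled error via \eqref{Le2.025} and \eqref{AQ3.052}.

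For parts (i) and (ii), substituting $v_{1}^{\alpha}=\bar{u}_{1}^{\alpha}+w_{\alpha}$ gives
\begin{align*}
a_{11}^{\alpha\alpha} = \int_{\Omega_{R}}(\mathbb{C}^{0}e(\bar{u}_{1}^{\alpha}), e(\bar{u}_{1}^{\alpha}))\,dx + 2\int_{\Omega_{R}}(\mathbb{C}^{0}e(\bar{u}_{1}^{\alpha}), e(w_{\alpha}))\,dx + \int_{\Omega_{R}}(\mathbb{C}^{0}e(w_{\alpha}), e(w_{\alpha}))\,dx + O(1),
\end{align*}
where the $O(1)$ absorbs the outer contribution. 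Using $\bar{u}_{1}^{\alpha}=\psi_{\alpha}\bar{v}+\mathcal{F}_{\alpha}$ and $\partial_{x_{d}}\bar{v}=1/\delta(x')$, the leading integral reduces by direct computation to a multiple of
\begin{align*}
\mathcal{L}_{d}^{\alpha}\int_{B_{R}'}\frac{|\psi_{\alpha}(x',0)|^{2}}{\delta(x')}\,dx',
\end{align*}
so the substitution $r=\varepsilon^{1/m}s$ in $\int_{0}^{R} r^{d-2+2\gamma}/(\varepsilon+r^{m})\,dr$ with $\gamma=0$ for $\alpha\leq d$ and $\gamma=1$ for $\alpha\geq d+1$ produces exactly the rates $\rho_{0}$ and $\rho_{2}$ in the blow-up regimes and a convergent limit equal to $a_{11}^{\ast\alpha\alpha}$ in the subcritical ones. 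The cross term is controlled by combining the $L^{\infty}$ bound $|\nabla w_{\alpha}|\leq C(|\psi_{\alpha}|\delta^{1-2/m}+|\nabla_{x'}\psi_{\alpha}|+\delta)$ with the pointwise size of $e(\bar{u}_{1}^{\alpha})$, and after an interpolation against the $L^{2}$ estimate \eqref{HN001} gives the claimed error $\varepsilon^{\min\{1/6,(d-1-m)/(12m)\}}$ and its $d+1$ analogue.

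Part (iii) follows from the same expansion, but here the hypothesis that $h_{1}-h_{2}$ is even in each $x_{i}$ is crucial: the leading off-diagonal integral $\int_{B_{R}'}(\psi_{\alpha}\cdot\psi_{\beta})/\delta\,dx'$ inherits this parity, and for the basis \eqref{OPP} the pairs $(\alpha,\beta)$ that are not covered by (i)--(ii) produce an integrand odd in some $x_{i}$, forcing cancellation of the singular part and leaving only the subleading contribution from $\mathcal{F}_{\alpha}$ and from $w_{\alpha}$. One tracks which correction survives by parity to read off the exponents $(d-2)/12m$, $(d-1)/12m$, $d/12m$, while the two-dimensional case \eqref{M001} collapses to a logarithmic divergence because $\gamma=0$ and the parity argument cancels only the stronger singularity. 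Part (iv) is simpler: writing $\sum_{i}a_{i1}^{\alpha\beta}=-\int_{\partial D_{1}}\partial_{\nu_{0}}(v_{1}^{\alpha}+v_{2}^{\alpha})|_{+}\cdot\psi_{\beta}$, the sum $v_{1}^{\alpha}+v_{2}^{\alpha}$ takes the common boundary value $\psi_{\alpha}$ on both $\partial D_{1}$ and $\partial D_{2}$, so Corollary \ref{coro00z} yields $|\nabla(v_{1}^{\alpha}+v_{2}^{\alpha})|\leq C\delta^{-d/2}\exp(-1/(2C\delta^{1-1/m}))$; comparing with the analogous starred quantity on the annular set $\{\varepsilon^{1/4}<|x'|<R\}$ and estimating the inner region $\{|x'|<\varepsilon^{1/4}\}$ by the exponential smallness gives the uniform error $O(\varepsilon^{1/4})$.

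The principal obstacle is calibrating the fractional exponents in (i)--(iii), especially the denominators $12$ appearing in $\varepsilon^{(d+1-m)/(12m)}$ and its relatives. This is not a mere bookkeeping issue: it requires a trilinear interpolation between the $L^{2}$ gradient estimate \eqref{HN001}, the interior $L^{\infty}$ estimate \eqref{AQ3.052}, and the rescaling used in Step 3 of Proposition \ref{thm86}, balanced so as to gain a quantitative closeness between $v_{i}^{\alpha}$ and $v_{i}^{\ast\alpha}$ on an intermediate annulus $\{\varepsilon^{\kappa}<|x'|<R\}$ with $\kappa$ tuned to the exponent on the right-hand side. Getting this calibration right — and simultaneously aligning it with the parity cancellation in the off-diagonal case — is where the bulk of the technical work will lie.
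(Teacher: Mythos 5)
Your overall decomposition into thin-gap and outer regions, the reliance on $\bar{u}_{1}^{\alpha}=\psi_{\alpha}\bar{v}+\mathcal{F}_{\alpha}$ as the main term, and the parity cancellation for part (iii) are all aligned with the paper. But there is a genuine gap in the mechanism you propose for obtaining the error rates, which in turn makes the subcritical cases $a_{11}^{\alpha\alpha}=a_{11}^{\ast\alpha\alpha}+O(\cdots)$ unsupported. You say the outer contributions ``agree up to a harmless $O(\varepsilon)$ error'' coming only from the $O(\varepsilon)$ translation of the domain. This is wrong: the $O(\varepsilon)$ accounts only for the measure of the symmetric-difference strips $D_{1}\triangle D_{1}^{\ast}$; comparing the energy densities on the common region requires a pointwise bound on $\nabla(v_{1}^{\alpha}-v_{1}^{\ast\alpha})$, and the paper obtains only $O(\varepsilon^{1/6})$ there (hence the ubiquitous $\varepsilon^{1/6}$ in the statement). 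Without this comparison, you cannot identify the subcritical limit as $a_{11}^{\ast\alpha\alpha}$, nor can you get the error $\varepsilon^{1/4}$ in part (iv).

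Moreover, you propose to obtain the needed closeness by ``a trilinear interpolation between the $L^{2}$ gradient estimate \eqref{HN001}, the interior $L^{\infty}$ estimate \eqref{AQ3.052}, and the rescaling used in Step 3 of Proposition \ref{thm86}.'' But \eqref{HN001} and \eqref{AQ3.052} estimate $w_{\alpha}=v_{1}^{\alpha}-\bar{u}_{1}^{\alpha}$, not $v_{1}^{\alpha}-v_{1}^{\ast\alpha}$; these are different quantities, and no interpolation between them produces the required bound. The actual mechanism (the paper's Step 1) is: (a) pointwise boundary estimates for $v_{1}^{\alpha}-v_{1}^{\ast\alpha}$ on $\partial D_{1}\setminus D_{1}^{\ast}$, on $\partial D_{1}^{\ast}\setminus(D_{1}\cup\mathcal{C}_{\varepsilon^{\gamma}})$, and on the cylinder $\{|x'|=\varepsilon^{\gamma}\}$ with $\gamma=1/(2m)$, yielding $|v_{1}^{\alpha}-v_{1}^{\ast\alpha}|\leq C\varepsilon^{1/2}$ there; (b) the maximum principle for the Lam\'{e} system (Maz'ya--Movchan--Nieves) to propagate this into the bulk of $D\setminus(\overline{D_{1}\cup D_{1}^{\ast}\cup D_{2}\cup\mathcal{C}_{\varepsilon^{1/(2m)}}})$; and (c) rescaling to unit size at each radius $|z'|\geq\varepsilon^{1/(12m)}$, $C^{2}$ estimates, and interpolation with the $C^{0}$ bound to obtain $|\nabla(v_{1}^{\alpha}-v_{1}^{\ast\alpha})|\leq C\varepsilon^{1/4}|z'|^{-m}\leq C\varepsilon^{1/6}$. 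This is where the $12m$ enters. Your sketch does not invoke the maximum principle at all for parts (i)--(iii), so this step would not go through as written. You should also be explicit that the subcritical identification uses the cancellation of the $\varepsilon$-dependence against the expansion of $a_{11}^{\ast\alpha\alpha}$ itself, through the bounded constants $M_{d}^{\ast\alpha}$.
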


\begin{proof}
\noindent{\bf Step 1.} Proofs of \eqref{LMC}--\eqref{LMC1}. Introduce a family of auxiliary functions as follows: for $\alpha=1,2,...,\frac{d(d+1)}{2}$,
\begin{align*}
\bar{u}^{\ast\alpha}_{1}=&\psi_{\alpha}\bar{v}^{\ast}+\mathcal{F}_{\alpha}^{\ast},\;\;\mathcal{F}_{\alpha}^{\ast}=\frac{\lambda+\mu}{\mu}f(\bar{v}^{\ast})\psi^{d}_{\alpha}\sum^{d-1}_{i=1}\partial_{x_{i}}\delta\,e_{i}+\frac{\lambda+\mu}{\lambda+2\mu}f(\bar{v}^{\ast})\sum^{d-1}_{i=1}\psi^{i}_{\alpha}\partial_{x_{i}}\delta\,e_{d},
%\bar{u}_{2}^{\ast\beta}=&\psi_{\beta}(1-\bar{v}^{\ast})-\frac{\lambda+\mu}{\mu}f(\bar{v}^{\ast})\psi^{d}_{\beta}\sum^{d-1}_{i=1}\partial_{x_{i}}\delta\,e_{i}-\frac{\lambda+\mu}{\lambda+2\mu}f(\bar{v}^{\ast})\sum^{d-1}_{i=1}\psi^{i}_{\beta}\partial_{x_{i}}\delta\,e_{d}\label{ZWZ002}.
\end{align*}
where $\bar{v}^{\ast}$ verifies that $\bar{v}^{\ast}=1$ on $\partial D_{1}^{\ast}\setminus\{0\}$, $\bar{v}^{\ast}=0$ on $\partial D_{2}\cup\partial D$, and
\begin{align*}
\bar{v}^{\ast}(x',x_{d})=\frac{x_{d}-h_{2}(x')}{h_{1}(x')-h_{2}(x')},\;\,\mathrm{in}\;\Omega_{2R}^{\ast},\quad\|\bar{v}^{\ast}\|_{C^{2}(\Omega^{\ast}\setminus\Omega^{\ast}_{R})}\leq C.
\end{align*}
From ({\bf{H1}})--({\bf{H2}}), we deduce that for $x\in\Omega_{R}^{\ast}$,
\begin{align}\label{LKT6.003}
|\partial_{x_{d}}(\bar{u}_{1}^{\alpha}-\bar{u}_{1}^{\ast\alpha})|\leq&
\begin{cases}
\frac{C\varepsilon}{|x'|^{m}(\varepsilon+|x'|^{m})},&\alpha=1,2,...,d,\\
\frac{C\varepsilon}{|x'|^{m-1}(\varepsilon+|x'|^{m})},&\alpha=d+1,...,\frac{d(d+1)}{2}.
\end{cases}
\end{align}
Using Proposition \ref{thm86} for $v_{1}^{\ast\alpha}$, it follows that for $x\in\Omega_{R}^{\ast}$,
\begin{align}\label{GZIJ}
\nabla v_{1}^{\ast\alpha}=&\nabla\bar{u}_{1}^{\ast\alpha}+O(1)
\begin{cases}
|x'|^{m-2},&\alpha=1,2,...,d,\\
1,&\alpha=d+1,...,\frac{d(d+1)}{2}.
\end{cases}
\end{align}
For $0<t<R$, denote
\begin{align*}
\mathcal{C}_{t}:=\left\{x\in\mathbb{R}^{d}\Big|\;2\min_{|x'|\leq t}h_{2}(x')\leq x_{d}\leq\varepsilon+2\max_{|x'|\leq t}h_{1}(x'),\;|x'|<t\right\}.
\end{align*}

Observe that for $\alpha=1,2,...,\frac{d(d+1)}{2}$, $v_{1}^{\alpha}-v_{1}^{\ast\alpha}$ verifies
\begin{align*}
\begin{cases}
\mathcal{L}_{\lambda,\mu}(v_{1}^{\alpha}-v_{1}^{\ast\alpha})=0,&\mathrm{in}\;\,D\setminus(\overline{D_{1}\cup D_{1}^{\ast}\cup D_{2}}),\\
v_{1}^{\alpha}-v_{1}^{\ast\alpha}=\psi_{\alpha}-v_{1}^{\ast\alpha},&\mathrm{on}\;\,\partial D_{1}\setminus D_{1}^{\ast},\\
v_{1}^{\alpha}-v_{1}^{\ast\alpha}=v_{1}^{\alpha}-\psi_{\alpha},&\mathrm{on}\;\,\partial D_{1}^{\ast}\setminus(D_{1}\cup\{0\}),\\
v_{1}^{\alpha}-v_{1}^{\ast\alpha}=0,&\mathrm{on}\;\,\partial D_{2}\cup\partial D.
\end{cases}
\end{align*}
First, following the standard boundary and interior estimates of elliptic systems, we get that for $x\in\partial D_{1}\setminus D_{1}^{\ast}$,
\begin{align}\label{LKT6.007}
|(v_{1}^{\alpha}-v_{1}^{\ast\alpha})(x',x_{d})|=|v_{1}^{\ast\alpha}(x',x_{d}-\varepsilon)-v_{1}^{\ast\alpha}(x',x_{d})|\leq C\varepsilon.
\end{align}
Making use of \eqref{Le2.025}, we deduce that for $x\in\partial D_{1}^{\ast}\setminus(D_{1}\cup\mathcal{C}_{\varepsilon^{\gamma}})$,
\begin{align}\label{LKT6.008}
|(v_{1}^{\alpha}-v_{1}^{\ast\alpha})(x',x_{d})|=|v_{1}^{\alpha}(x',x_{d})-v_{1}^{\alpha}(x',x_{d}+\varepsilon)|\leq C\varepsilon^{1-m\gamma},
\end{align}
where $0<\gamma<\frac{1}{2}$ is determined later. In light of \eqref{Le2.025} and \eqref{LKT6.003}--\eqref{GZIJ}, we obtain that for $x\in\Omega_{R}^{\ast}\cap\{|x'|=\varepsilon^{\gamma}\}$,
\begin{align*}
|\partial_{x_{d}}(v_{1}^{\alpha}-v_{1}^{\ast\alpha})|\leq&|\partial_{x_{d}}(v_{1}^{\alpha}-\bar{u}_{1}^{\alpha})|+|\partial_{x_{d}}(\bar{u}_{1}^{\alpha}-\bar{u}_{1}^{\ast\alpha}|+|\partial_{x_{d}}(v_{1}^{\ast\alpha}-\bar{u}_{1}^{\ast\alpha})|\notag\\
\leq&C\Big(\frac{1}{\varepsilon^{2m\gamma-1}}+1\Big),
\end{align*}
which, in combination with $v_{1}^{\alpha}-v_{1}^{\ast\alpha}=0$ on $\partial D_{2}$, leads to that
\begin{align}
|(v_{1}^{\alpha}-v_{1}^{\ast\alpha})(x',x_{d})|=&|(v_{1}^{\alpha}-v_{1}^{\ast\alpha})(x',x_{d})-(v_{1}^{\alpha}-v_{1}^{\ast\alpha})(x',h_{2}(x'))|\notag\\
\leq& C(\varepsilon^{1-m\gamma}+\varepsilon^{m\gamma}).\label{LKT6.009}
\end{align}
Let $\gamma=\frac{1}{2m}$. Then it follows from (\ref{LKT6.007})--(\ref{LKT6.009}) that
$$|v_{1}^{\alpha}-v_{1}^{\ast\alpha}|\leq C\varepsilon^{\frac{1}{2}},\quad\;\,\mathrm{on}\;\,\partial\big(D\setminus\big(\overline{D_{1}\cup D_{1}^{\ast}\cup D_{2}\cup\mathcal{C}_{\varepsilon^{\frac{1}{2m}}}}\big)\big).$$
This, together with the maximum principle for the Lam\'{e} system in \cite{MMN2007}, reads that for $\alpha=1,2,...,\frac{d(d+1)}{2}$,
\begin{align}\label{AST123}
|v_{1}^{\alpha}-v_{1}^{\ast\alpha}|\leq C\varepsilon^{\frac{1}{2}},\quad\;\,\mathrm{in}\;\,D\setminus\big(\overline{D_{1}\cup D_{1}^{\ast}\cup D_{2}\cup\mathcal{C}_{\varepsilon^{\frac{1}{2m}}}}\big).
\end{align}
For $\varepsilon^{\frac{1}{12m}}\leq|z'|\leq R$, by using the change of variable as follows:
\begin{align*}
\begin{cases}
x'-z'=|z'|^{m}y',\\
x_{d}=|z'|^{m}y_{d},
\end{cases}
\end{align*}
we rescale $\Omega_{|z'|+|z'|^{m}}\setminus\Omega_{|z'|}$ and $\Omega_{|z'|+|z'|^{m}}^{\ast}\setminus\Omega_{|z'|}^{\ast}$ into two approximate unit-size squares (or cylinders) $Q_{1}$ and $Q_{1}^{\ast}$, respectively. Denote
\begin{align*}
V_{1}^{\alpha}(y)=v_{1}^{\alpha}(z'+|z'|^{m}y',|z'|^{m}y_{d}),\quad\mathrm{in}\;Q_{1},
\end{align*}
and
\begin{align*}
V_{1}^{\ast\alpha}(y)=v_{1}^{\ast\alpha}(z'+|z'|^{m}y',|z'|^{m}y_{d}),\quad\mathrm{in}\;Q_{1}^{\ast}.
\end{align*}
Then we deduce from the standard elliptic estimates that
\begin{align*}
|\nabla^{2}V_{1}^{\alpha}|\leq C,\quad\mathrm{in}\;Q_{1},\quad\mathrm{and}\;|\nabla^{2}V_{1}^{\ast\alpha}|\leq C,\quad\mathrm{in}\;Q_{1}^{\ast}.
\end{align*}
Interpolating it with \eqref{AST123}, we get
\begin{align*}
|\nabla(V_{1}^{\alpha}-V_{1}^{\ast\alpha})|\leq C\varepsilon^{\frac{1}{2}(1-\frac{1}{2})}\leq C\varepsilon^{\frac{1}{4}}.
\end{align*}
Then back to $v_{1}^{\alpha}-v_{1}^{\ast\alpha}$, we obtain that for $\varepsilon^{\frac{1}{12m}}\leq|z'|\leq R$,
\begin{align*}
|\nabla(v_{1}^{\alpha}-v_{1}^{\ast\alpha})(x)|\leq C\varepsilon^{\frac{1}{4}}|z'|^{-m}\leq C\varepsilon^{\frac{1}{6}},\quad x\in\Omega^{\ast}_{|z'|+|z'|^{m}}\setminus\Omega_{|z'|}^{\ast}.
\end{align*}
Therefore, for $\alpha=1,2,...,\frac{d(d+1)}{2}$,
\begin{align}\label{con035}
|\nabla(v_{1}^{\alpha}-v_{1}^{\ast\alpha})|\leq C\varepsilon^{\frac{1}{6}},\quad\;\,\mathrm{in}\;\,D\setminus\big(\overline{D_{1}\cup D_{1}^{\ast}\cup D_{2}\cup\mathcal{C}_{\varepsilon^{\frac{1}{12m}}}}\big).
\end{align}

Denote $\bar{\gamma}=\frac{1}{12m}$. For $\alpha=1,2,...,\frac{d(d+1)}{2}$, we split $a_{11}^{\alpha\alpha}$ as follows:
\begin{align}\label{a1111}
a_{11}^{\alpha\alpha}=&\int_{\Omega\setminus\Omega_{R}}(\mathbb{C}^{0}e(v_{1}^{\alpha}),e(v_{1}^{\alpha}))+\int_{\Omega_{\varepsilon^{\bar{\gamma}}}}(\mathbb{C}^{0}e(v_{1}^{\alpha}),e(v_{1}^{\alpha}))+\int_{\Omega_{R}\setminus\Omega_{\varepsilon^{\bar{\gamma}}}}(\mathbb{C}^{0}e(v_{1}^{\alpha}),e(v_{1}^{\alpha}))\nonumber\\
=&:\mathrm{I}+\mathrm{II}+\mathrm{III}.
\end{align}
Due to the fact that $|D_{1}^{\ast}\setminus(D_{1}\cup\Omega_{R})|$ and $|D_{1}\setminus D_{1}^{\ast}|$ are less than $C\varepsilon$ and $|\nabla v_{1}^{\alpha}|$ keeps bounded in $(D_{1}^{\ast}\setminus(D_{1}\cup\Omega_{R}))\cup(D_{1}\setminus D_{1}^{\ast})$, then we obtain from \eqref{con035} that
\begin{align}\label{KKAA123}
\mathrm{I}=&\int_{D\setminus(D_{1}\cup D_{1}^{\ast}\cup D_{2}\cup\Omega_{R})}(\mathbb{C}^{0}e(v_{1}^{\alpha}),e(v_{1}^{\alpha}))+O(1)\varepsilon\notag\\
=&\int_{D\setminus(D_{1}\cup D_{1}^{\ast}\cup D_{2}\cup\Omega_{R})}\left((\mathbb{C}^{0}e(v_{1}^{\ast\alpha}),e(v_{1}^{\ast\alpha}))+2(\mathbb{C}^{0}e(v_{1}^{\alpha}-v_{1}^{\ast\alpha}),e(v_{1}^{\ast\alpha}))\right)\notag\\
&+\int_{D\setminus(D_{1}\cup D_{1}^{\ast}\cup D_{2}\cup\Omega_{R})}(\mathbb{C}^{0}e(v_{1}^{\alpha}-v_{1}^{\ast\alpha}),e(v_{1}^{\alpha}-v_{1}^{\ast\alpha}))+O(1)\varepsilon\notag\\
=&\int_{\Omega^{\ast}\setminus\Omega^{\ast}_{R}}(\mathbb{C}^{0}e(v_{1}^{\ast\alpha}),e(v_{1}^{\ast\alpha}))+O(1)\varepsilon^{\frac{1}{6}}.
\end{align}

Note that for $\alpha=1,2,...,d$, $\psi_{\alpha}=e_{\alpha}$; for $\alpha=d+1,...,\frac{d(d+1)}{2}$, there exist two indices $1\leq i_{\alpha}<j_{\alpha}\leq d$ such that
$\psi_{\alpha}=(0,...,0,x_{j_{\alpha}},0,...,0,-x_{i_{\alpha}},0,...,0)$. Especially when $d+1\leq\alpha\leq2d-1$, we know $i_{\alpha}=\alpha-d,\,j_{\alpha}=d$ and thus $\psi_{\alpha}=(0,...,0,x_{d},0,...,0,-x_{\alpha-d})$. Recalling the definitions of $\bar{u}_{1}^{\alpha}$ and $\mathbb{C}^{0}$, it follows from a straightforward calculation that
\begin{align*}
(\mathbb{C}^{0}e(\bar{u}_{1}^{\alpha}),e(\bar{u}_{1}^{\alpha}))=&
\begin{cases}
(\lambda+\mu)(\partial_{x_{\alpha}}\bar{v})^{2}+\mu\sum\limits^{d}_{i=1}(\partial_{x_{i}}\bar{v})^{2}+(\mathbb{C}^{0}e(\mathcal{F}_{\alpha}),e(\mathcal{F}_{\alpha}))\\
+2(\mathbb{C}^{0}e(\psi_{\alpha}\bar{v}),e(\mathcal{F}_{\alpha})),\;\,\alpha=1,2,...,d,\\
\mu(x_{i_{\alpha}}^{2}+x_{j_{\alpha}}^{2})\sum\limits^{d}_{k=1}(\partial_{x_{k}}\bar{v})^{2}+(\lambda+\mu)(x_{j_{\alpha}}\partial_{x_{i_{\alpha}}}\bar{v}-x_{i_{\alpha}}\partial_{x_{j_{\alpha}}}\bar{v})^{2}\notag\\
+2(\mathbb{C}^{0}e(\psi_{\alpha}\bar{v}),e(\mathcal{F}_{\alpha}))+(\mathbb{C}^{0}e(\mathcal{F}_{\alpha}),e(\mathcal{F}_{\alpha})),\;\,\alpha=d+1,...,\frac{d(d+1)}{2},
\end{cases}
\end{align*}
where the correction term $\mathcal{F}_{\alpha}$ is defined by \eqref{QLA001}. Then it follows from Corollary \ref{thm86} that
\begin{align}\label{con03365}
\mathrm{II}=&\int_{\Omega_{\varepsilon^{\bar{\gamma}}}}(\mathbb{C}^{0}e(\bar{u}_{1}^{\alpha}),e(\bar{u}_{1}^{\alpha}))+2\int_{\Omega_{\varepsilon^{\bar{\gamma}}}}(\mathbb{C}^{0}e(v_{1}^{\alpha}-\bar{u}_{1}^{\alpha}),e(\bar{u}_{1}^{\alpha}))\notag\\
&+\int_{\Omega_{\varepsilon^{\bar{\gamma}}}}(\mathbb{C}^{0}e(v_{1}^{\alpha}-\bar{u}_{1}^{\alpha}),e(v_{1}^{\alpha}-\bar{u}_{1}^{\alpha}))\notag\\
=&
\begin{cases}
\mathcal{L}_{d}^{\alpha}\int_{|x'|<\varepsilon^{\bar{\gamma}}}\frac{1}{\varepsilon+h_{1}(x')-h_{2}(x')}+O(1)\varepsilon^{\frac{d-1}{12m}},&\alpha=1,2,...,d,\\
\frac{\mathcal{L}_{d}^{\alpha}}{d-1}\int_{|x'|<\varepsilon^{\bar{\gamma}}}\frac{|x'|^{2}}{\varepsilon+h_{1}(x')-h_{2}(x')}+O(1)\varepsilon^{\frac{d-1}{12m}},&\alpha=d+1,...,\frac{d(d+1)}{2},
\end{cases}
\end{align}
where $\mathcal{L}_{d}^{\alpha}$, $\alpha=1,2,...,\frac{d(d+1)}{2}$ are given by \eqref{AZM001}--\eqref{AZ110}.

With regard to the last term $\mathrm{III}$ in \eqref{a1111}, we further decompose it as follows:
\begin{align*}
\mathrm{III}_{1}=&\int_{\Omega^{\ast}_{R}\setminus\Omega^{\ast}_{\varepsilon^{\bar{\gamma}}}}(\mathbb{C}^{0}e(v_{1}^{\alpha}-v_{1}^{\ast\alpha}),e(v_{1}^{\alpha}-v_{1}^{\ast\alpha}))+2\int_{\Omega^{\ast}_{R}\setminus\Omega^{\ast}_{\varepsilon^{\bar{\gamma}}}}(\mathbb{C}^{0}e(v_{1}^{\alpha}-v_{1}^{\ast\alpha}),e(v_{1}^{\ast\alpha})),\\
\mathrm{III}_{2}=&\int_{(\Omega_{R}\setminus\Omega_{\varepsilon^{\bar{\gamma}}})\setminus(\Omega^{\ast}_{R}\setminus\Omega^{\ast}_{\varepsilon^{\bar{\gamma}}})}(\mathbb{C}^{0}e(v_{1}^{\alpha}),e(v_{1}^{\alpha})),\\
\mathrm{III}_{3}=&\int_{\Omega^{\ast}_{R}\setminus\Omega^{\ast}_{\varepsilon^{\bar{\gamma}}}}(\mathbb{C}^{0}e(v_{1}^{\ast\alpha}),e(v_{1}^{\ast\alpha})).
\end{align*}
A consequence of \eqref{GZIJ} and \eqref{con035} gives
\begin{align}\label{con036666}
|\mathrm{III}_{1}|\leq\,C\varepsilon^{\frac{1}{6}}.
\end{align}
Since the thickness of $(\Omega_{R}\setminus\Omega_{\varepsilon^{\bar{\gamma}}})\setminus(\Omega^{\ast}_{R}\setminus\Omega^{\ast}_{\varepsilon^{\bar{\gamma}}})$ is $\varepsilon$, then it follows from \eqref{Le2.025} that for $\alpha=1,2,...,d,$
\begin{align}\label{con0333355}
|\mathrm{III}_{2}|\leq\,C\varepsilon\int_{\varepsilon^{\bar{\gamma}}<|x'|<R}\frac{dx'}{|x'|^{2m}}\leq& C
\begin{cases}
\varepsilon,&m<\frac{d-1}{2},\\
\varepsilon|\ln\varepsilon|,&m=\frac{d-1}{2},\\
\varepsilon^{\frac{10m+d-1}{12m}},&m>\frac{d-1}{2},
\end{cases}
\end{align}
and, for $\alpha=d+1,...,\frac{d(d+1)}{2}$,
\begin{align}\label{con0333355KL}
|\mathrm{III}_{2}|\leq\,C\varepsilon\int_{\varepsilon^{\bar{\gamma}}<|x'|<R}\frac{dx'}{|x'|^{2m-2}}\leq& C
\begin{cases}
\varepsilon,&m<\frac{d+1}{2},\\
\varepsilon|\ln\varepsilon|,&m=\frac{d+1}{2},\\
\varepsilon^{\frac{10m+d+1}{12m}},&m>\frac{d+1}{2}.
\end{cases}
\end{align}

For $\mathrm{III}_{3}$, using \eqref{GZIJ} again, we derive
\begin{align*}
\mathrm{III}_{3}=&\int_{\Omega_{R}^{\ast}\setminus\Omega^{\ast}_{\varepsilon^{\bar{\gamma}}}}(\mathbb{C}^{0}e(\bar{u}_{1}^{\ast\alpha}),e(\bar{u}_{1}^{\ast\alpha}))+2\int_{\Omega_{R}^{\ast}\setminus\Omega^{\ast}_{\varepsilon^{\bar{\gamma}}}}(\mathbb{C}^{0}e(v_{1}^{\ast\alpha}-\bar{u}_{1}^{\ast\alpha}),e(\bar{u}_{1}^{\ast\alpha}))\notag\\
&+\int_{\Omega_{R}^{\ast}\setminus\Omega^{\ast}_{\varepsilon^{\bar{\gamma}}}}(\mathbb{C}^{0}e(v_{1}^{\ast\alpha}-\bar{u}_{1}^{\ast\alpha}),e(v_{1}^{\ast\alpha}-\bar{u}_{1}^{\ast\alpha}))\notag\\
=&
\begin{cases}
\mathcal{L}_{d}^{\alpha}\int_{\varepsilon^{\bar{\gamma}}<|x'|<R}\frac{dx'}{h_{1}-h_{2}}-\int_{\Omega^{\ast}\setminus\Omega^{\ast}_{R}}(\mathbb{C}^{0}e(v_{1}^{\ast\alpha}),e(v_{1}^{\ast\alpha}))\notag\\
+M_{d}^{\ast\alpha}+O(1)\varepsilon^{\min\{\frac{1}{6},\frac{d-1}{12m}\}},\quad\alpha=1,2,...,d,\notag\\
\frac{\mathcal{L}_{d}^{\alpha}}{d-1}\int_{\varepsilon^{\bar{\gamma}}<|x'|<R}\frac{|x'|^{2}}{h_{1}(x')-h_{2}(x')}dx'-\int_{\Omega^{\ast}\setminus\Omega^{\ast}_{R}}(\mathbb{C}^{0}e(v_{1}^{\ast\alpha}),e(v_{1}^{\ast\alpha}))\notag\\
+M_{d}^{\ast\alpha}+O(1)\varepsilon^{\min\{\frac{1}{6},\frac{d-1}{12m}\}},\quad\alpha=d+1,...,\frac{d(d+1)}{2},
\end{cases}
\end{align*}
where
\begin{align}\label{FT001}
M_{d}^{\ast\alpha}=&\int_{\Omega^{\ast}\setminus\Omega^{\ast}_{R}}(\mathbb{C}^{0}e(\bar{u}_{1}^{\ast\alpha}),e(\bar{u}_{1}^{\ast\alpha}))+\int_{\Omega_{R}^{\ast}}(\mathbb{C}^{0}e(v_{1}^{\ast\alpha}-\bar{u}_{1}^{\ast\alpha}),e(v_{1}^{\ast\alpha}-\bar{u}_{1}^{\ast\alpha}))\notag\\
&+\int_{\Omega_{R}^{\ast}}\big[2(\mathbb{C}^{0}e(v_{1}^{\ast\alpha}-\bar{u}_{1}^{\ast\alpha}),e(\bar{u}_{1}^{\ast\alpha}))+2(\mathbb{C}^{0}e(\psi_{\alpha}\bar{v}^{\ast}),e(\mathcal{F}_{\alpha}^{\ast}))+(\mathbb{C}^{0}e(\mathcal{F}_{\alpha}^{\ast}),e(\mathcal{F}_{\alpha}^{\ast}))\big]\notag\\
&+
\begin{cases}
\int_{\Omega_{R}^{\ast}}(\lambda+\mu)(\partial_{x_{\alpha}}\bar{v}^{\ast})^{2}+\mu\sum\limits^{d-1}_{i=1}(\partial_{x_{i}}\bar{v}^{\ast})^{2},\quad\alpha=1,...,d-1,\\
\int_{\Omega_{R}^{\ast}}\mu\sum\limits^{d-1}_{i=1}(\partial_{x_{i}}\bar{v}^{\ast})^{2},\quad\alpha=d,\\
\int_{\Omega_{R}^{\ast}}\big[\mu(x_{\alpha-d}^{2}+x_{d}^{2})\sum\limits^{d-1}_{k=1}(\partial_{x_{k}}\bar{v}^{\ast})^{2}+\mu(x_{d}\partial_{x_{d}}\bar{v}^{\ast})^{2}+(\lambda+\mu)(x_{d}\partial_{x_{\alpha-d}}\bar{v}^{\ast})^{2}\\
\quad\quad\;-2(\lambda+\mu)x_{\alpha-d}x_{d}\partial_{x_{\alpha-d}}\bar{v}^{\ast}\partial_{x_{d}}\bar{v}^{\ast}\big],\;\quad\alpha=d+1,...,2d-1,\\
\int_{\Omega_{R}^{\ast}}\big[\mu(x_{i_{\alpha}}^{2}+x_{j_{\alpha}}^{2})\sum\limits^{d-1}_{k=1}(\partial_{x_{k}}\bar{v}^{\ast})^{2}\\
\quad\quad\;+(\lambda+\mu)(x_{j_{\alpha}}\partial_{x_{i_{\alpha}}}\bar{v}^{\ast}-x_{i_{\alpha}}\partial_{x_{j_{\alpha}}}\bar{v}^{\ast})^{2}\big],\;\quad\alpha=2d,...,\frac{d(d+1)}{2},\;d\geq3.
\end{cases}
\end{align}
This, in combination with \eqref{KKAA123}--\eqref{con0333355KL}, gives that

$(\rm{i})$ for $\alpha=1,2,...,d$, we have
\begin{align}\label{KATZ001}
a_{11}^{\alpha\alpha}=&\mathcal{L}_{d}^{\alpha}\left(\int_{\varepsilon^{\bar{\gamma}}<|x'|<R}\frac{dx'}{h_{1}(x')-h_{2}(x')}+\int_{|x'|<\varepsilon^{\bar{\gamma}}}\frac{dx'}{\varepsilon+h_{1}(x')-h_{2}(x')}\right)\notag\\
&+M_{d}^{\ast\alpha}+O(1)\varepsilon^{\min\{\frac{1}{6},\frac{d-1}{12m}\}}.
\end{align}
On one hand, if $m\geq d-1$, then
\begin{align*}
&\int_{\varepsilon^{\bar{\gamma}}<|x'|<R}\frac{1}{h_{1}-h_{2}}+\int_{|x'|<\varepsilon^{\bar{\gamma}}}\frac{1}{\varepsilon+h_{1}-h_{2}}\notag\\
=&\int_{|x'|<R}\frac{1}{\varepsilon+h_{1}-h_{2}}+\int_{\varepsilon^{\bar{\gamma}}<|x'|<R}\frac{\varepsilon}{(h_{1}-h_{2})(\varepsilon+h_{1}-h_{2})}\notag\\
=&\int_{|x'|<R}\frac{1}{\varepsilon+h_{1}-h_{2}}+O(1)
\begin{cases}
\varepsilon,&m<\frac{d-1}{2},\\
\varepsilon|\ln\varepsilon|,&m=\frac{d-1}{2},\\
\varepsilon^{\frac{10m+d-1}{12m}},&m>\frac{d-1}{2},
\end{cases}
\end{align*}
which yields that
\begin{align}\label{LNQ001}
a_{11}^{\alpha\alpha}\lesssim&\mathcal{L}_{d}^{\alpha}\int_{|x'|<R}\frac{1}{\varepsilon+h_{1}-h_{2}}\lesssim\mathcal{L}_{d}^{\alpha}\int_{|x'|<R}\frac{1}{\varepsilon+\tau_{1}|x'|^{m}}\notag\\
\lesssim&\mathcal{L}_{d}^{\alpha}\int_{0}^{R}\frac{s^{d-2}}{\varepsilon+\tau_{1}s^{m}}\lesssim\frac{\mathcal{L}_{d}^{\alpha}}{\tau_{1}^{\frac{d-1}{m}}}\rho_{0}(d,m;\varepsilon),
\end{align}
and
\begin{align}\label{LAU001}
a_{11}^{\alpha\alpha}\gtrsim&\mathcal{L}_{d}^{\alpha}\int_{|x'|<R}\frac{1}{\varepsilon+\tau_{2}|x'|^{m}}\gtrsim\frac{\mathcal{L}_{d}^{\alpha}}{\tau_{2}^{\frac{d-1}{m}}}\rho_{0}(d,m;\varepsilon).
\end{align}
On the other hand, if $m<d-1$, then
\begin{align}\label{LNQ002}
a_{11}^{\alpha\alpha}=&\mathcal{L}_{d}^{\alpha}\left(\int_{|x'|<R}\frac{1}{h_{1}-h_{2}}-\int_{|x'|<\varepsilon^{\bar{\gamma}}}\frac{\varepsilon}{(h_{1}-h_{2})(\varepsilon+h_{1}-h_{2})}\right)\notag\\
&+M_{d}^{\ast\alpha}+O(1)\varepsilon^{\min\{\frac{1}{6},\frac{d-1}{12m}\}}\notag\\
=&\mathcal{L}_{d}^{\alpha}\int_{\Omega_{R}^{\ast}}|\partial_{x_{d}}\bar{u}_{1}^{\ast\alpha}|^{2}+M_{d}^{\ast\alpha}+O(1)\varepsilon^{\min\{\frac{1}{6},\frac{d-1-m}{12m}\}}\notag\\
=&a_{11}^{\ast\alpha\alpha}+O(1)\varepsilon^{\min\{\frac{1}{6},\frac{d-1-m}{12m}\}};
\end{align}

$(\rm{ii})$ for $\alpha=d+1,...,\frac{d(d+1)}{2}$, on one hand, if $m\geq d+1$, then
\begin{align}\label{QZH001}
a_{11}^{\alpha\alpha}=&\frac{\mathcal{L}_{d}^{\alpha}}{d-1}\left(\int_{\varepsilon^{\bar{\gamma}}<|x'|<R}\frac{|x'|^{2}}{h_{1}(x')-h_{2}(x')}+\int_{|x'|<\varepsilon^{\bar{\gamma}}}\frac{|x'|^{2}}{\varepsilon+h_{1}(x')-h_{2}(x')}\right)\nonumber\\
&+M_{d}^{\ast\alpha}+O(1)\varepsilon^{\min\{\frac{1}{6},\frac{d-1}{12 m}\}}.
\end{align}
In light of the fact that
\begin{align*}
&\int_{\varepsilon^{\bar{\gamma}}<|x'|<R}\frac{|x'|^{2}}{h_{1}-h_{2}}+\int_{|x'|<\varepsilon^{\bar{\gamma}}}\frac{|x'|^{2}}{\varepsilon+h_{1}-h_{2}}\notag\\
=&\int_{|x'|<R}\frac{|x'|^{2}}{\varepsilon+h_{1}-h_{2}}+\int_{\varepsilon^{\bar{\gamma}}<|x'|<R}\frac{\varepsilon|x'|^{2}}{(h_{1}-h_{2})(\varepsilon+h_{1}-h_{2})}\notag\\
=&\int_{|x'|<R}\frac{|x'|^{2}}{\varepsilon+h_{1}-h_{2}}+O(1)
\begin{cases}
\varepsilon,&m<\frac{d+1}{2},\\
\varepsilon|\ln\varepsilon|,&m=\frac{d+1}{2},\\
\varepsilon^{\frac{10m+d+1}{12m}},&m>\frac{d+1}{2},
\end{cases}
\end{align*}
we derive
\begin{align}\label{QN001}
a_{11}^{\alpha\alpha}\lesssim&\frac{\mathcal{L}_{d}^{\alpha}}{d-1}\int_{|x'|<R}\frac{|x'|^{2}}{\varepsilon+h_{1}(x')-h_{2}(x')}\lesssim\frac{\mathcal{L}_{d}^{\alpha}}{d-1}\int_{|x'|<R}\frac{|x'|^{2}}{\varepsilon+\tau_{1}|x'|^{m}}\notag\\
\lesssim&\mathcal{L}_{d}^{\alpha}\int_{0}^{R}\frac{s^{d}}{\varepsilon+\tau_{1}s^{m}}\lesssim\frac{\mathcal{L}_{d}^{\alpha}}{\tau_{1}^{\frac{d+1}{m}}}\rho_{2}(d,m;\varepsilon),
\end{align}
and
\begin{align}\label{QN002}
a_{11}^{\alpha\alpha}\gtrsim&\frac{\mathcal{L}_{d}^{\alpha}}{d-1}\int_{|x'|<R}\frac{|x'|^{2}}{\varepsilon+\tau_{2}|x'|^{m}}\gtrsim\frac{\mathcal{L}_{d}^{\alpha}}{\tau_{2}^{\frac{d+1}{m}}}\rho_{2}(d,m;\varepsilon).
\end{align}
On the other hand, if $m<d+1$, then for $\alpha=d+1,...,2d-1$,
\begin{align}\label{QZH002}
a_{11}^{\alpha\alpha}=&\mathcal{L}_{d}^{\alpha}\left(\int_{\varepsilon^{\bar{\gamma}}<|x'|<R}\frac{x_{\alpha-d}^{2}}{h_{1}-h_{2}}+\int_{|x'|<\varepsilon^{\bar{\gamma}}}\frac{x_{\alpha-d}^{2}}{\varepsilon+h_{1}-h_{2}}\right)+M_{d}^{\ast\alpha}+O(1)\varepsilon^{\min\{\frac{1}{6},\frac{d-1}{12 m}\}}\notag\\
=&\mathcal{L}_{d}^{\alpha}\left(\int_{|x'|<R}\frac{x_{\alpha-d}^{2}}{h_{1}-h_{2}}-\int_{|x'|<\varepsilon^{\bar{\gamma}}}\frac{\varepsilon x_{\alpha-d}^{2}}{(h_{1}-h_{2})(\varepsilon+h_{1}-h_{2})}\right)\notag\\
&+M_{d}^{\ast\alpha}+O(1)\varepsilon^{\min\{\frac{1}{6},\frac{d-1}{12 m}\}}\notag\\
=&\mathcal{L}_{d}^{\alpha}\int_{\Omega_{R}^{\ast}}|x_{\alpha-d}\partial_{x_{d}}\bar{v}^{\ast}|^{2}+M_{d}^{\ast\alpha}+O(1)\varepsilon^{\min\{\frac{1}{6},\frac{d+1-m}{12 m}\}}\notag\\
=&a_{11}^{\ast\alpha\alpha}+O(1)\varepsilon^{\min\{\frac{1}{6},\frac{d+1-m}{12 m}\}},
\end{align}
while, for $\alpha=2d,...,\frac{d(d+1)}{2},\,d\geq3$,
\begin{align*}
a_{11}^{\alpha\alpha}=&\frac{\mathcal{L}_{d}^{\alpha}}{2}\left(\int_{\varepsilon^{\bar{\gamma}}<|x'|<R}\frac{x_{i_{\alpha}}^{2}+x_{j_{\alpha}}^{2}}{h_{1}-h_{2}}+\int_{|x'|<\varepsilon^{\bar{\gamma}}}\frac{x_{i_{\alpha}}^{2}+x_{j_{\alpha}}^{2}}{\varepsilon+h_{1}-h_{2}}\right)\notag\\
&+M_{d}^{\ast\alpha}+O(1)\varepsilon^{\min\{\frac{1}{6},\frac{d-1}{12m}\}}\\
=&\frac{\mathcal{L}_{d}^{\alpha}}{2}\int_{|x'|<R}\frac{x_{i_{\alpha}}^{2}+x_{j_{\alpha}}^{2}}{h_{1}-h_{2}}+M_{d}^{\ast\alpha}+O(1)\varepsilon^{\min\{\frac{1}{6},\frac{d+1-m}{12m}\}}\\
=&\frac{\mathcal{L}_{d}^{\alpha}}{2}\int_{\Omega_{R}^{\ast}}(x_{i_{\alpha}}^{2}+x_{j_{\alpha}}^{2})|\partial_{x_{d}}\bar{v}^{\ast}|^{2}+M_{d}^{\ast\alpha}+O(1)\varepsilon^{\min\{\frac{1}{6},\frac{d+1-m}{12m}\}}\\
=&a_{11}^{\ast\alpha\alpha}+O(1)\varepsilon^{\min\{\frac{1}{6},\frac{d+1-m}{12 m}\}}.
\end{align*}
This, in combination with \eqref{LNQ001}--\eqref{LNQ002} and \eqref{QN001}--\eqref{QZH002}, yields that \eqref{LMC}--\eqref{LMC1} hold.

\noindent{\bf Step 2.} Proofs of \eqref{M001}--\eqref{M005}. Because of symmetry, it is enough to consider the case of $\alpha<\beta$. Similarly as above, for $\alpha,\beta=1,2,...,\frac{d(d+1)}{2}$, $\alpha\neq\beta,$ we decompose $a_{11}^{\alpha\beta}$ as follows:
\begin{align*}
a_{11}^{\alpha\beta}=&\int_{\Omega\setminus\Omega_{R}}(\mathbb{C}^{0}e(v_{1}^{\alpha}),e(v_{1}^{\beta}))+\int_{\Omega_{R}\setminus\Omega_{\varepsilon^{\bar{\gamma}}}}(\mathbb{C}^{0}e(v_{1}^{\alpha}),e(v_{1}^{\beta}))+\int_{\Omega_{\varepsilon^{\bar{\gamma}}}}(\mathbb{C}^{0}e(v_{1}^{\alpha}),e(v_{1}^{\beta}))\nonumber\\
=&:\mathrm{I}+\mathrm{II}+\mathrm{III},
\end{align*}
where $\bar{\gamma}=\frac{1}{12m}$. To begin with, following the same argument as in \eqref{KKAA123}, we obtain
\begin{align}\label{KKAA1233333}
\mathrm{I}=&\int_{D\setminus(D_{1}\cup D_{1}^{\ast}\cup D_{2}\cup\Omega_{R})}(\mathbb{C}^{0}e(v_{1}^{\alpha}),e(v_{1}^{\beta}))+O(1)\varepsilon\notag\\
=&\int_{D\setminus(D_{1}\cup D_{1}^{\ast}\cup D_{2}\cup\Omega_{R})}\big[(\mathbb{C}^{0}e(v_{1}^{\ast\alpha}),e(v_{1}^{\ast\beta}))+(\mathbb{C}^{0}e(v_{1}^{\alpha}-v_{1}^{\ast\alpha}),e(v_{1}^{\beta}-v_{1}^{\ast\beta}))\big]\notag\\
&+\int_{D\setminus(D_{1}\cup D_{1}^{\ast}\cup D_{2}\cup\Omega_{R})}\big[(\mathbb{C}^{0}e(v_{1}^{\ast\alpha}),e(v_{1}^{\beta}-v_{1}^{\ast\beta}))+(\mathbb{C}^{0}e(v_{1}^{\alpha}-v_{1}^{\ast\alpha}),e(v_{1}^{\ast\beta}))\big]\notag\\
=&\int_{\Omega^{\ast}\setminus\Omega^{\ast}_{R}}(\mathbb{C}^{0}e(v_{1}^{\ast\alpha}),e(v_{1}^{\ast\beta}))+O(1)\varepsilon^{\frac{1}{6}}.
\end{align}

For the second term $\mathrm{II}$, we split it as follows:
\begin{align*}
\mathrm{II}_{1}=&\int_{(\Omega_{R}\setminus\Omega_{\varepsilon^{\bar{\gamma}}})\setminus(\Omega^{\ast}_{R}\setminus\Omega^{\ast}_{\varepsilon^{\bar{\gamma}}})}(\mathbb{C}^{0}e(v_{1}^{\alpha}),e(v_{1}^{\beta}))+\int_{\Omega^{\ast}_{R}\setminus\Omega^{\ast}_{\varepsilon^{\bar{\gamma}}}}(\mathbb{C}^{0}e(v_{1}^{\ast\alpha}),e(v_{1}^{\beta}-v_{1}^{\ast\beta}))\notag\\
&+\int_{\Omega^{\ast}_{R}\setminus\Omega^{\ast}_{\varepsilon^{\bar{\gamma}}}}(\mathbb{C}^{0}e(v_{1}^{\alpha}-v_{1}^{\ast\alpha}),e(v_{1}^{\ast\beta}))+\int_{\Omega^{\ast}_{R}\setminus\Omega^{\ast}_{\varepsilon^{\bar{\gamma}}}}(\mathbb{C}^{0}e(v_{1}^{\alpha}-v_{1}^{\ast\alpha}),e(v_{1}^{\beta}-v_{1}^{\ast\beta})),\\
\mathrm{II}_{2}=&\int_{\Omega^{\ast}_{R}\setminus\Omega^{\ast}_{\varepsilon^{\bar{\gamma}}}}(\mathbb{C}^{0}e(v_{1}^{\ast\alpha}),e(v_{1}^{\ast\beta})).
\end{align*}
In view of the fact that the thickness of $(\Omega_{R}\setminus\Omega_{\varepsilon^{\bar{\gamma}}})\setminus(\Omega^{\ast}_{R}\setminus\Omega^{\ast}_{\varepsilon^{\bar{\gamma}}})$ is $\varepsilon$, it follows from \eqref{Le2.025} and \eqref{con035} that
\begin{align}\label{con036}
\mathrm{II}_{1}=O(1)\varepsilon^{\frac{1}{6}}.
\end{align}
With regard to $\mathrm{II}_{2}$, we further discuss as follows:

{\bf case 1.} if $d=2$, $\alpha=1,\beta=2$, then it follows from \eqref{GZIJ} that
\begin{align}\label{PAHN}
\mathrm{II}_{2}=&\int_{\Omega_{R}^{\ast}\setminus\Omega^{\ast}_{\varepsilon^{\bar{\gamma}}}}(\mathbb{C}^{0}e(\bar{u}_{1}^{\ast1}),e(\bar{u}_{1}^{\ast 2}))+\int_{\Omega_{R}^{\ast}\setminus\Omega^{\ast}_{\varepsilon^{\bar{\gamma}}}}(\mathbb{C}^{0}e(v_{1}^{\ast1}-\bar{u}_{1}^{\ast1}),e(v_{1}^{\ast 2}-\bar{u}_{1}^{\ast2}))\notag\\
&+\int_{\Omega_{R}^{\ast}\setminus\Omega^{\ast}_{\varepsilon^{\bar{\gamma}}}}(\mathbb{C}^{0}e(v_{1}^{\ast1}-\bar{u}_{1}^{\ast1}),e(\bar{u}_{1}^{\ast2}))+\int_{\Omega_{R}^{\ast}\setminus\Omega^{\ast}_{\varepsilon^{\bar{\gamma}}}}(\mathbb{C}^{0}e(\bar{u}_{1}^{\ast1}),e(v_{1}^{\ast2}-\bar{u}_{1}^{\ast2}))\notag\\
=&\int_{\Omega_{R}^{\ast}\setminus\Omega^{\ast}_{\varepsilon^{\bar{\gamma}}}}(\lambda+\mu)\partial_{x_{1}}\bar{v}^{\ast}\partial_{x_{2}}\bar{v}^{\ast}+O(1)\notag\\
=&O(1)|\ln\varepsilon|;
\end{align}

{\bf case 2.} if $d\geq3,\,\alpha,\beta=1,2,...,d,\,\alpha<\beta$, if $d\geq2,\alpha=1,2,...,d,\,\beta=d+1,...,\frac{d(d+1)}{2},\alpha<\beta$, or if $d\geq3,\,\alpha,\beta=d+1,...,\frac{d(d+1)}{2},\,\alpha<\beta$, then we deduce from \eqref{GZIJ} again that
\begin{align}\label{QKL}
&\mathrm{II}_{2}-\int_{\Omega^{\ast}_{R}}(\mathbb{C}^{0}e(v_{1}^{\ast\alpha}),e(v_{1}^{\ast\beta}))=-\int_{\Omega^{\ast}_{\varepsilon^{\bar{\gamma}}}}(\mathbb{C}^{0}e(v_{1}^{\ast\alpha}),e(v_{1}^{\ast\beta}))\notag\\
=&\int_{\Omega^{\ast}_{\varepsilon^{\bar{\gamma}}}}\big[(\mathbb{C}^{0}e(\bar{u}_{1}^{\ast\alpha}),e(\bar{u}_{1}^{\ast\beta}))+(\mathbb{C}^{0}e(v_{1}^{\ast\alpha}-\bar{u}_{1}^{\ast\alpha}),e(v_{1}^{\ast\beta}-\bar{u}_{1}^{\ast\beta}))\notag\\
&\quad\quad\quad+(\mathbb{C}^{0}e(\bar{u}_{1}^{\ast\alpha}),e(v_{1}^{\ast\beta}-\bar{u}_{1}^{\ast\beta}))+(\mathbb{C}^{0}e(v_{1}^{\ast\alpha}-\bar{u}_{1}^{\ast\alpha}),e(\bar{u}_{1}^{\ast\beta}))\big]\notag\\
=&\int_{\Omega^{\ast}_{\varepsilon^{\bar{\gamma}}}}(\mathbb{C}^{0}e(\psi_{\alpha}\bar{v}^{\ast}),e(\psi_{\beta}\bar{v}^{\ast}))\notag\\
&+O(1)
\begin{cases}
\varepsilon^{(d-1)\bar{\gamma}},&\alpha=1,2,...,d,\,\beta=1,2,...,\frac{d(d+1)}{2},\,\alpha<\beta,\\
\varepsilon^{d\bar{\gamma}},&\alpha,\beta=d+1,...,\frac{d(d+1)}{2},\,\alpha<\beta,
\end{cases}
\end{align}
where we used the fact that
\begin{align*}
(\mathbb{C}^{0}e(\bar{u}_{1}^{\ast\alpha}),e(\bar{u}_{1}^{\ast\beta}))=&(\mathbb{C}^{0}e(\psi_{\alpha}\bar{v}^{\ast}),e(\psi_{\beta}\bar{v}^{\ast}))+(\mathbb{C}^{0}e(\mathcal{F}_{\alpha}^{\ast}),e(\psi_{\beta}\bar{v}^{\ast}))\notag\\
&+(\mathbb{C}^{0}e(\psi_{\alpha}\bar{v}^{\ast}),e(\mathcal{F}_{\beta}^{\ast}))+(\mathbb{C}^{0}e(\mathcal{F}_{\alpha}^{\ast}),e(\mathcal{F}_{\beta}^{\ast})).
\end{align*}
Let $E_{\alpha\beta}(\bar{v}^{\ast})=(\mathbb{C}^{0}e(\psi_{\alpha}\bar{v}^{\ast}),e(\psi_{\beta}\bar{v}^{\ast}))$. Straightforward computations give that

$(1)$ for $\alpha,\beta=1,2,...,d,$ $\alpha<\beta$,
\begin{align}\label{ZH0000}
E_{\alpha\beta}(\bar{v}^{\ast})=(\lambda+\mu)\partial_{x_{\alpha}}\bar{v}^{\ast}\partial_{x_{\beta}}\bar{v}^{\ast};
\end{align}

$(2)$ for $\alpha=1,2,...,d$, $\beta=d+1,...,\frac{d(d+1)}{2}$, there exist two indices $1\leq i_{\beta}<j_{\beta}\leq d$ such that
$\psi_{\beta}\bar{v}^{\ast}=(0,...,0,x_{j_{\beta}}\bar{v}^{\ast},0,...,0,-x_{i_{\beta}}\bar{v}^{\ast},0,...,0)$. If $i_{\beta}\neq\alpha,\,j_{\beta}\neq\alpha$, then
\begin{align}\label{ZH000}
E_{\alpha\beta}(\bar{v}^{\ast})=(\lambda+\mu)\partial_{x_{\alpha}}\bar{v}^{\ast}(x_{j_{\beta}}\partial_{i_{\beta}}\bar{v}^{\ast}-x_{i_{\beta}}\partial_{x_{j_{\beta}}}\bar{v}^{\ast}),
\end{align}
and if $i_{\beta}=\alpha,\,j_{\beta}\neq\alpha$, then
\begin{align}\label{ZH001}
E_{\alpha\beta}(\bar{v}^{\ast})=\mu x_{j_{\beta}}\sum^{d}_{k=1}(\partial_{x_{k}}\bar{v}^{\ast})^{2}+(\lambda+\mu)\partial_{x_{\alpha}}\bar{v}^{\ast}(x_{j_{\beta}}\partial_{i_{\beta}}\bar{v}^{\ast}-x_{i_{\beta}}\partial_{x_{j_{\beta}}}\bar{v}^{\ast}),
\end{align}
and if $i_{\beta}\neq\alpha,\,j_{\beta}=\alpha$, then
\begin{align}\label{ZH002}
E_{\alpha\beta}(\bar{v}^{\ast})=&-\mu x_{i_{\beta}}\sum^{d}_{k=1}(\partial_{x_{k}}\bar{v}^{\ast})^{2}+(\lambda+\mu)\partial_{x_{\alpha}}\bar{v}^{\ast}(x_{j_{\beta}}\partial_{i_{\beta}}\bar{v}^{\ast}-x_{i_{\beta}}\partial_{x_{j_{\beta}}}\bar{v}^{\ast});
\end{align}

$(3)$ for $\alpha,\beta=d+1,...,\frac{d(d+1)}{2}$, $\alpha<\beta$, there exist four indices $1\leq i_{\alpha}<j_{\alpha}\leq d$ and $1\leq i_{\beta}<j_{\beta}\leq d$ such that $\psi_{\alpha}\bar{v}^{\ast}=(0,...,0,x_{j_{\alpha}}\bar{v}^{\ast},0,...,0,-x_{i_{\alpha}}\bar{v}^{\ast},0,...,0)$
and
$\psi_{\beta}\bar{v}^{\ast}=(0,...,0,x_{j_{\beta}}\bar{v}^{\ast},0,...,0,-x_{i_{\beta}}\bar{v}^{\ast},0,...,0)$. In light of $\alpha<\beta$, we know that $j_{\beta}\leq j_{\alpha}$. If $i_{\alpha}\neq i_{\beta},\,j_{\alpha}\neq j_{\beta},\,i_{\alpha}\neq j_{\beta}$, then
\begin{align}\label{ZH003}
E_{\alpha\beta}(\bar{v}^{\ast})=(\lambda+\mu)(x_{j_{\alpha}}\partial_{x_{i_{\alpha}}}\bar{v}^{\ast}-x_{i_{\alpha}}\partial_{x_{j_{\alpha}}}\bar{v}^{\ast})(x_{j_{\beta}}\partial_{x_{i_{\beta}}}\bar{v}^{\ast}-x_{i_{\beta}}\partial_{x_{j_{\beta}}}\bar{v}^{\ast}),
\end{align}
and if $i_{\alpha}=i_{\beta},\,j_{\alpha}\neq j_{\beta}$, then
\begin{align}\label{ZH004}
E_{\alpha\beta}(\bar{v}^{\ast})=&\mu x_{j_{\alpha}}x_{j_{\beta}}\sum^{d}_{k=1}(\partial_{x_{k}}\bar{v}^{\ast})^{2}\notag\\
&+(\lambda+\mu)(x_{j_{\alpha}}\partial_{x_{i_{\alpha}}}\bar{v}^{\ast}-x_{i_{\alpha}}\partial_{x_{j_{\alpha}}}\bar{v}^{\ast})(x_{j_{\beta}}\partial_{x_{i_{\beta}}}\bar{v}^{\ast}-x_{i_{\beta}}\partial_{x_{j_{\beta}}}\bar{v}^{\ast}),
\end{align}
and if $i_{\alpha}\neq i_{\beta},\,j_{\alpha}=j_{\beta}$, then
\begin{align}\label{ZH005}
E_{\alpha\beta}(\bar{v}^{\ast})=&\mu x_{i_{\alpha}}x_{i_{\beta}}\sum^{d}_{k=1}(\partial_{x_{k}}\bar{v}^{\ast})^{2}\notag\\
&+(\lambda+\mu)(x_{j_{\alpha}}\partial_{x_{i_{\alpha}}}\bar{v}^{\ast}-x_{i_{\alpha}}\partial_{x_{j_{\alpha}}}\bar{v}^{\ast})(x_{j_{\beta}}\partial_{x_{i_{\beta}}}\bar{v}^{\ast}-x_{i_{\beta}}\partial_{x_{j_{\beta}}}\bar{v}^{\ast}),
\end{align}
and if $i_{\beta}<j_{\beta}=i_{\alpha}<j_{\alpha}$, then
\begin{align}\label{ZH006}
E_{\alpha\beta}(\bar{v}^{\ast})=&-\mu x_{i_{\beta}}x_{j_{\alpha}}\sum^{d}_{k=1}(\partial_{x_{k}}\bar{v}^{\ast})^{2}\notag\\
&+(\lambda+\mu)(x_{j_{\alpha}}\partial_{x_{i_{\alpha}}}\bar{v}^{\ast}-x_{i_{\alpha}}\partial_{x_{j_{\alpha}}}\bar{v}^{\ast})(x_{j_{\beta}}\partial_{x_{i_{\beta}}}\bar{v}^{\ast}-x_{i_{\beta}}\partial_{x_{j_{\beta}}}\bar{v}^{\ast}).
\end{align}

Therefore, due to the fact that
\begin{align*}
\left|\int^{h_{1}(x')}_{h_{2}(x')}x_{d}\,dx_{d}\right|\leq\frac{1}{2}|(h_{1}+h_{2})(x')(h_{1}-h_{2})(x')|\leq C|x'|^{2m},\quad \mathrm{in}\; B'_{R},
\end{align*}
it follows from \eqref{QKL}--\eqref{ZH006}, the parity of integrand and the symmetry of integral region that
\begin{align*}
&\mathrm{II}_{2}-\int_{\Omega^{\ast}_{R}}(\mathbb{C}^{0}e(v_{1}^{\ast\alpha}),e(v_{1}^{\ast\beta}))\notag\\
=&O(1)
\begin{cases}
\varepsilon^{\frac{d-2}{12m}},&d\geq3,\,\alpha,\beta=1,2,...,d,\,\alpha<\beta,\\
\varepsilon^{\frac{d-1}{12m}},&d\geq2,\,\alpha=1,2,...,d,\,\beta=d+1,...,\frac{d(d+1)}{2},\\
\varepsilon^{\frac{d}{12m}},&d\geq3,\,\alpha,\beta=d+1,...,\frac{d(d+1)}{2},\,\alpha<\beta.
\end{cases}
\end{align*}
This, in combination with \eqref{con036}--\eqref{PAHN}, yields that
\begin{align}\label{FATL001}
\mathrm{II}=&O(1)|\ln\varepsilon|,\quad d=2,\alpha=1,\beta=2,
\end{align}
and
\begin{align}\label{FATL002}
&\mathrm{II}-\int_{\Omega^{\ast}_{R}}(\mathbb{C}^{0}e(v_{1}^{\ast\alpha}),e(v_{1}^{\ast\beta}))\notag\\
=&O(1)
\begin{cases}
\varepsilon^{\min\{\frac{1}{6},\frac{d-1}{12m}\}},&d\geq3,\alpha,\beta=1,2,...,d,\,\alpha<\beta\\
\varepsilon^{\min\{\frac{1}{6},\frac{d-2}{12m}\}},&d\geq2,\,\alpha=1,2,...,d,\,\beta=d+1,...,\frac{d(d+1)}{2},\\
\varepsilon^{\min\{\frac{1}{6},\frac{d}{12m}\}},&d\geq3,\,\alpha,\beta=d+1,...,\frac{d(d+1)}{2},\,\alpha<\beta.
\end{cases}
\end{align}

In exactly the same way as in \eqref{QKL}, we obtain
\begin{align}\label{KKTA}
\mathrm{III}=&\int_{\Omega_{\varepsilon^{\bar{\gamma}}}}(\mathbb{C}^{0}e(v_{1}^{\alpha}),e(v_{1}^{\beta}))\notag\\
=&\int_{\Omega_{\varepsilon^{\bar{\gamma}}}}(\mathbb{C}^{0}e(\psi_{\alpha}\bar{v}),e(\psi_{\beta}\bar{v}))\notag\\
&+O(1)
\begin{cases}
\varepsilon^{(d-1)\bar{\gamma}},&\alpha=1,2,...,d,\,\beta=1,2,...,\frac{d(d+1)}{2},\,\alpha<\beta,\\
\varepsilon^{d\bar{\gamma}},&\alpha,\beta=d+1,...,\frac{d(d+1)}{2},\,\alpha<\beta.
\end{cases}
\end{align}
Using \eqref{ZH0000}--\eqref{ZH006} with $\bar{v}^{\ast}$ replaced by $\bar{v}$, it follows from \eqref{KKTA} that
\begin{align*}
\mathrm{III}=&O(1)
\begin{cases}
|\ln\varepsilon|,&d=2,\,\alpha=1,\beta=2,\\
\varepsilon^{\frac{d-2}{12m}},&d\geq3,\,\alpha,\beta=1,2,...,d,\,\alpha<\beta,\\
\varepsilon^{\frac{d-1}{12m}},&d\geq2,\,\alpha=1,2,...,d,\,\beta=d+1,...,\frac{d(d+1)}{2},\\
\varepsilon^{\frac{d}{12m}},&d\geq3,\,\alpha,\beta=d+1,...,\frac{d(d+1)}{2},\,\alpha<\beta,
\end{cases}
\end{align*}
which, together with \eqref{KKAA1233333} and \eqref{FATL001}--\eqref{FATL002}, leads to that
\begin{align*}
a_{11}^{12}=O(1)|\ln\varepsilon|,\quad d=2,
\end{align*}
and
\begin{align*}
&a_{11}^{\alpha\beta}-a_{11}^{\ast\alpha\beta}\notag\\
=&O(1)
\begin{cases}
\varepsilon^{\min\{\frac{1}{6},\frac{d-2}{12m}\}},&d\geq3,\alpha,\beta=1,2,...,d,\,\alpha<\beta,\\
\varepsilon^{\min\{\frac{1}{6},\frac{d-2}{12m}\}},&d\geq2,\,\alpha=1,2,...,d,\,\beta=d+1,...,\frac{d(d+1)}{2},\\
\varepsilon^{\min\{\frac{1}{6},\frac{d}{12m}\}},&d\geq3,\,\alpha,\beta=d+1,...,\frac{d(d+1)}{2},\,\alpha<\beta.
\end{cases}
\end{align*}
This completes the proofs of \eqref{M001}--\eqref{M005}.

\noindent{\bf Step 3.} Proof of \eqref{AZQ001}. Note that for $\alpha=1,2,...,\frac{d(d+1)}{2}$, $v_{1}^{\alpha}+v_{2}^{\alpha}-v_{1}^{\ast\alpha}-v_{2}^{\ast\alpha}$ satisfies
\begin{align*}
\begin{cases}
\mathcal{L}_{\lambda,\mu}(v_{1}^{\alpha}+v_{2}^{\alpha}-v_{1}^{\ast\alpha}-v_{2}^{\ast\alpha})=0,&\mathrm{in}\;\,D\setminus(\overline{D_{1}\cup D_{1}^{\ast}\cup D_{2}}),\\
v_{1}^{\alpha}+v_{2}^{\alpha}-v_{1}^{\ast\alpha}-v_{2}^{\ast\alpha}=\psi_{\alpha}-v_{1}^{\ast\alpha}-v_{2}^{\ast\alpha},&\mathrm{on}\;\,\partial D_{1}\setminus D_{1}^{\ast},\\
v_{1}^{\alpha}+v_{2}^{\alpha}-v_{1}^{\ast\alpha}-v_{2}^{\ast\alpha}=v_{1}^{\alpha}+v_{2}^{\alpha}-\psi_{\alpha},&\mathrm{on}\;\,\partial D_{1}^{\ast}\setminus(D_{1}\cup\{0\}),\\
v_{1}^{\alpha}+v_{2}^{\alpha}-v_{1}^{\ast\alpha}-v_{2}^{\ast\alpha}=0,&\mathrm{on}\;\,\partial D_{2}\cup\partial D.
\end{cases}
\end{align*}
Analogously as above, using the standard boundary and interior estimates for elliptic systems, we deduce that for $x\in\partial D_{1}\setminus D_{1}^{\ast}$,
\begin{align}\label{LAQ007}
&|(v_{1}^{\alpha}+v_{2}^{\alpha}-v_{1}^{\ast\alpha}-v_{2}^{\ast\alpha})(x',x_{d})|\notag\\
=&|(v_{1}^{\ast\alpha}+v_{2}^{\ast\alpha})(x',x_{d}-\varepsilon)-(v_{1}^{\ast\alpha}+v_{2}^{\ast\alpha})(x',x_{d})|\leq C\varepsilon.
\end{align}
A consequence of Corollary \ref{coro00z} gives that for $x\in\partial D_{1}^{\ast}\setminus(D_{1}\cup\mathcal{C}_{\varepsilon^{\frac{1}{m}}})$,
\begin{align}\label{LAQ008}
&|(v_{1}^{\alpha}+v_{2}^{\alpha}-v_{1}^{\ast\alpha}-v_{2}^{\ast\alpha})(x',x_{d})|\notag\\
=&|(v_{1}^{\alpha}+v_{2}^{\alpha})(x',x_{d})-(v_{1}^{\alpha}+v_{2}^{\alpha})(x',x_{d}+\varepsilon)|\leq C\varepsilon.
\end{align}
Since $v_{1}^{\alpha}+v_{2}^{\alpha}-v_{1}^{\ast\alpha}-v_{2}^{\ast\alpha}=0$ on $\partial D_{2}$, we infer from Corollary \ref{coro00z} again that for $x\in\Omega_{R}^{\ast}\cap\{|x'|=\varepsilon^{\frac{1}{m}}\}$,
\begin{align*}
&|(v_{1}^{\alpha}+v_{2}^{\alpha}-v_{1}^{\ast\alpha}-v_{2}^{\ast\alpha})(x',x_{d})|\notag\\
=&|(v_{1}^{\alpha}+v_{2}^{\alpha}-v_{1}^{\ast\alpha}-v_{2}^{\ast\alpha})(x',x_{d})-(v_{1}^{\alpha}+v_{2}^{\alpha}-v_{1}^{\ast\alpha}-v_{2}^{\ast\alpha})(x',h_{2}(x'))|\notag\\ \leq& C\delta^{1-1/m}\varepsilon\leq C\varepsilon^{2-1/m},
\end{align*}
where in the last line the fact that the exponential function decays faster than the power function was utilized. This, in combination with \eqref{LAQ007}--\eqref{LAQ008}, reads that
\begin{align}\label{MIH01}
|v_{1}^{\alpha}+v_{2}^{\alpha}-v_{1}^{\ast\alpha}-v_{2}^{\ast\alpha}|\leq C\varepsilon,\quad\;\,\mathrm{on}\;\,\partial \big(D\setminus\big(\overline{D_{1}\cup D_{1}^{\ast}\cup D_{2}\cup\mathcal{C}_{\varepsilon^{\frac{1}{m}}}}\big)\big).
\end{align}
Analogous to \eqref{con035}, it follows from \eqref{MIH01}, the maximum principle, the rescale argument, the interpolation inequality and the standard elliptic estimates that
\begin{align}\label{ZQWZW001}
|\nabla(v_{1}^{\alpha}+v_{2}^{\alpha}-v_{1}^{\ast\alpha}-v_{2}^{\ast\alpha})|\leq C\varepsilon^{\frac{1}{4}},\quad\mathrm{in}\;\,D\setminus\big(\overline{D_{1}\cup D_{1}^{\ast}\cup D_{2}\cup\mathcal{C}_{\varepsilon^{\frac{1}{4m}}}}\big).
\end{align}

Let $\tilde{\gamma}=\frac{1}{4m}$. Proceeding as above, we decompose $\sum\limits^{2}_{i=1}a_{i1}^{\alpha\beta}$ into the following three parts:
\begin{align*}
\mathrm{I}=&\int_{\Omega\setminus\Omega_{R}}(\mathbb{C}^{0}e(v_{1}^{\alpha}+v_{2}^{\alpha}),e(v_{1}^{\beta})),\notag\\
\mathrm{II}=&\int_{\Omega_{\varepsilon^{\tilde{\gamma}}}}(\mathbb{C}^{0}e(v_{1}^{\alpha}+v_{2}^{\alpha}),e(v_{1}^{\beta})),\notag\\
\mathrm{III}=&\int_{\Omega_{R}\setminus\Omega_{\varepsilon^{\tilde{\gamma}}}}(\mathbb{C}^{0}e(v_{1}^{\alpha}+v_{2}^{\alpha}),e(v_{1}^{\beta})).
\end{align*}

To begin with, using the same arguments as in \eqref{KKAA123}, we deduce from \eqref{ZQWZW001} that
\begin{align}\label{GAZ0011}
\mathrm{I}=\int_{\Omega^{\ast}\setminus\Omega^{\ast}_{R}}(\mathbb{C}^{0}e(v_{1}^{\ast\alpha}+v_{2}^{\ast\alpha}),e(v_{1}^{\ast\beta}))+O(1)\varepsilon^{\frac{1}{4}}.
\end{align}
Second, an immediate consequence of Proposition \ref{thm86} and Corollary \ref{coro00z} gives that
\begin{align}\label{GAZ001}
|\mathrm{II}|\leq \int_{|x'|\leq \varepsilon^{\tilde{\gamma}}}C(\varepsilon+|x'|^{m})^{1-1/m}\leq C\varepsilon^{\frac{m+d-2}{4m}}.
\end{align}
With regard to the third term $\mathrm{III}$, it can be further split as follows:
\begin{align*}
\mathrm{III}_{1}=&\int_{(\Omega_{R}\setminus\Omega_{\varepsilon^{\tilde{\gamma}}})\setminus(\Omega^{\ast}_{R}\setminus\Omega^{\ast}_{\varepsilon^{\tilde{\gamma}}})}(\mathbb{C}^{0}e(v_{1}^{\alpha}+v_{2}^{\alpha}),e(v_{1}^{\beta})),\\
\mathrm{III}_{2}=&\int_{\Omega^{\ast}_{R}\setminus\Omega^{\ast}_{\varepsilon^{\tilde{\gamma}}}}\sum^{2}_{i=1}\Big[(\mathbb{C}^{0}e(v_{i}^{\alpha}-v_{i}^{\ast\alpha}),e(v_{1}^{\ast\beta}))+(\mathbb{C}^{0}e(v_{i}^{\ast\alpha}),e(v_{1}^{\beta}-v_{1}^{\ast\beta}))\notag\\
&\quad\quad\quad\quad\quad\quad+(\mathbb{C}^{0}e(v_{i}^{\alpha}-v_{i}^{\ast\alpha}),e(v_{1}^{\beta}-v_{1}^{\ast\beta}))\Big],\\
\mathrm{III}_{3}=&\int_{\Omega^{\ast}_{R}\setminus\Omega^{\ast}_{\varepsilon^{\tilde{\gamma}}}}(\mathbb{C}^{0}e(v_{1}^{\ast\alpha}+v_{2}^{\ast\alpha}),e(v_{1}^{\ast\beta})).
\end{align*}
Making use of Proposition \ref{thm86} and Corollary \ref{coro00z} again, we arrive at
\begin{align}\label{GAZ00195}
|\mathrm{III}_{1}|\leq \int_{\varepsilon^{\tilde{\gamma}}\leq |x'|\leq R}\frac{C\varepsilon(\varepsilon+|x'|^{m})^{1-1/m}}{|x'|^{m}}\leq C
\begin{cases}
\varepsilon|\ln\varepsilon|,&d=2,\\
\varepsilon,&d\geq3.
\end{cases}
\end{align}
By \eqref{ZQWZW001}, we get
\begin{align}\label{GAP001}
|\mathrm{III}_{2}|\leq C\varepsilon^{\frac{1}{4}}.
\end{align}
For the last term $\mathrm{III}_{3}$, it follows from Corollary \ref{coro00z} and \eqref{GZIJ} that
\begin{align}\label{HMGD001}
\mathrm{III}_{3}=&\int_{\Omega^{\ast}_{R}}(\mathbb{C}^{0}e(v_{1}^{\ast\alpha}+v_{2}^{\ast\alpha}),e(v_{1}^{\ast\beta}))-\int_{\Omega^{\ast}_{\varepsilon^{\tilde{\gamma}}}}(\mathbb{C}^{0}e(v_{1}^{\ast\alpha}+v_{2}^{\ast\alpha}),e(v_{1}^{\ast\beta}))\notag\\
=&\int_{\Omega^{\ast}_{R}}(\mathbb{C}^{0}e(v_{1}^{\ast\alpha}+v_{2}^{\ast\alpha}),e(v_{1}^{\ast\beta}))+O(1)\varepsilon^{\frac{m+d-2}{2m}}.
\end{align}
From \eqref{GAZ00195}--\eqref{HMGD001}, we obtain
\begin{align*}
\mathrm{III}=&\int_{\Omega^{\ast}_{R}}(\mathbb{C}^{0}e(v_{1}^{\ast\alpha}+v_{2}^{\ast\alpha}),e(v_{1}^{\ast\beta}))+O(1)\varepsilon^{\frac{1}{4}},
\end{align*}
which, together with \eqref{GAZ0011}--\eqref{GAZ001}, reads that
\begin{align*}
\sum\limits^{2}_{i=1}a_{i1}^{\alpha\beta}=\sum\limits^{2}_{i=1}a_{i1}^{\ast\alpha\beta}+O(1)\varepsilon^{\frac{1}{4}}.
\end{align*}
In exactly the same way, we also have
\begin{align*}
\sum\limits^{2}_{j=1}a_{1j}^{\alpha\beta}=\sum\limits^{2}_{j=1}a_{1j}^{*\alpha\beta}+O(\varepsilon^{\frac{1}{4}}).
\end{align*}
Consequently, we complete the proof of \eqref{AZQ001}.

\end{proof}

From \eqref{AST123}, we see that for $\beta=1,2,...,\frac{d(d+1)}{2}$,
\begin{align}\label{GCT}
|v_{1}^{\beta}-v_{1}^{\ast\beta}|\leq C\varepsilon^{\frac{1}{2}},\quad\mathrm{in}\;D\setminus\big(\overline{D_{1}\cup D_{1}^{\ast}\cup D_{2}\cup\mathcal{C}_{\varepsilon^{\frac{1}{2m}}}}\big).
\end{align}
Then applying the standard boundary estimates with \eqref{GCT}, we get
\begin{align*}
|\nabla(v_{1}^{\beta}-v_{1}^{\ast\beta})|\leq C\varepsilon^{\frac{1}{2}},\quad\mathrm{on}\;\partial D.
\end{align*}
Consequently,
\begin{align*}
|b_1^{\beta}-b_{1}^{\ast\beta}|\leq\left|\int_{\partial D}\frac{\partial(v_{1}^{\beta}-v_{1}^{\ast\beta})}{\partial \nu_0}\large\Big|_{+}\cdot\varphi\right|\leq C\|\varphi\|_{C^{0}(\partial D)}\varepsilon^{\frac{1}{2}}.
\end{align*}
Analogously,
\begin{align*}
|b_{2}^{\beta}-b_{2}^{\ast\beta}|\leq C\|\varphi\|_{C^{0}(\partial D)}\varepsilon^{\frac{1}{2}}.
\end{align*}
That is, we have
\begin{lemma}\label{KM323}
Assume as above. Then for a sufficiently small $\varepsilon>0$,
\begin{align*}
b_{i}^{\beta}=b_{i}^{\ast\beta}+O(\varepsilon^{\frac{1}{2}}),\quad i=1,2,\;\beta=1,2,...,\frac{d(d+1)}{2},
\end{align*}
which yields that
\begin{align*}
\sum\limits^{2}_{i=1}b_{i}^{\beta}=\sum\limits^{2}_{i=1}b_{i}^{\ast\beta}+O(\varepsilon^{\frac{1}{2}}),
\end{align*}
where $b_{i}^{\ast\beta}$, $b_{i}^{\beta}$, $i=1,2,\beta=1,2,...,\frac{d(d+1)}{2}$ are defined in \eqref{LMZR} and \eqref{KAT001}, respectively.

\end{lemma}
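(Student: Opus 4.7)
The plan is to leverage the $L^\infty$ closeness of $v_i^\beta$ to $v_i^{\ast\beta}$ near $\partial D$ (where the problem is genuinely perturbative, since $\partial D$ is at a fixed positive distance from the contact point) and then to transfer the required bound on the $\partial D_i$ integrals into a bound on an integral over $\partial D$ via a Green's identity argument.

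First, I would invoke \eqref{AST123} (equivalently \eqref{GCT}), which already gives $|v_1^\beta - v_1^{\ast\beta}| \le C\varepsilon^{1/2}$ in $D \setminus(\overline{D_1 \cup D_1^\ast \cup D_2 \cup \mathcal{C}_{\varepsilon^{1/(2m)}}})$. Since $D_1, D_2$ are a fixed positive distance from $\partial D$ and $\mathcal{C}_{\varepsilon^{1/(2m)}}$ is concentrated near the origin, this estimate holds on a uniform open neighborhood $\mathcal{N}$ of $\partial D$. An entirely analogous argument, a cosmetic modification of the $v_1^\alpha - v_1^{\ast\alpha}$ bound in Step~1 of Lemma~\ref{lemmabc} applied to $v_2^\beta - v_2^{\ast\beta}$, yields $|v_2^\beta - v_2^{\ast\beta}| \le C\varepsilon^{1/2}$ on $\mathcal{N}$.

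Next, on $\mathcal{N}$ the function $v_i^\beta - v_i^{\ast\beta}$ solves the homogeneous Lam\'e system with vanishing Dirichlet data on $\partial D$, so standard boundary Schauder (or $W^{2,p}$) regularity for the Lam\'e system upgrades the $L^\infty$ estimate to
\[
\Bigl|\tfrac{\partial(v_i^\beta - v_i^{\ast\beta})}{\partial\nu_0}\Big|_+\Bigr| \le C\varepsilon^{1/2}\quad\mbox{on }\partial D,\qquad i=1,2.
\]
The bridge to the blow-up factors is the Green's identity. Applying \eqref{Le2.01222} on $\Omega$ with $u=v_i^\beta$ and $v=v_0$, using $\mathcal{L}_{\lambda,\mu}v_i^\beta = \mathcal{L}_{\lambda,\mu}v_0 = 0$, the boundary data ($v_0 = 0$ on $\partial D_1\cup\partial D_2$, $v_0=\varphi$ on $\partial D$, $v_i^\beta = \delta_{ij}\psi_\beta$ on $\partial D_j$, $v_i^\beta = 0$ on $\partial D$), together with the symmetry of $\mathbb{C}^0$, one obtains the representation
\[
b_i^\beta = \int_{\partial D_i} \tfrac{\partial v_0}{\partial\nu_0}\Big|_+ \cdot \psi_\beta = -\int_{\partial D}\tfrac{\partial v_i^\beta}{\partial\nu_0}\Big|_+ \cdot \varphi.
\]
The same identity on $\Omega^\ast$ gives $b_i^{\ast\beta} = -\int_{\partial D}\tfrac{\partial v_i^{\ast\beta}}{\partial\nu_0}|_+ \cdot \varphi$, which is exactly the formula in \eqref{LMZR} after noting $v_0^\ast = \varphi$ on $\partial D$. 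Subtracting and inserting the $C^1$ bound yields $|b_i^\beta - b_i^{\ast\beta}|\le C\|\varphi\|_{C^0(\partial D)}\varepsilon^{1/2}$, and summing in $i$ produces the second assertion of the lemma.

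The only mild obstacle is verifying the $L^\infty$-closeness of $v_2^\beta$ and $v_2^{\ast\beta}$, since Proposition~\ref{thm86} was stated and proved explicitly for $v_1^\alpha$; however the barrier-free region between $D_1^\ast$ and $D_1$ is the sole asymmetry between the two sides, and swapping the roles of $D_1,D_2$ does not affect the $L^\infty$ maximum-principle step leading to \eqref{AST123}. Everything else is standard elliptic regularity applied on a region where the coefficient ellipticity constants and the geometry are $\varepsilon$-independent, so each inequality tracks through with a uniform constant $C$.
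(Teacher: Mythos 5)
Your proof is correct and follows essentially the same route as the paper: use the $L^\infty$ closeness of $v_i^\beta$ and $v_i^{\ast\beta}$ in an $\varepsilon$-independent neighborhood of $\partial D$ (the bound \eqref{AST123}/\eqref{GCT}), upgrade it to a $C^1$ bound on $\partial D$ by standard boundary regularity for the Lam\'e system (the region near $\partial D$ has $\varepsilon$-independent geometry and coefficients), and transfer $b_i^\beta$ from a $\partial D_i$ integral to a $\partial D$ integral via the Green's identity \eqref{Le2.01222}. The paper's own proof states the conclusion $|b_1^\beta - b_1^{\ast\beta}| \le |\int_{\partial D}\frac{\partial(v_1^\beta-v_1^{\ast\beta})}{\partial\nu_0}|_+\cdot\varphi|$ directly without writing out the Green's-identity rewriting of $b_j^\beta$, and dismisses the $i=2$ case with ``Analogously''; you simply make both of these small steps explicit, including the observation that the maximum-principle argument of \eqref{AST123} is symmetric under swapping $D_1$ and $D_2$ and so applies equally to $v_2^\beta - v_2^{\ast\beta}$. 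No gaps.
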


Combining Lemmas \ref{lemmabc} and \ref{KM323}, we obtain the following convergence results.
\begin{lemma}\label{COOO}
Assume as above. Let $C_{2}^{\alpha}$, $\alpha=1,2,...,\frac{d(d+1)}{2}$ be defined in \eqref{Decom}. Then for a sufficiently small $\varepsilon>0$,
\begin{align}\label{LGR001}
C_{2}^{\alpha}=&C_{\ast}^{\alpha}+O(r_{\varepsilon}),\quad\alpha=1,2,...,\frac{d(d+1)}{2},
\end{align}
where $C_{\ast}^{\alpha}$ is defined by \eqref{ZZWWWW}, $r_{\varepsilon}$ is defined in \eqref{JTD}.

\end{lemma}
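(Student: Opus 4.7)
The plan is to view \eqref{PLA001} as a linear system in the unknowns $(X^1, X^2)$, solve for $X^2$ by Cramer's rule combined with block Schur-complement elimination, and then match the result against the closed-form expressions \eqref{ZZWWWW}. A three-way case split is unavoidable, because by Lemma \ref{lemmabc}(i)--(ii) the pattern of which diagonal entries $a_{11}^{\alpha\alpha}$ blow up is precisely governed by the three regimes $m \geq d+1$, $d-1 \leq m < d+1$, and $m < d-1$, which match the three branches of \eqref{ZZWWWW}.

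\emph{Case $m \geq d+1$.} Every diagonal entry of $\mathbb{A}$ blows up, so $\mathbb{A}$ is invertible with $\|\mathbb{A}^{-1}\| \lesssim \rho_2(d,m;\varepsilon)^{-1}$ (the slower of the two decay rates $\rho_0^{-1}$, $\rho_2^{-1}$). Write Cramer's rule in the form $C_2^\alpha = \det \tilde F^\alpha / \det \tilde F$, where $\tilde F$ is the coefficient matrix of \eqref{PLA001} and $\tilde F^\alpha$ replaces its $(\alpha + d(d+1)/2)$-th column by $(Y^1, Y^2)^T$. The Schur-complement factorization
$$\det \tilde F = \det \mathbb{A} \cdot \det(\mathbb{D} - \mathbb{C}\mathbb{A}^{-1}\mathbb{B})$$
(and its analogue for $\tilde F^\alpha$) lets the common factor $\det \mathbb{A}$ cancel. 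Using Lemma \ref{lemmabc}(iv) to replace $\mathbb{D}$ by $\mathbb{D}^\ast$, Lemma \ref{KM323} to replace $Y^2$ by $Y^{2\ast}$, and absorbing $\mathbb{C}\mathbb{A}^{-1}(\cdots)$ into $O(\|\mathbb{A}^{-1}\|) = O(\rho_2^{-1})$, one recovers the first branch of \eqref{ZZWWWW} with remainder $O(\max\{\varepsilon^{1/4}, \rho_2^{-1}\})$, matching the first two lines of \eqref{JTD}.

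\emph{Case $d-1 \leq m < d+1$.} Only the translation block $\mathbb{A}_t$ (the $d \times d$ leading principal submatrix of $\mathbb{A}$) blows up, with $\|\mathbb{A}_t^{-1}\| \lesssim \rho_0^{-1}$, while the rotation block $\mathbb{A}_r$ converges to $\mathbb{A}_0^\ast$. Split $X^1 = (X_t^1, X_r^1)$ and partition $\mathbb{A}, \mathbb{B}, \mathbb{C}$ and $Y^1$ accordingly, then perform a partial Schur-complement elimination of $X_t^1$ alone. This produces a reduced linear system for $(X_r^1, X^2)$ whose coefficient matrix and right-hand side equal $\mathbb{F}_0^\ast$ and $(b_1^{\ast\,d+1}, \ldots, b_1^{\ast\,d(d+1)/2}, Y^{2\ast})^T$, respectively, up to errors of size $O(\max\{\rho_0^{-1}, \varepsilon^{\min\{1/6,(d+1-m)/(12m)\}}, \varepsilon^{1/4}\})$ by Lemma \ref{lemmabc}(ii)--(iv) and Lemma \ref{KM323}. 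Cramer's rule on this reduced system then yields exactly the middle branch of \eqref{ZZWWWW} with the remainder prescribed by the middle two lines of \eqref{JTD}.

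\emph{Case $m < d-1$.} No diagonal entry blows up: Lemma \ref{lemmabc}(i)--(iii) gives $\mathbb{A} = \mathbb{A}^\ast + O(\varepsilon^{\min\{1/6,(d-1-m)/(12m)\}})$, and analogous convergence for $\mathbb{B}, \mathbb{C}, \mathbb{D}$, while Lemma \ref{KM323} controls $Y^1, Y^2$. Apply Cramer's rule directly to \eqref{PLA001} to recover the third branch of \eqref{ZZWWWW} with remainder $O(\varepsilon^{\min\{1/6,(d-1-m)/(12m)\}})$, matching the last line of \eqref{JTD}.

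The main obstacle, beyond careful bookkeeping of the various exponents from Lemma \ref{lemmabc}, is verifying that $\det \mathbb{D}^\ast$, $\det \mathbb{F}_0^\ast$, and $\det \mathbb{F}_1^\ast$ are bounded away from zero uniformly in $\varepsilon$ in their respective regimes, so that each Cramer quotient passes cleanly to its limit. This should follow from the bilinear representation $a_{ij}^{\ast\alpha\beta} = \int_{\Omega^\ast}(\mathbb{C}^0 e(v_i^{\ast\alpha}), e(v_j^{\ast\beta}))$ of \eqref{LMZR} together with the strict ellipticity \eqref{ellip}, which yields positive-definiteness of the associated Dirichlet-type quadratic form on $\Omega^\ast$.
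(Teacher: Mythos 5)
Your proposal follows the paper's proof essentially step for step: the same three-way case split keyed to which diagonal blocks of $\mathbb{A}$ blow up (everything for $m\geq d+1$, only the translation block for $d-1\leq m<d+1$, nothing for $m<d-1$), the same Schur-complement reduction to cancel the divergent factor $\det\mathbb{A}$ (respectively $\det\mathbb{A}_{tt}$), the same reliance on Lemma~\ref{lemmabc} and Lemma~\ref{KM323} to replace the $\varepsilon$-dependent entries by their starred limits, and Cramer's rule on what remains. Your bookkeeping of the remainder exponents and your identification of $\rho_2^{-1}$ (resp.\ $\rho_0^{-1}\rho_d(\varepsilon)$) as the dominant Schur-complement error also match the paper's formulas \eqref{MKZ001} and \eqref{MKZ002}.

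One point is stated too loosely: you write that strict ellipticity \eqref{ellip} together with the bilinear representation of $a_{ij}^{\ast\alpha\beta}$ "yields positive-definiteness" of $\mathbb{D}^\ast$, $\mathbb{F}_0^\ast$, $\mathbb{F}_1^\ast$. Ellipticity gives only $\xi^T\mathbb{D}^\ast\xi\gtrsim\int_{\Omega^\ast}|e(\sum_\alpha\xi_\alpha(v_1^{\ast\alpha}+v_2^{\ast\alpha}))|^2\geq 0$, i.e.\ positive semi-definiteness; to upgrade this to a strict inequality one must rule out $e(\sum_\alpha\xi_\alpha(v_1^{\ast\alpha}+v_2^{\ast\alpha}))\equiv 0$, which forces the sum to be a rigid displacement, and then the boundary values on $\partial D$ and $\partial D_1^{\ast}$ combined with the rigid-motion lemma (Lemma~\ref{GLW}) force $\xi=0$. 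That additional step is exactly the content of the paper's argument around \eqref{AJZ001}, and it is the one ingredient your sketch omits; the rest is correct.
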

\begin{remark}
The asymptotic results in \eqref{LGR001} actually provide the explicit upper and lower bounds on the free constants $C_{2}^{\alpha}$, $\alpha=1,2,...,\frac{d(d+1)}{2}$, which improves the results of Lemma 4.1 in \cite{BLL2015} and Proposition 4.1 in \cite{BLL2017} in terms of the boundness of these free constants.
\end{remark}

Before proving Lemma \ref{COOO}, we first recall the following lemma on the linear space of rigid displacement $\Psi$, which is Lemma 6.1 of \cite{BLL2017}.
\begin{lemma}\label{GLW}
Let $\xi$ be an element of $\Psi$, defined by \eqref{LAK01} with $d\geq2$. If $\xi$ vanishes at $d$ distinct points $\bar{x}_{1}$, $i=1,2,...,d$, which do not lie on a $(d-1)$-dimensional plane, then $\xi=0$.
\end{lemma}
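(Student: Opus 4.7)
The proof will exploit the affine parametrization of $\Psi$. Any element $\psi$ of $\Psi$ satisfies $\nabla\psi + (\nabla\psi)^T = 0$, which forces $\nabla\psi$ to be a constant skew-symmetric matrix, so $\psi$ has the explicit form $\psi(x) = Ax + b$ with $A \in \mathbb{R}^{d\times d}$ antisymmetric and $b \in \mathbb{R}^d$. The $\frac{d(d-1)}{2}$ independent entries of $A$ together with the $d$ entries of $b$ account for the full $\frac{d(d+1)}{2}$-dimensional space $\Psi$, matching the basis listed in \eqref{OPP}. Substituting this form into the vanishing hypothesis $\xi(\bar{x}_i) = 0$ for $i = 1, \dots, d$ yields $A\bar{x}_i + b = 0$ for each $i$. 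Subtracting the $i=1$ relation from the others eliminates $b$ and produces the purely linear system $A(\bar{x}_i - \bar{x}_1) = 0$ for $i = 2, \dots, d$. The hypothesis that the points are not confined to a lower-dimensional affine subspace is exactly affine independence, so the $d-1$ difference vectors $\{\bar{x}_i - \bar{x}_1\}_{i=2}^{d}$ are linearly independent and span a $(d-1)$-dimensional subspace $V \subseteq \ker A$.

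The core of the argument is then to deduce $A = 0$ from the existence of a $(d-1)$-dimensional kernel inside a skew-symmetric matrix on $\mathbb{R}^d$. Antisymmetry gives $\langle Aw, v\rangle = -\langle w, Av\rangle = 0$ for every $v \in V$ and every $w \in \mathbb{R}^d$, so $\mathrm{Range}(A) \subseteq V^\perp$, a one-dimensional space, and consequently $\mathrm{rank}(A) \leq 1$. Invoking the classical fact that a real antisymmetric matrix has even rank (its nonzero eigenvalues come in purely imaginary conjugate pairs $\pm i\mu$, equivalently it admits a real block-diagonal form with $2\times 2$ skew blocks and a zero block), one concludes $\mathrm{rank}(A) = 0$, i.e., $A \equiv 0$. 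Plugging $A = 0$ back into $A\bar{x}_1 + b = 0$ immediately yields $b = 0$, and hence $\xi \equiv 0$.

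The argument is a short exercise in linear algebra once the parametrization $\psi(x) = Ax + b$ and the even-rank property of real skew-symmetric matrices are in hand, so no step should present a serious obstacle. The only mildly subtle ingredient is the geometric-to-algebraic translation in the first paragraph, namely reading the non-coplanarity hypothesis on $\{\bar{x}_i\}_{i=1}^{d}$ as the affine independence of these $d$ points, or equivalently as the linear independence of the $d-1$ difference vectors $\bar{x}_i - \bar{x}_1$; this is the precise condition under which the skew matrix $A$ is forced to vanish.
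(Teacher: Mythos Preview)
Your argument is correct. The paper does not actually supply a proof of this lemma; it merely quotes the statement as Lemma~6.1 of \cite{BLL2017} and uses it as a black box inside the proof of Lemma~\ref{COOO}. So there is no ``paper's proof'' to compare against, and your self-contained linear-algebra argument --- write $\xi(x)=Ax+b$ with $A$ skew, reduce to $A(\bar{x}_i-\bar{x}_1)=0$, force $\dim\ker A\geq d-1$, then use $\operatorname{Range}(A)\subseteq(\ker A)^{\perp}$ together with the even-rank property of real skew-symmetric matrices to conclude $A=0$ and hence $b=0$ --- is exactly the standard route and is complete as written.

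One small remark on the hypothesis: the phrase ``do not lie on a $(d-1)$-dimensional plane'' is literally impossible for $d$ points in $\mathbb{R}^d$ if ``plane'' means affine subspace, since any $d$ points span at most a $(d-1)$-dimensional affine flat. The intended reading (consistent with how the lemma is applied to points on $\partial D$ in the proof of Lemma~\ref{COOO}) is affine independence of the $d$ points, i.e., that they do not lie on a common $(d-2)$-dimensional affine subspace, or equivalently that $\{\bar{x}_i-\bar{x}_1\}_{i=2}^{d}$ are linearly independent. You already made this translation explicitly, so your proof is unaffected; it is simply worth flagging that the statement as printed has this ambiguity.
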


\begin{proof}[The proof of Lemma \ref{COOO}]
We divide into three cases to complete the proof of Lemma \ref{COOO}.

$(\mathrm{i})$ If $m\geq d+1$, for $\alpha=1,2,...,\frac{d(d+1)}{2}$, we replace the elements of $\alpha$-th column in the matrix $\mathbb{D}$ defined in \eqref{GGDA} by column vector $\Big(\sum\limits_{i=1}^{2}b_{i}^{1},...,\sum\limits_{i=1}^{2}b_{i}^{\frac{d(d+1)}{2}}\Big)^{T}$ and then obtain new matrix $\mathbb{D}^{\ast\alpha}$ as follows:
\begin{gather*}
\mathbb{D}^{\alpha}=
\begin{pmatrix}
\sum\limits^{2}_{i,j=1}a_{ij}^{11}&\cdots&\sum\limits_{i=1}^{2}b_{i}^{1}&\cdots&\sum\limits^{2}_{i,j=1}a_{ij}^{1\,\frac{d(d+1)}{2}} \\\\ \vdots&\ddots&\vdots&\ddots&\vdots\\\\ \sum\limits^{2}_{i,j=1}a_{ij}^{\frac{d(d+1)}{2}\,1}&\cdots&\sum\limits_{i=1}^{2}b_{i}^{\frac{d(d+1)}{2}}&\cdots&\sum\limits^{2}_{i,j=1}a_{ij}^{\frac{d(d+1)}{2}\,\frac{d(d+1)}{2}}
\end{pmatrix}.
\end{gather*}
Then we deduce from Lemmas \ref{lemmabc} and \ref{KM323} that for $\alpha=1,2,...,\frac{d(d+1)}{2}$,
\begin{align}\label{KTAZ001}
\det\mathbb{D}=\det\mathbb{D}^{\ast}+O(\varepsilon^{\frac{1}{4}}),\quad\det\mathbb{D}^{\alpha}=\det\mathbb{D}^{\ast\alpha}+O(\varepsilon^{\frac{1}{4}}).
\end{align}
Claim that $\mathbb{D}^{\ast}$ is a positive definite matrix and then $\det\mathbb{D}^{\ast}>0$. In fact, it follows from ellipticity condition \eqref{ellip} that for any $\xi=(\xi_{1},\xi_{2},...,\xi_{\frac{d(d+1)}{2}})^{T}\neq0$,
\begin{align}\label{AJZ001}
\xi^{T}\mathbb{D}^{\ast}\xi=&\int_{\Omega^{\ast}}\Bigg(\mathbb{C}^{0}e\bigg(\sum^{\frac{d(d+1)}{2}}_{\alpha=1}\xi_{\alpha}(v_{1}^{\ast\alpha}+v_{2}^{\ast\alpha})\bigg),e\bigg(\sum^{\frac{d(d+1)}{2}}_{\beta=1}\xi_{\beta}(v_{1}^{\ast\beta}+v_{2}^{\ast\beta})\bigg)\Bigg)\notag\\
\geq&\frac{1}{C}\int_{\Omega^{\ast}}\bigg|e\bigg(\sum^{\frac{d(d+1)}{2}}_{\alpha=1}\xi_{\alpha}(v_{1}^{\ast\alpha}+v_{2}^{\ast\alpha})\bigg)\bigg|^{2}>0,
\end{align}
where in the last line we utilized the fact that $e\big(\sum^{\frac{d(d+1)}{2}}_{\alpha=1}\xi_{\alpha}(v_{1}^{\ast\alpha}+v_{2}^{\ast\alpha})\big)\not\equiv0$ in $\Omega^{\ast}$. Otherwise, if $e\big(\sum^{\frac{d(d+1)}{2}}_{\alpha=1}\xi_{\alpha}(v_{1}^{\ast\alpha}+v_{2}^{\ast\alpha})\big)\equiv0$ in $\Omega^{\ast}$, then there exist some constants $a_{i}$, $i=1,2,...,\frac{d(d+1)}{2}$ such that $\sum_{\alpha=1}^{\frac{d(d+1)}{2}}\xi_{\alpha}(v_{1}^{\ast\alpha}+v_{2}^{\ast\alpha})=\sum^{\frac{d(d+1)}{2}}_{i=1}a_{i}\psi_{i}$, where $\psi_{i},\,i=1,2,...,\frac{d(d+1)}{2}$ are defined in \eqref{OPP}. In light of the fact that $\psi_{i},\,i=1,2,...,\frac{d(d+1)}{2}$ are linear independent and $\sum_{\alpha=1}^{\frac{d(d+1)}{2}}\xi_{\alpha}(v_{1}^{\ast\alpha}+v_{2}^{\ast\alpha})=0$ on $\partial D$, we deduce from Lemma \ref{GLW} that $a_{i}=0$, $i=1,2,...,\frac{d(d+1)}{2}$. Since $\sum_{\alpha=1}^{\frac{d(d+1)}{2}}\xi_{\alpha}(v_{1}^{\ast\alpha}+v_{2}^{\ast\alpha})=\sum_{\alpha=1}^{\frac{d(d+1)}{2}}\xi_{\alpha}\psi_{\alpha}=0$ on $\partial D_{1}^{\ast}$, then it follows from the linear independence of $\big\{\psi_{i}\big|\,i=1,2,...,\frac{d(d+1)}{2}\big\}$ again that $\xi=(\xi_{1},\xi_{2},...,\xi_{\frac{d(d+1)}{2}})^{T}=0$. This is a contradiction.

From \eqref{KTAZ001}, we obtain
\begin{align}\label{AGU01}
\frac{\det\mathbb{D}^{\alpha}}{\det\mathbb{D}}=&\frac{\det\mathbb{D}^{\ast\alpha}}{\det\mathbb{D}^{\ast}}\frac{1}{1-{\frac{\det\mathbb{D}^{\ast}-\det\mathbb{D}}{\det\mathbb{D}^{\ast}}}}+\frac{\det\mathbb{D}^{\alpha}-\det\mathbb{D}^{\ast\alpha}}{\det\mathbb{D}}\notag\\
=&\frac{\det\mathbb{D}^{\ast\alpha}}{\det\mathbb{D}^{\ast}}(1+O(\varepsilon^{\frac{1}{4}})).
\end{align}
In light of \eqref{PLA001}, it follows from Cramer's rule and \eqref{AGU01} that for $\alpha=1,2,...,\frac{d(d+1)}{2}$,
\begin{align}\label{MKZ001}
C_{2}^{\alpha}=&\frac{\det\mathbb{D}^{\alpha}}{\det\mathbb{D}}(1+O(\rho_{2}^{-1}(d,m;\varepsilon)))=\frac{\det\mathbb{D}^{\ast\alpha}}{\det\mathbb{D}^{\ast}}(1+O(\varepsilon^{\min\{\frac{1}{4},\frac{m-d-1}{m}\}})).
\end{align}

$(\mathrm{ii})$ If $d-1\leq m<d+1$, define
\begin{gather}\mathbb{A}_{0}=\begin{pmatrix} a_{11}^{d+1\,d+1}&\cdots&a_{11}^{d+1\frac{d(d+1)}{2}} \\\\ \vdots&\ddots&\vdots\\\\a_{11}^{\frac{d(d+1)}{2}d+1}&\cdots&a_{11}^{\frac{d(d+1)}{2}\frac{d(d+1)}{2}}\end{pmatrix},\label{HARBN001}\\
%\end{gather}
%\begin{gather}
\mathbb{B}_{0}=\begin{pmatrix} \sum\limits^{2}_{i=1}a_{i1}^{d+1\,1}&\cdots&\sum\limits^{2}_{i=1}a_{i1}^{d+1\,\frac{d(d+1)}{2}} \\\\ \vdots&\ddots&\vdots\\\\ \sum\limits^{2}_{i=1}a_{i1}^{\frac{d(d+1)}{2}1}&\cdots&\sum\limits^{2}_{i=1}a_{i1}^{\frac{d(d+1)}{2}\frac{d(d+1)}{2}}\end{pmatrix},\notag\\
\mathbb{C}_{0}=\begin{pmatrix} \sum\limits^{2}_{j=1}a_{1j}^{1\,d+1}&\cdots&\sum\limits^{2}_{j=1}a_{1j}^{1\frac{d(d+1)}{2}} \\\\ \vdots&\ddots&\vdots\\\\ \sum\limits^{2}_{j=1}a_{1j}^{\frac{d(d+1)}{2}\,d+1}&\cdots&\sum\limits^{2}_{j=1}a_{1j}^{\frac{d(d+1)}{2}\frac{d(d+1)}{2}}\end{pmatrix}.\notag
\end{gather}
For $\alpha=1,2,...,\frac{d(d+1)}{2}$, by substituting column vector $\Big(b_{1}^{d+1},...,b_{1}^{\frac{d(d+1)}{2}}\Big)^{T}$ for the elements of $\alpha$-th column in the matrix $\mathbb{B}_{0}$, we generate new matrix $\mathbb{B}_{0}^{\alpha}$ as follows:
\begin{gather*}
\mathbb{B}_{0}^{\alpha}=
\begin{pmatrix}
\sum\limits^{2}_{i=1}a_{i1}^{d+1\,1}&\cdots&b_{1}^{d+1}&\cdots&\sum\limits^{2}_{i=1}a_{i1}^{d+1\,\frac{d(d+1)}{2}} \\\\ \vdots&\ddots&\vdots&\ddots&\vdots\\\\ \sum\limits^{2}_{i=1}a_{i1}^{\frac{d(d+1)}{2}\,1}&\cdots&b_{1}^{\frac{d(d+1)}{2}}&\cdots&\sum\limits^{2}_{i=1}a_{i1}^{\frac{d(d+1)}{2}\frac{d(d+1)}{2}}
\end{pmatrix}.
\end{gather*}
Let
\begin{align*}
\mathbb{F}_{0}=\begin{pmatrix} \mathbb{A}_{0}&\mathbb{B}_{0} \\  \mathbb{C}_{0}&\mathbb{D}
\end{pmatrix},\quad \mathbb{F}_{0}^{\alpha}=\begin{pmatrix} \mathbb{A}_{0}&\mathbb{B}_{0}^{\alpha} \\  \mathbb{C}_{0}&\mathbb{D}^{\alpha}
\end{pmatrix},\quad\alpha=1,2,...,\frac{d(d+1)}{2}.
\end{align*}

Therefore, it follows from Lemmas \ref{lemmabc} and \ref{KM323} again that
\begin{align*}
\det\mathbb{F}_{0}=\det\mathbb{F}_{0}^{\ast}+O(\varepsilon^{\frac{d+1-m}{12m}}),\quad\det\mathbb{F}_{0}^{\alpha}=\det\mathbb{F}^{\ast\alpha}_{0}+O(\varepsilon^{\frac{d+1-m}{12m}}),
\end{align*}
which reads that
\begin{align}\label{QGH01}
\frac{\det\mathbb{F}_{0}^{\alpha}
}{\det\mathbb{F}_{0}}=&\frac{\det\mathbb{F}_{0}^{\ast\alpha}}{\det\mathbb{F}_{0}^{\ast}}\frac{1}{1-{\frac{\det\mathbb{F}_{0}^{\ast}-\det\mathbb{F}_{0}}{\det\mathbb{F}_{0}^{\ast}}}}+\frac{\det\mathbb{F}_{0}^{\alpha}-\det\mathbb{F}_{0}^{\ast\alpha}}{\det\mathbb{F}_{0}}\notag\\
=&\frac{\det\mathbb{F}_{0}^{\ast\alpha}}{\det\mathbb{F}_{0}^{\ast}}(1+O(\varepsilon^{\frac{d+1-m}{12m}})).
\end{align}
Following the same argument as in \eqref{AJZ001}, we deduce that $\det\mathbb{F}_{0}^{\ast}\neq0$. Denote
\begin{align}\label{LHAZ001}
\rho_{d}(\varepsilon)=&
\begin{cases}
|\ln\varepsilon|,&d=2,\\
1,&d\geq3.
\end{cases}
\end{align}
Then applying Cramer's rule to \eqref{PLA001}, we deduce from \eqref{QGH01} that for $\alpha=1,2,...,\frac{d(d+1)}{2}$,
\begin{align}\label{MKZ002}
C_{2}^{\alpha}=&\frac{\det\mathbb{F}_{0}^{\alpha}}{\det \mathbb{F}_{0}}(1+O(\rho_{0}^{-1}(d,m;\varepsilon)\rho_{d}(\varepsilon)))\notag\\
=&\frac{\det\mathbb{F}_{0}^{\ast\alpha}}{\det \mathbb{F}_{0}^{\ast}}
\begin{cases}
1+O(\varepsilon^{\min\{\frac{d+1-m}{12m},\frac{m-d+1}{m}\}}),&d-1<m<d+1,\\
1+O(|\ln\varepsilon|^{-1}),&m=d-1.
\end{cases}
\end{align}

$(\mathrm{iii})$ If $m<d-1$, by replacing the elements of $\alpha$-th column in the matrix $\mathbb{B}$, by column vector $\big(b_{1}^{1},...,b_{1}^{\frac{d(d+1)}{2}}\big)^{T}$, we obtain new matrices $\mathbb{B}_{1}^{\alpha}$ as follows:
\begin{gather*}
\mathbb{B}_{1}^{\alpha}=
\begin{pmatrix}
\sum\limits^{2}_{i=1}a_{i1}^{11}&\cdots&b_{1}^{1}&\cdots&\sum\limits^{2}_{i=1}a_{i1}^{1\,\frac{d(d+1)}{2}} \\\\ \vdots&\ddots&\vdots&\ddots&\vdots\\\\ \sum\limits^{2}_{i=1}a_{i1}^{\frac{d(d+1)}{2}\,1}&\cdots&b_{1}^{\frac{d(d+1)}{2}}&\cdots&\sum\limits^{2}_{i=1}a_{i1}^{\frac{d(d+1)}{2}\frac{d(d+1)}{2}}
\end{pmatrix}.
\end{gather*}
Write
\begin{align*}
\mathbb{F}_{1}=\begin{pmatrix} \mathbb{A}&\mathbb{B} \\  \mathbb{C}&\mathbb{D}
\end{pmatrix},\quad\mathbb{F}^{\alpha}_{1}=\begin{pmatrix} \mathbb{A}&\mathbb{B}_{1}^{\alpha} \\  \mathbb{C}&\mathbb{D}^{\alpha}
\end{pmatrix},\;\,\alpha=1,2,...,\frac{d(d+1)}{2}.
\end{align*}
Then from Lemmas \ref{lemmabc} and \ref{KM323}, we derive that for $\alpha=1,2,...,\frac{d(d+1)}{2}$,
\begin{align*}
\det\mathbb{F}_{1}=\det\mathbb{F}_{1}^{\ast}+O(\varepsilon^{\min\{\frac{1}{6},\frac{d-1-m}{12m}\}}),\quad\det\mathbb{F}_{1}^{\alpha}=\det\mathbb{F}^{\ast\alpha}_{1}+O(\varepsilon^{\min\{\frac{1}{6},\frac{d-1-m}{12m}\}}).
\end{align*}
By the same argument as in \eqref{AJZ001}, we obtain that $\det\mathbb{F}^{\ast}_{1}\neq0$. Then we have
\begin{align*}
\frac{\det\mathbb{F}_{1}^{\alpha}
}{\det\mathbb{F}_{1}}=&\frac{\det\mathbb{F}_{1}^{\ast\alpha}}{\det\mathbb{F}_{1}^{\ast}}\frac{1}{1-{\frac{\det\mathbb{F}_{1}^{\ast}-\det\mathbb{F}_{1}}{\det\mathbb{F}_{1}^{\ast}}}}+\frac{\det\mathbb{F}_{1}^{\alpha}-\det\mathbb{F}_{1}^{\ast\alpha}}{\det\mathbb{F}_{1}}\notag\\
=&\frac{\det\mathbb{F}^{\ast\alpha}_{1}}{\det\mathbb{F}_{1}^{\ast}}(1+O(\varepsilon^{\min\{\frac{1}{6},\frac{d-1-m}{12m}\}})),\quad\alpha=1,2,...,\frac{d(d+1)}{2}.
\end{align*}
This, in combination with \eqref{PLA001} and Cramer's rule, reads that for $\alpha=1,2,...,\frac{d(d+1)}{2}$,
\begin{align}\label{MKZ003}
C_{2}^{\alpha}=&\frac{\det\mathbb{F}_{1}^{\alpha}}{\det \mathbb{F}_{1}}=\frac{\det\mathbb{F}_{1}^{\ast\alpha}}{\det \mathbb{F}^{\ast}_{1}}(1+O(\varepsilon^{\min\{\frac{1}{6},\frac{d-1-m}{12m}\}})).
\end{align}

Then Lemma \ref{COOO} follows from \eqref{MKZ001} and \eqref{MKZ002}--\eqref{MKZ003}.

\end{proof}

We are now ready to give the proof of Theorem \ref{JGR}.
\begin{proof}[The proof of Theorem \ref{JGR}.]
By linearity, we obtain that $u_{b}^{\ast}$ can be decomposed as follows:
\begin{align}\label{KTG001}
u_{b}^{\ast}=\sum^{\frac{d(d+1)}{2}}_{\alpha=1}C_{\ast}^{\alpha}(v_{1}^{\ast\alpha}+v_{2}^{\ast\alpha})+v_{0}^{\ast},
\end{align}
where $v_{0}^{\ast}$ and $v_{i}^{\ast\alpha}$, $i=1,2$ are defined by \eqref{ZG001} and \eqref{qaz001111}, respectively. Then in view of \eqref{KTG001} and using integration by parts, it follows from \eqref{AZQ001}, Lemmas \ref{KM323} and \ref{COOO} that
\begin{align*}
\mathcal{B}_{\beta}[\varphi]-\mathcal{B}_{\beta}^{\ast}[\varphi]=&\int_{\partial D_{1}}\frac{\partial u_{b}}{\partial\nu_{0}}\Big|_{+}\cdot\psi_{\beta}-\int_{\partial D_{1}^{\ast}}\frac{\partial u_{b}^{\ast}}{\partial\nu_{0}}\Big|_{+}\cdot\psi_{\beta}\notag\\
=&\sum^{\frac{d(d+1)}{2}}_{\alpha=1}(C_{\ast}^{\alpha}-C_{2}^{\alpha})\sum^{2}_{i=1}a_{i1}^{\alpha\beta}+\sum^{\frac{d(d+1)}{2}}_{\alpha=1}C_{\ast}^{\alpha}\left(\sum^{2}_{i=1}a_{i1}^{\ast\alpha\beta}-\sum^{2}_{i=1}a_{i1}^{\alpha\beta}\right)\notag\\
&+b_{1}^{\beta}-b_{1}^{\ast\beta}=O(r_{\varepsilon}),
\end{align*}
where $r_{\varepsilon}$ is defined by \eqref{JTD}. This completes the proof.

\end{proof}

\section{Application in the gradient estimates and asymptotics}\label{KGRA90}

As an immediate consequence of Theorem \ref{JGR}, we obtain the optimal upper and lower bounds on the gradient in all dimensions and the asymptotic expansions of the gradient for two adjacent inclusions with different principal curvatures in dimensions three, which will be presented in the following two theorems.

\subsection{Gradient estimates}
To begin with, for $\alpha=1,2,...,d$, define the blow-up factor matrices as follows:
\begin{gather}\label{PTCA001}
\mathbb{A}^{\ast\alpha}_{1}=\begin{pmatrix}\mathcal{B}_{\alpha}^{\ast}[\varphi]&a_{11}^{\ast\alpha\,d+1}&\cdots&a_{11}^{\ast\alpha\frac{d(d+1)}{2}} \\ \mathcal{B}_{d+1}^{\ast}[\varphi]&a_{11}^{\ast d+1\,d+1}&\cdots&a_{11}^{\ast d+1\frac{d(d+1)}{2}}\\ \vdots&\vdots&\ddots&\vdots\\ \mathcal{B}_{\frac{d(d+1)}{2}}^{\ast}[\varphi]&a_{11}^{\ast\frac{d(d+1)}{2}d+1}&\cdots&a_{11}^{\ast\frac{d(d+1)}{2}\frac{d(d+1)}{2}}
\end{pmatrix}.
\end{gather}
Second, for $\alpha=d+1,...,\frac{d(d+1)}{2}$, after replacing the elements of $\alpha$-th column in the matrix $\mathbb{A}_{0}^{\ast}$ defined in \eqref{LAGT001} by column vector $\big(\mathcal{B}_{d+1}^{\ast}[\varphi],...,\mathcal{B}_{\frac{d(d+1)}{2}}^{\ast}[\varphi]\big)^{T}$, we generate the new matrix $\mathbb{A}_{2}^{\ast\alpha}$ as follows:
\begin{gather}\label{DKY001}
\mathbb{A}_{2}^{\ast\alpha}=
\begin{pmatrix}
a_{11}^{\ast d+1\,d+1}&\cdots&\mathcal{B}_{d+1}^{\ast}[\varphi]&\cdots&a_{11}^{\ast d+1\,\frac{d(d+1)}{2}} \\\\ \vdots&\ddots&\vdots&\ddots&\vdots\\\\a_{11}^{\ast\frac{d(d+1)}{2}\,d+1}&\cdots&\mathcal{B}_{\frac{d(d+1)}{2}}^{\ast}[\varphi]&\cdots&a_{11}^{\ast\frac{d(d+1)}{2}\,\frac{d(d+1)}{2}}
\end{pmatrix}.
\end{gather}

Third, for $\alpha=1,2,...,\frac{d(d+1)}{2}$, we substitute column vector $\big(\mathcal{B}_{1}^{\ast}[\varphi],...,\mathcal{B}^{\ast}_{\frac{d(d+1)}{2}}[\varphi]\big)^{T}$ for the elements of $\alpha$-th column in the matrix $\mathbb{A}^{\ast}$ defined in \eqref{WZW} and then derive new matrices $\mathbb{A}_{3}^{\ast\alpha}$ as follows:
\begin{gather}\label{JGAT001}
\mathbb{A}_{3}^{\ast\alpha}=
\begin{pmatrix}
a_{11}^{\ast11}&\cdots&\mathcal{B}_{1}^{\ast}[\varphi]&\cdots&a_{11}^{\ast1\,\frac{d(d+1)}{2}} \\\\ \vdots&\ddots&\vdots&\ddots&\vdots\\\\a_{11}^{\ast\frac{d(d+1)}{2}\,1}&\cdots&\mathcal{B}_{\frac{d(d+1)}{2}}^{\ast}[\varphi]&\cdots&a_{11}^{\ast\frac{d(d+1)}{2}\,\frac{d(d+1)}{2}}
\end{pmatrix}.
\end{gather}
Here and below, $a\lesssim b$ (or $a\gtrsim b$) represents $a\leq Cb$ (or $a\geq \frac{1}{C}b$) for some positive constant $C$, depending only on $d,R,\tau_{0},\tau_{3},\tau_{4},$ and the $C^{2,\alpha}$ norms of $\partial D_{1}$ and $\partial D_{2}$, but not on the distance parameter $\varepsilon$, the curvature parameters $\tau_{1}$ and $\tau_{2}$, and the Lam\'{e} constants $\mathcal{L}_{d}^{\alpha}$, $\alpha=1,2,...,\frac{d(d+1)}{2}$. Let $a\simeq b$ denote $\frac{a}{b}=1+o(1)$, where $b\neq0$ and $\lim\limits_{\varepsilon\rightarrow0}o(1)=0$. This means that $b$ is approximately equal to $a$.

\begin{theorem}\label{MGA001}
Let $D_{1},D_{2}\subset D\subset\mathbb{R}^{d}\,(d\geq2)$ be defined as above, conditions $\mathrm{(}${\bf{H1}}$\mathrm{)}$--$\mathrm{(}${\bf{H3}}$\mathrm{)}$ hold. Let $u\in H^{1}(D;\mathbb{R}^{d})\cap C^{1}(\overline{\Omega};\mathbb{R}^{d})$ be the solution of (\ref{La.002}). Then for a sufficiently small $\varepsilon>0$,

$(\rm{i})$ if $m>d$, there exists some integer $d+1\leq \alpha_{0}\leq\frac{d(d+1)}{2}$ such that $|\mathcal{B}^{\ast}_{\alpha_{0}}[\varphi]|\neq0$, then for some $1\leq i_{\alpha_{0}}\leq d-1$ and $x\in\{x'|\,x_{i_{\alpha_{0}}}=\sqrt[m]{\varepsilon},\,x_{i}=0,i\neq i_{\alpha_{0}},1\leq i\leq d-1\}\cap\Omega$,
\begin{align*}
|\nabla u|\lesssim
\begin{cases}
\frac{\max\limits_{d+1\leq\alpha\leq\frac{d(d+1)}{2}}\tau_{2}^{\frac{d+1}{m}}|\mathcal{L}_{d}^{\alpha}|^{-1}|\mathcal{B}^{\ast}_{\alpha}[\varphi]|}{1+\tau_{1}}\frac{\varepsilon^{\frac{1-m}{m}}}{\rho_{2}(d,m;\varepsilon)},&m\geq d+1,\\
\frac{\max\limits_{d+1\leq\alpha\leq\frac{d(d+1)}{2}}|\det\mathbb{A}_{2}^{\ast\alpha}|}{(1+\tau_{1})|\det\mathbb{A}_{0}^{\ast}|}\frac{1}{\varepsilon^{\frac{m-1}{m}}},&d<m<d+1,
\end{cases}
\end{align*}
and
\begin{align*}
|\nabla u|\gtrsim
\begin{cases}
\frac{\tau_{1}^{\frac{d+1}{m}}||\mathcal{B}^{\ast}_{\alpha_{0}}[\varphi]||}{(1+\tau_{2})|\mathcal{L}_{d}^{\alpha_{0}}|}\frac{\varepsilon^{\frac{1-m}{m}}}{\rho_{2}(d,m;\varepsilon)},&m\geq d+1,\\
\frac{|\det\mathbb{A}^{\ast\alpha_{0}}_{2}|}{(1+\tau_{2})|\det\mathbb{A}^{\ast}_{0}|}\frac{1}{\varepsilon^{\frac{m-1}{m}}},&d<m<d+1;
\end{cases}
\end{align*}

$(\rm{ii})$ if $m\leq d$, there exists some integer $1\leq \alpha_{0}\leq d$ such that $\det\mathbb{A}_{1}^{\ast\alpha_{0}}\neq0$, then for $x\in\{|x'|=0\}\cap\Omega$,
\begin{align*}
|\nabla u|\lesssim
\begin{cases}
\frac{\max\limits_{1\lesssim\alpha\lesssim d}\tau_{2}^{\frac{d-1}{m}}|\mathcal{L}_{d}^{\alpha}|^{-1}|\det\mathbb{A}_{1}^{\ast\alpha}|}{|\det\mathbb{A}_{0}^{\ast}|}\frac{1}{\varepsilon\rho_{0}(d,m;\varepsilon)},&d-1\leq m\leq d,\\
\frac{\max\limits_{1\leq\alpha\leq d}|\det\mathbb{A}_{3}^{\ast\alpha}|}{|\det \mathbb{A}^{\ast}|}\frac{1}{\varepsilon},&m<d-1,
\end{cases}
\end{align*}
and
\begin{align*}
|\nabla u|\gtrsim
\begin{cases}
\frac{\tau_{1}^{\frac{d-1}{m}}|\det\mathbb{A}_{1}^{\ast\alpha_{0}}|}{|\mathcal{L}_{d}^{\alpha_{0}}||\det\mathbb{A}_{0}^{\ast}|}\frac{1}{\varepsilon\rho_{0}(d,m;\varepsilon)},&d-1\leq m\leq d,\\
\frac{|\det\mathbb{A}_{3}^{\ast\alpha_{0}}|}{|\det \mathbb{A}^{\ast}|}\frac{1}{\varepsilon},&m<d-1,
\end{cases}
\end{align*}
where $\mathbb{A}^{\ast}$ and $\mathbb{A}_{0}^{\ast}$ are, respectively, defined in \eqref{WZW} and \eqref{LAGT001}, the blow-up factors $\mathcal{B}_{\alpha}^{\ast}[\varphi]$, $\alpha=d+1,...,\frac{d(d+1)}{2}$ are defined by \eqref{KGF001}, $\mathbb{A}_{1}^{\ast\alpha}$, $\alpha=1,2,...,d$, $\mathbb{A}_{2}^{\ast\alpha}$, $\alpha=d+1,...,\frac{d(d+1)}{2}$, $\mathbb{A}_{3}^{\ast\alpha}$, $\alpha=1,2,...,d$ are defined by \eqref{PTCA001}--\eqref{JGAT001}.
\end{theorem}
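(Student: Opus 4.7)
The approach is to combine the decomposition \eqref{Decom002} with Cramer's rule applied to the linear system \eqref{JGRO001}, the convergence of blow-up factors from Theorem \ref{JGR}, and the asymptotic values of $a_{11}^{\alpha\beta}$ supplied by Lemma \ref{lemmabc}. Starting from
\begin{align*}
\nabla u = \sum_{\alpha=1}^{\frac{d(d+1)}{2}}(C_1^\alpha-C_2^\alpha)\nabla v_1^\alpha + \nabla u_b,
\end{align*}
the regular part $\nabla u_b$ is uniformly bounded: inside the gap its components decay exponentially by Corollary \ref{coro00z} applied to the decomposition \eqref{CTL001} together with the convergence of $C_2^\alpha$ from Lemma \ref{COOO}, and outside the gap it is controlled by standard elliptic estimates. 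Hence the blow-up of $\nabla u$ is fully captured by the singular sum. By Proposition \ref{thm86}, each $\nabla v_1^\alpha$ is approximated by the explicit leading term $\nabla\bar{u}_1^\alpha$ (see \eqref{zzwz002}) modulo a remainder of order $\delta^{(m-2)/m}$ for $\alpha\leq d$ and of order $1$ for $\alpha\geq d+1$, both of which are of lower order than the leading contributions we extract.

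Solving \eqref{JGRO001} via Cramer's rule gives
\begin{align*}
C_1^\alpha-C_2^\alpha=\frac{\det\mathbb{A}^{\alpha}[\varphi]}{\det\mathbb{A}},\qquad \alpha=1,2,\ldots,\tfrac{d(d+1)}{2},
\end{align*}
where $\mathbb{A}^{\alpha}[\varphi]$ denotes the matrix obtained from $\mathbb{A}=(a_{11}^{\alpha\beta})$ by replacing its $\alpha$-th column with the column vector $(\mathcal{B}_\beta[\varphi])_{\beta}$. Theorem \ref{JGR} converts each $\mathcal{B}_\beta[\varphi]$ into $\mathcal{B}_\beta^\ast[\varphi]$ up to $O(r_\varepsilon)$, and Lemma \ref{lemmabc} reduces $\mathbb{A}$ to an explicit matrix controlled by $\rho_0(d,m;\varepsilon)$ and $\rho_2(d,m;\varepsilon)$: the diagonal entry $a_{11}^{\alpha\alpha}$ is comparable to $\mathcal{L}_d^\alpha\rho_0$ for $\alpha\leq d$ and to $\mathcal{L}_d^\alpha\rho_2$ for $\alpha\geq d+1$, while the off-diagonal entries converge to their starred counterparts (or are at most logarithmic in the two-dimensional cross case). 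The resulting matrix splits into a translational block, a rotational block, and coupling terms, whose relative dominance depends on the regime of $m$: when $m\geq d+1$ both diagonal blocks diverge, with $\rho_0>\rho_2$; when $d-1\leq m<d+1$ only the translational block diverges while the rotational block converges to $\mathbb{A}_0^\ast$; and when $m<d-1$ the entire matrix $\mathbb{A}$ converges to $\mathbb{A}^\ast$.

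The final step is to evaluate at the prescribed probe points. At $x\in\{|x'|=0\}\cap\Omega$ one has $\psi_\alpha(x)=0$ for every rotational index $\alpha\geq d+1$, so only the translational coefficients $C_1^\alpha-C_2^\alpha$ with $\alpha\leq d$ contribute at leading order, and $|\nabla\bar{u}_1^\alpha|\sim|\partial_{x_d}\bar{v}|\sim\varepsilon^{-1}$ there; pairing with Cramer's rule yields the two $m\leq d$ subcases of the theorem, with the factor $1/\rho_0$ appearing when the translational block diverges and with $|\det\mathbb{A}_3^{\ast\alpha_0}|/|\det\mathbb{A}^\ast|$ appearing when it converges. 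At a point of the form $x_{i_{\alpha_0}}=\sqrt[m]{\varepsilon}$ with the other transverse coordinates zero, one has $\delta(x')\sim\varepsilon$ and $|\psi_{\alpha_0}|\sim\varepsilon^{1/m}$ for the selected rotational index $\alpha_0\geq d+1$, hence $|\nabla\bar{u}_1^{\alpha_0}|\sim\varepsilon^{(1-m)/m}$; a direct comparison of exponents shows that at this point every other term in the singular sum is of strictly lower order, and pairing with Cramer's rule produces the rates $\varepsilon^{(1-m)/m}/\rho_2$ when $m\geq d+1$ and $\varepsilon^{-(m-1)/m}$ when $d<m<d+1$.

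The main obstacle is the careful bookkeeping, across the five subregimes of $m$, needed to show that the various off-diagonal contributions to the Cramer numerator and denominator are dominated by the selected leading block, and to verify that the remainders $O(r_\varepsilon)$ from Theorem \ref{JGR}, together with those from Proposition \ref{thm86} and Lemma \ref{lemmabc}, are all absorbed into the leading asymptotic at each evaluation point. The matching lower bounds then follow from the nondegeneracy assumptions: whenever $|\mathcal{B}^\ast_{\alpha_0}[\varphi]|\neq 0$ or the relevant determinant $\det\mathbb{A}_i^{\ast\alpha_0}$ is nonzero, $|C_1^{\alpha_0}-C_2^{\alpha_0}|$ realises the claimed order of magnitude, and at the probe point all other rigid-motion generators either vanish exactly (rotations at $x'=0$) or produce contributions of strictly smaller order, so the leading contribution from the $\alpha_0$-th term cannot be cancelled.
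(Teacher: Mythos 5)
Your proposal follows the same route as the paper: decompose $\nabla u$ as in \eqref{Decom002}, solve \eqref{JGRO001} via Cramer's rule, feed in the asymptotics for $a_{11}^{\alpha\beta}$ from Lemma~\ref{lemmabc} and for $\mathcal{B}_\beta[\varphi]$ from Theorem~\ref{JGR} to obtain $C_1^\alpha-C_2^\alpha$, approximate $\nabla v_1^\alpha$ by $\nabla\bar u_1^\alpha$ via Proposition~\ref{thm86}, control $\nabla u_b$ via \eqref{KGTA}, and evaluate at the prescribed probe points with an exponent comparison. The overall structure, the block-diagonal dominance observations, and the regime splitting across $m$ are all in agreement with the paper's argument.

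One claim in your sketch is not literally correct: at $x\in\{|x'|=0\}\cap\Omega$ you state that $\psi_\alpha(x)=0$ for every rotational index $\alpha\geq d+1$. This holds for $\alpha\geq 2d$ (those rotations involve only the $x'$-coordinates), but fails for $d+1\leq\alpha\leq 2d-1$: there $\psi_\alpha=x_d e_{\alpha-d}-x_{\alpha-d}e_d$, and at $x'=0'$ one has $\psi_\alpha(0',x_d)=x_d e_{\alpha-d}\neq0$ whenever $x_d\neq0$. Your conclusion nevertheless survives: at $x'=0'$ one has $|x_d|\leq\varepsilon$, $|\nabla\bar v|\leq C/\varepsilon$ and $\nabla_{x'}\bar v=0$, so the Leibniz terms $\psi_\alpha\otimes\nabla\bar v$ and $\bar v\,\nabla\psi_\alpha$ are both $O(1)$, and combined with Proposition~\ref{thm86} this gives $|\nabla v_1^\alpha|=O(1)$ for $\alpha\geq d+1$, which is exactly what the dominance argument needs. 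Likewise, when establishing the lower bound you should be explicit (as the paper is) about which component of $\nabla u$ you isolate (e.g.\ $\partial_{x_d}u^d$ for case $(\mathrm{i})$, $\partial_{x_d}u^{\alpha_0}$ for case $(\mathrm{ii})$), since only after restricting to that component does the $\alpha_0$-th term dominate and the potential cancellation disappear; the statement that the leading term ``cannot be cancelled'' needs that component-wise reasoning rather than a magnitude comparison alone. With these two points tightened, your outline reproduces the paper's proof.
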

\begin{remark}
By using the stress concentration factors $\mathcal{B}_{\beta}^{\ast}[\varphi]$, $\beta=1,2,...,\frac{d(d+1)}{2}$, we give the optimal upper and lower bounds on the blow-up rate of the stress in Theorem \ref{MGA001} and thus prove its optimality. Moreover, the results in Theorem \ref{MGA001} improve and make complete the gradient estimates in the previous work \cite{BLL2015,BLL2017,L2018,HJL2018,LX2020} by capturing the blow-up factor matrices in all dimensions.

\end{remark}

\begin{proof}[The proof of Theorem \ref{MGA001}]

{\bf Step 1.} We divide into three cases to calculate the difference $C_{1}^{\alpha}-C_{2}^{\alpha}$, $\alpha=1,2,...,\frac{d(d+1)}{2}$.

{\bf Case 1.} Consider $m\geq d+1$.
Then applying Cramer's rule to \eqref{JGRO001}, it follows from Theorem \ref{JGR} and \eqref{LMC}--\eqref{LMC1} that for $\alpha=1,2,...,\frac{d(d+1)}{2}$,
\begin{align}\label{MNT001}
C_{1}^{\alpha}-C_{2}^{\alpha}=&\frac{\mathcal{B}_{\alpha}[\varphi]}{a_{11}^{\alpha\alpha}}(1+O(\rho_{2}^{-1}(d,m;\varepsilon))),
\end{align}
which reads that
\begin{align}\label{HND001}
|C_{1}^{\alpha}-C_{2}^{\alpha}|\lesssim&
\begin{cases}
\frac{\tau_{2}^{\frac{d-1}{m}}|\mathcal{B}_{\alpha}^{\ast}[\varphi]|}{|\mathcal{L}_{d}^{\alpha}|\rho_{0}(d,m;\varepsilon)},&\alpha=1,2,...,d,\\
\frac{\tau_{2}^{\frac{d+1}{m}}|\mathcal{B}_{\alpha}^{\ast}[\varphi]|}{|\mathcal{L}_{d}^{\alpha}|\rho_{2}(d,m;\varepsilon)},&\alpha=d+1,...,\frac{d(d+1)}{2},
\end{cases}
\end{align}
and
\begin{align}\label{HND002}
|C_{1}^{\alpha}-C_{2}^{\alpha}|\gtrsim&
\begin{cases}
\frac{\tau_{1}^{\frac{d-1}{m}}|\mathcal{B}_{\alpha}^{\ast}[\varphi]|}{|\mathcal{L}_{d}^{\alpha}|\rho_{0}(d,m;\varepsilon)},&\alpha=1,2,...,d,\\
\frac{\tau_{1}^{\frac{d+1}{m}}|\mathcal{B}_{\alpha}^{\ast}[\varphi]|}{|\mathcal{L}_{d}^{\alpha}|\rho_{2}(d,m;\varepsilon)},&\alpha=d+1,...,\frac{d(d+1)}{2}.
\end{cases}
\end{align}

{\bf Case 2.} Consider $d-1\leq m<d+1$. For $\alpha=1,2,...,d$, denote
\begin{gather*}
\mathbb{A}^{\alpha}_{1}=\begin{pmatrix}\mathcal{B}_{\alpha}[\varphi]&a_{11}^{\alpha\,d+1}&\cdots&a_{11}^{\alpha\frac{d(d+1)}{2}} \\ \mathcal{B}_{d+1}[\varphi]&a_{11}^{d+1\,d+1}&\cdots&a_{11}^{d+1\frac{d(d+1)}{2}}\\ \vdots&\vdots&\ddots&\vdots\\ \mathcal{B}_{\frac{d(d+1)}{2}}[\varphi]&a_{11}^{\frac{d(d+1)}{2}d+1}&\cdots&a_{11}^{\frac{d(d+1)}{2}\frac{d(d+1)}{2}}
\end{pmatrix},
\end{gather*}
and, for $\alpha=d+1,...,\frac{d(d+1)}{2}$, after replacing the elements of $\alpha$-th column in the matrix $\mathbb{A}_{0}$ defined in \eqref{HARBN001} by column vector $\big(\mathcal{B}_{d+1}[\varphi],...,\mathcal{B}_{\frac{d(d+1)}{2}}[\varphi]\big)^{T}$, we obtain a new matrix denoted by $\mathbb{A}_{2}^{\alpha}$ as follows:
\begin{gather*}
\mathbb{A}_{2}^{\alpha}=
\begin{pmatrix}
a_{11}^{d+1\,d+1}&\cdots&\mathcal{B}_{d+1}[\varphi]&\cdots&a_{11}^{d+1\,\frac{d(d+1)}{2}} \\\\ \vdots&\ddots&\vdots&\ddots&\vdots\\\\a_{11}^{\frac{d(d+1)}{2}\,d+1}&\cdots&\mathcal{B}_{\frac{d(d+1)}{2}}[\varphi]&\cdots&a_{11}^{\frac{d(d+1)}{2}\,\frac{d(d+1)}{2}}
\end{pmatrix}.
\end{gather*}
Then utilizing Cramer's rule for \eqref{JGRO001}, we deduce from Theorem \ref{JGR} and Lemma \ref{lemmabc} that
\begin{align}\label{LAMNZ001}
C_{1}^{\alpha}-C_{2}^{\alpha}=&
\begin{cases}
\frac{\prod\limits_{i\neq\alpha}^{d}a_{11}^{ii}\det\mathbb{A}_{1}^{\alpha}}{\prod\limits_{i=1}^{d}a_{11}^{ii}\det \mathbb{A}_{0}}\big(1+O(\rho_{0}^{-1}(d,m;\varepsilon)\rho_{d}(\varepsilon))\big),\quad\alpha=1,2,...,d,\\
\frac{\det\mathbb{A}_{2}^{\alpha}}{\det \mathbb{A}_{0}}\big(1+O(\rho_{0}^{-1}(d,m;\varepsilon))\big),\quad\quad\quad\alpha=d+1,...,\frac{d(d+1)}{2},
\end{cases}
%=&\frac{\det\mathbb{A}_{1}^{\ast\alpha}}{\det \mathbb{A}_{0}^{\ast}}\frac{1+O(\bar{\varepsilon}_{0}(d,m;\sigma))}{\mathcal{L}_{d}^{\alpha}\mathcal{M}_{0}\rho_{0}(d,m;\varepsilon)}.
\end{align}
where $\rho_{d}(\varepsilon)$ is defined by \eqref{LHAZ001}. Note that by using Theorem \ref{JGR} and Lemma \ref{lemmabc} again, we have
\begin{align*}
\det\mathbb{A}_{1}^{\alpha}=&\det\mathbb{A}^{\ast\alpha}_{1}+O(\varepsilon^{\frac{d+1-m}{12m}}),\quad\alpha=1,2,...,d,\\
\det\mathbb{A}_{2}^{\alpha}=&\det\mathbb{A}^{\ast\alpha}_{2}+O(\varepsilon^{\frac{d+1-m}{12m}}),\quad\alpha=d+1,...,\frac{d(d+1)}{2},\\
\det\mathbb{A}_{0}=&\det\mathbb{A}_{0}^{\ast}+O(\varepsilon^{\frac{d+1-m}{12m}}),
\end{align*}
which, in combination with \eqref{LAMNZ001}, leads to that for $\alpha=1,2,...,d,$
\begin{align}\label{ZKA001}
\frac{\tau_{1}^{\frac{d-1}{m}}|\det\mathbb{A}_{1}^{\ast\alpha}|}{|\mathcal{L}_{d}^{\alpha}|\det\mathbb{A}_{0}^{\ast}}\lesssim|C_{1}^{\alpha}-C_{2}^{\alpha}|\lesssim&\frac{\tau_{2}^{\frac{d-1}{m}}|\det\mathbb{A}_{1}^{\ast\alpha}|}{|\mathcal{L}_{d}^{\alpha}|\det\mathbb{A}_{0}^{\ast}},
\end{align}
and, for $\alpha=d+1,...,\frac{d(d+1)}{2}$,
\begin{align}\label{ZKA002}
C_{1}^{\alpha}-C_{2}^{\alpha}\simeq&\frac{\det\mathbb{A}_{2}^{\ast\alpha}}{\det \mathbb{A}_{0}^{\ast}}.
\end{align}

{\bf Case 3.} Consider $m<d-1$. For $\alpha=1,2,...,\frac{d(d+1)}{2}$, we substitute column vector $\big(\mathcal{B}_{1}[\varphi],...,\mathcal{B}_{\frac{d(d+1)}{2}}[\varphi]\big)^{T}$ for the elements of $\alpha$-th column in the matrix $\mathbb{A}$ defined in \eqref{GGDA01}, and then denote this new matrix by $\mathbb{A}_{3}^{\alpha}$ as follows:
\begin{gather*}
\mathbb{A}_{3}^{\alpha}=
\begin{pmatrix}
a_{11}^{11}&\cdots&\mathcal{B}_{1}[\varphi]&\cdots&a_{11}^{1\,\frac{d(d+1)}{2}} \\\\ \vdots&\ddots&\vdots&\ddots&\vdots\\\\a_{11}^{\frac{d(d+1)}{2}\,1}&\cdots&\mathcal{B}_{\frac{d(d+1)}{2}}[\varphi]&\cdots&a_{11}^{\frac{d(d+1)}{2}\,\frac{d(d+1)}{2}}
\end{pmatrix}.
\end{gather*}
Therefore, from Theorem \ref{JGR} and Lemma \ref{lemmabc}, we obtain that for $\alpha=1,2,...,\frac{d(d+1)}{2}$,
\begin{align*}
\det\mathbb{A}=\det\mathbb{A}^{\ast}+O(\varepsilon^{\min\{\frac{1}{6},\frac{d-1-m}{12m}\}}),\quad \det\mathbb{A}_{3}^{\alpha}=\det\mathbb{A}_{3}^{\ast\alpha}+O(\varepsilon^{\min\{\frac{1}{6},\frac{d-1-m}{12m}\}}),
\end{align*}
which, together with \eqref{JGRO001} and Cramer's rule, gives that
\begin{align}\label{GMARZT001}
C_{1}^{\alpha}-C_{2}^{\alpha}=&\frac{\det\mathbb{A}_{3}^{\alpha}}{\det \mathbb{A}}\simeq\frac{\det\mathbb{A}_{3}^{\ast\alpha}}{\det \mathbb{A}^{\ast}}.
\end{align}

\noindent{\bf Step 2.}
To begin with, it follows from Corollary \ref{coro00z} and \eqref{LGR001} that
\begin{align}\label{KGTA}
|\nabla u_{b}|=\left|\sum_{\alpha=1}^{\frac{d(d+1)}{2}}C_{2}^\alpha\nabla({v}_{1}^\alpha+{v}_{2}^\alpha)+\nabla{v}_{0}\right|\leq C\delta^{-\frac{d}{2}}e^{-\frac{1}{2C\delta^{1-1/m}}},\;\;\mathrm{in}\;\Omega_{R}.
\end{align}
In light of decomposition \eqref{Decom002}, \eqref{Le2.025} and \eqref{KGTA}, we divide into two cases to prove Theorem \ref{MGA001} as follows:

$(\mathrm{i})$ for $m>d$, recalling the assumed condition that there exists some integer $d+1\leq \alpha_{0}\leq\frac{d(d+1)}{2}$ such that $\det\mathbb{A}_{2}^{\ast\alpha_{0}}\neq0$, without loss of generality, we let $\alpha_{0}=d+1$. On one hand, from \eqref{HND001} and \eqref{ZKA001}--\eqref{ZKA002}, we obtain that for $x\in\{x'=(\sqrt[m]{\varepsilon},0,...,0)'\}\cap\Omega$,
\begin{align*}
|\nabla u|\lesssim&\sum_{\alpha=d+1}^{\frac{d(d+1)}{2}}|C_{1}^\alpha-C_{2}^\alpha||\nabla{v}_{1}^\alpha|+\left(\sum_{\alpha=1}^{d}|C_{1}^\alpha-C_{2}^\alpha||\nabla{v}_{1}^\alpha|+|\nabla u_{b}|\right)\notag\\
\lesssim&
\begin{cases} \frac{\max\limits_{d+1\leq\alpha\leq\frac{d(d+1)}{2}}\tau_{2}^{\frac{d+1}{m}}|\mathcal{L}_{d}^{\alpha}|^{-1}|\mathcal{B}^{\ast}_{\alpha}[\varphi]|}{1+\tau_{1}}\frac{\varepsilon^{\frac{1-m}{m}}}{\rho_{2}(d,m;\varepsilon)},&m\geq d+1,\\
\frac{\max\limits_{d+1\leq\alpha\leq\frac{d(d+1)}{2}}|\det\mathbb{A}_{2}^{\ast\alpha}|}{(1+\tau_{1})|\det\mathbb{A}_{0}^{\ast}|}\frac{1}{\varepsilon^{\frac{m-1}{m}}},&d<m<d+1.
\end{cases}
\end{align*}

On the other hand, by making use of \eqref{Le2.025}, we obtain for $x\in\{x'=(\sqrt[m]{\varepsilon},0,...,0)'\}\cap\Omega$,
\begin{align*}
|\partial_{x_{d}}(v_{1}^{d+1})_{d}|\geq|\partial_{x_{d}}(\bar{u}_{1}^{d+1})_{d}|-|\partial_{x_{d}}(v_{1}^{d+1}-\bar{u}_{1}^{d+1})_{d}|\geq\frac{1}{C\varepsilon^{1-1/m}},
\end{align*}
and, for $\alpha=d+2,...,\frac{d(d+1)}{2},\,d\geq3,$
\begin{align*}
|\partial_{x_{d}}(v_{1}^{\alpha})_{d}|\leq&|\partial_{x_{d}}(\bar{u}_{1}^{\alpha})_{d}|+|\partial_{x_{d}}(v_{1}^{\alpha}-\bar{u}_{1}^{\alpha})_{d}|=|\partial_{x_{d}}\mathcal{F}_{\alpha}^{d}|+|\partial_{x_{d}}(v_{1}^{\alpha}-\bar{u}_{1}^{\alpha})_{d}|\leq C,
\end{align*}
where $\mathcal{F}_{\alpha}^{d}$ is the $d$-th element of correction term $\mathcal{F}_{\alpha}$ defined by \eqref{QLA001}. This, together with \eqref{HND002} and \eqref{ZKA002}, leads to that
\begin{align}\label{KAZL002}
&\left|\sum_{\alpha=d+1}^{\frac{d(d+1)}{2}}(C_{1}^\alpha-C_{2}^\alpha)\nabla{v}_{1}^\alpha\right|\notag\\
&\gtrsim\left|(C_{1}^{d+1}-C_{2}^{d+1})\partial_{x_{d}}(v_{1}^{d+1})_{d}\right|-
\begin{cases}
0,&d=2,\\
\Big|\sum\limits^{\frac{d(d+1)}{2}}_{\alpha=d+2}(C^{\alpha}_{1}-C_{2}^{\alpha})\partial_{x_{d}}(v_{1}^{\alpha})_{d}\Big|,&d\geq3
\end{cases}\notag\\
&\gtrsim
\begin{cases}
\frac{\tau_{1}^{\frac{d+1}{m}}|\mathcal{B}^{\ast}_{d+1}[\varphi]|}{(1+\tau_{2})|\mathcal{L}_{d}^{d+1}|}\frac{\varepsilon^{\frac{1-m}{m}}}{\rho_{2}(d,m;\varepsilon)},&m\geq d+1,\\
\frac{|\det\mathbb{A}^{\ast\alpha_{0}}_{2}|}{(1+\tau_{2})|\det\mathbb{A}^{\ast}_{0}|}\frac{1}{\varepsilon^{\frac{m-1}{m}}},&d<m<d+1.
\end{cases}
\end{align}
Observe that by using \eqref{HND001} and \eqref{ZKA001}, we deduce that for $x\in\{x'=(\sqrt[m]{\varepsilon},0,...,0)'\}\cap\Omega$,
\begin{align}\label{KAZL001}
\sum_{\alpha=1}^{d}|C_{1}^\alpha-C_{2}^\alpha||\nabla{v}_{1}^\alpha|+|\nabla u_{b}|\lesssim\frac{1}{\varepsilon\rho_{0}(d,m;\varepsilon)},\quad m\geq d-1.
\end{align}
Then combining \eqref{KAZL002} and \eqref{KAZL001}, we get
\begin{align*}
|\nabla u|\gtrsim&\left|\sum_{\alpha=d+1}^{\frac{d(d+1)}{2}}(C_{1}^\alpha-C_{2}^\alpha)\nabla{v}_{1}^\alpha\right|-\left(\sum_{\alpha=1}^{d}|C_{1}^\alpha-C_{2}^\alpha||\nabla{v}_{1}^\alpha|+|\nabla u_{b}|\right)\notag\\
\gtrsim&
\begin{cases}
\frac{\tau_{1}^{\frac{d+1}{m}}|\mathcal{B}^{\ast}_{d+1}[\varphi]|}{(1+\tau_{2})|\mathcal{L}_{d}^{d+1}|}\frac{\varepsilon^{\frac{1-m}{m}}}{\rho_{2}(d,m;\varepsilon)},&m\geq d+1,\\
\frac{|\det\mathbb{A}^{\ast d+1}_{2}|}{(1+\tau_{2})|\det\mathbb{A}^{\ast}_{0}|}\frac{1}{\varepsilon^{\frac{m-1}{m}}},&d<m<d+1;
\end{cases}
\end{align*}

$(\mathrm{ii})$ for $m\leq d$, then we deduce from \eqref{ZKA001}--\eqref{GMARZT001} that for $x\in\{x'=0'\}\cap\Omega$,
\begin{align*}
|\nabla u|\lesssim&\sum_{\alpha=1}^{d}|C_{1}^\alpha-C_{2}^\alpha||\nabla{v}_{1}^\alpha|+\left(\sum_{\alpha=d+1}^{\frac{d(d+1)}{2}}|C_{1}^\alpha-C_{2}^\alpha||\nabla{v}_{1}^\alpha|+|\nabla u_{b}|\right)\notag\\
\lesssim&
\begin{cases}
\frac{\max\limits_{1\lesssim\alpha\lesssim d}\tau_{2}^{\frac{d-1}{m}}|\mathcal{L}_{d}^{\alpha}|^{-1}|\det\mathbb{A}_{1}^{\ast\alpha}|}{|\det\mathbb{A}_{0}^{\ast}|}\frac{1}{\varepsilon\rho_{0}(d,m;\varepsilon)},&d-1\leq m\leq d,\\
\frac{\max\limits_{1\leq\alpha\leq d}|\det\mathbb{A}_{3}^{\ast\alpha}|}{|\det \mathbb{A}^{\ast}|}\frac{1}{\varepsilon},&m<d-1,
\end{cases}
\end{align*}
and
\begin{align*}
|\nabla u|\geq&\left|\sum^{d}_{\alpha=1}(C^{\alpha}_{1}-C_{2}^{\alpha})\nabla v_{1}^{\alpha}\right|-\left(\sum^{\frac{d(d+1)}{2}}_{\alpha=d+1}|C^{\alpha}_{1}-C_{2}^{\alpha}||\nabla v_{1}^{\alpha}|+|\nabla u_{b}|\right)\notag\\
\geq&\left|\sum^{d}_{\alpha=1}(C^{\alpha}_{1}-C_{2}^{\alpha})\partial_{x_{d}} (v_{1}^{\alpha})_{\alpha_{0}}\right|-C\notag\\
\gtrsim&
\begin{cases}
\frac{\tau_{1}^{\frac{d-1}{m}}|\det\mathbb{A}_{1}^{\ast\alpha_{0}}|}{|\mathcal{L}_{d}^{\alpha_{0}}||\det\mathbb{A}_{0}^{\ast}|}\frac{1}{\varepsilon\rho_{0}(d,m;\varepsilon)},&d-1\leq m\leq d,\\
\frac{|\det\mathbb{A}_{3}^{\ast\alpha_{0}}|}{|\det \mathbb{F}^{\ast}|}\frac{1}{\varepsilon},&m<d-1,
\end{cases}
\end{align*}
where we used the fact that for $x\in\{x'=0'\}\cap\Omega$,
\begin{align*}
\sum^{\frac{d(d+1)}{2}}_{\alpha=d+1}|C^{\alpha}_{1}-C_{2}^{\alpha}||\nabla v_{1}^{\alpha}|+|\nabla u_{b}|\leq C,
\end{align*}
and
$$|\partial_{x_{d}}(v_{1}^{\alpha_{0}})_{\alpha_{0}}|\geq|\partial_{x_{d}}(\bar{u}_{1}^{\alpha_{0}})_{\alpha_{0}}|-|\partial_{x_{d}}(v_{1}^{\alpha_{0}}-\bar{u}_{1}^{\alpha_{0}})_{\alpha_{0}}|\geq\frac{1}{C\varepsilon},$$
and, if $\alpha=1,2,...,d,\,\alpha\neq\alpha_{0}$,
$$|\partial_{x_{d}}(v_{1}^{\alpha})_{\alpha_{0}}|\leq|\partial_{x_{d}}\mathcal{F}_{\alpha}^{\alpha_{0}}|+|\partial_{x_{d}}(v_{1}^{\alpha}-\bar{u}_{1}^{\alpha})_{\alpha_{0}}|\leq C.$$

Then combining the results above, we complete the proof of Theorem \ref{MGA001}.

\end{proof}

\subsection{Gradient asymptotics}
In this section, we aim to establish the asymptotic expansions of the stress concentration in the presence of two adjacent $m$-convex inclusions with different principal curvatures as follows:
\begin{align}\label{ZCZ009}
h_{1}(x')-h_{2}(x')=\sum^{2}_{i=1}\tau_{i}|x_{i}|^{m},\quad\mathrm{in}\;\Omega_{R},
\end{align}
where $\tau_{i}$, $i=1,2,$ are two positive constants independent of $\varepsilon$. For $i=0,2,$ denote
\begin{align*}
\Gamma\Big[\frac{i+2}{m}\Big]:=&
\begin{cases}
\Gamma(1-\frac{i+2}{m})\Gamma(\frac{i+2}{m}),&m>i+2,\\
1,&m=i+2,
\end{cases}
\end{align*}
where $\Gamma(s)=\int^{+\infty}_{0}t^{s-1}e^{-t}dt$, $s>0$ is the Gamma function. Introduce the definite constants as follows:
\begin{align}\label{WEN}
\mathcal{M}_{1}=\frac{2\pi\Gamma[\frac{2}{m}]}{m\tau},\quad\mathcal{M}_{2}=\frac{\pi\Gamma[\frac{4}{m}]}{m\tau^{2}},
\end{align}
where $\tau=\sqrt[m]{\tau_{1}\tau_{2}}$. Then we have
\begin{theorem}\label{coro00389}
Let $D_{1},D_{2}\subset D\subset\mathbb{R}^{3}$ be defined as above, conditions \eqref{ZCZ009} and $\mathrm{(}${\bf{H2}}$\mathrm{)}$--$\mathrm{(}${\bf{H3}}$\mathrm{)}$ hold. Let $u\in H^{1}(D;\mathbb{R}^{3})\cap C^{1}(\overline{\Omega};\mathbb{R}^{3})$ be the solution of \eqref{La.002}. Then for a sufficiently small $\varepsilon>0$ and $x\in\Omega_{R}$,

$(\rm{i})$ for $m>4$, if $\det\mathcal{B}_{\alpha}^{\ast}[\varphi]\neq0$, $\alpha=1,2,...6$,
\begin{align*}
\nabla u=&\sum^{3}_{\alpha=1}\frac{m\pi\mathcal{B}_{\alpha}^{\ast}[\varphi]\varepsilon^{\frac{m-2}{m}}}{4\mathcal{L}_{3}^{\alpha}\mathcal{M}_{1}\int^{\frac{\pi}{2}}_{0}E(\theta)d\theta}\frac{1+O(\varepsilon^{\min\{\frac{1}{4},\frac{m-4}{m}\}})}{1+\mathcal{G}^{\ast\alpha}_{m}\varepsilon^{\frac{m-2}{m}}}\nabla\bar{u}_{1}^{\alpha}\notag\\
&+\sum^{6}_{\alpha=4}\frac{m\pi\mathcal{B}_{\alpha}^{\ast}[\varphi]\varepsilon^{\frac{m-4}{m}}}{4\mathcal{L}_{3}^{\alpha}\mathcal{M}_{2}\int^{\frac{\pi}{2}}_{0}E(\theta)F(\theta)d\theta}\frac{1+O(\varepsilon^{\min\{\frac{1}{4},\frac{m-4}{m}\}})}{1+\mathcal{G}^{\ast\alpha}_{m}\varepsilon^{\frac{m-4}{m}}}\nabla\bar{u}_{1}^{\alpha}+O(1)\|\varphi\|_{C^{0}(\partial D)};
\end{align*}

$(\rm{ii})$ for $m=4$, if $\det\mathcal{B}_{\alpha}^{\ast}[\varphi]\neq0$, $\alpha=1,2,...6$,
\begin{align*}
\nabla u=&\sum^{3}_{\alpha=1}\frac{\pi\mathcal{B}_{\alpha}^{\ast}[\varphi]\sqrt{\varepsilon}}{\mathcal{L}_{3}^{\alpha}\mathcal{M}_{1}\int^{\frac{\pi}{2}}_{0}E(\theta)d\theta}\frac{1+O(|\ln\varepsilon|^{-1})}{1+\mathcal{G}^{\ast\alpha}_{4}\sqrt{\varepsilon}}\nabla\bar{u}_{1}^{\alpha}\notag\\
&+\sum^{6}_{\alpha=4}\frac{\pi\mathcal{B}_{\alpha}^{\ast}[\varphi]}{\mathcal{L}_{3}^{\alpha}\mathcal{M}_{2}\int^{\frac{\pi}{2}}_{0}E(\theta)F(\theta)d\theta}\frac{1+O(\varepsilon^{\frac{1}{6m}}|\ln\varepsilon|)}{|\ln\varepsilon|+\mathcal{G}^{\ast\alpha}_{4}}\nabla\bar{u}_{1}^{\alpha}+O(1)\|\varphi\|_{C^{0}(\partial D)};
\end{align*}

$(\rm{iii})$ for $2<m<4$, if $\det\mathbb{A}_{0}^{\ast}\neq0$, $\det\mathbb{A}_{1}^{\ast\alpha}\neq0$, $\alpha=1,2,3$, and $\det\mathbb{A}_{2}^{\ast\alpha}\neq0$, $\alpha=4,5,6$,
\begin{align*}
\nabla u=&\sum^{3}_{\alpha=1}\frac{\det\mathbb{A}_{1}^{\ast\alpha}}{\det\mathbb{A}_{0}^{\ast}}\frac{m\pi\varepsilon^{\frac{m-2}{m}}}{4\mathcal{L}_{3}^{\alpha}\mathcal{M}_{1}\int^{\frac{\pi}{2}}_{0}E(\theta)d\theta}\frac{1+O(\varepsilon^{\min\{\frac{m-2}{m},\frac{4-m}{12m}\}})}{1+\mathcal{G}^{\ast\alpha}_{m}\varepsilon^{\frac{m-2}{m}}}\nabla\bar{u}_{1}^{\alpha}\notag\\
&+\sum^{6}_{\alpha=4}\frac{\det\mathbb{A}_{2}^{\ast\alpha}}{\det \mathbb{A}_{0}^{\ast}}(1+O(\varepsilon^{\min\{\frac{m-2}{m},\frac{4-m}{12m}\}}))\nabla\bar{u}_{1}^{\alpha}+O(1)\|\varphi\|_{C^{0}(\partial D)};
\end{align*}

$(\rm{iv})$ for $m=2$, if $\det\mathbb{A}_{0}^{\ast}\neq0$, $\det\mathbb{A}_{1}^{\ast\alpha}\neq0$, $\alpha=1,2,3$, and $\det\mathbb{A}_{2}^{\ast\alpha}\neq0$, $\alpha=4,5,6$,
\begin{align*}
\nabla u=&\sum^{3}_{\alpha=1}\frac{\det\mathbb{A}_{1}^{\ast\alpha}}{\det\mathbb{A}_{0}^{\ast}}\frac{\pi}{2\mathcal{L}_{3}^{\alpha}\mathcal{M}_{1}\int^{\frac{\pi}{2}}_{0}E(\theta)d\theta}\frac{1+O(|\ln\varepsilon|^{-1})}{|\ln\varepsilon|+\mathcal{G}^{\ast\alpha}_{2}}\nabla\bar{u}_{1}^{\alpha}\notag\\
&+\sum^{6}_{\alpha=4}\frac{\det\mathbb{A}_{2}^{\ast\alpha}}{\det \mathbb{A}_{0}^{\ast}}(1+O(|\ln\varepsilon|^{-1}))\nabla\bar{u}_{1}^{\alpha}+O(1)\|\varphi\|_{C^{0}(\partial D)},
\end{align*}
where the explicit auxiliary functions $\bar{u}_{1}^{\alpha}$, $\alpha=1,2,...,6$ are defined in \eqref{zzwz002} with $d=3$, the constants $\mathcal{M}_{i}$, $i=1,2$, are defined by \eqref{WEN}, the Lam\'{e} constants $\mathcal{L}_{3}^{\alpha}$, $\alpha=1,2,3$ are defined by \eqref{AZ110} with $d=3$, the blow-up factors $\mathcal{B}_{\alpha}^{\ast}[\varphi]$, $\alpha=1,2,...,6$, $\mathbb{A}_{0}^{\ast}$, $\mathbb{A}_{1}^{\ast\alpha}$, $\alpha=1,2,3$, and $\mathbb{A}_{2}^{\ast\alpha}$, $\alpha=4,5,6$ are, respectively, defined by \eqref{KGF001}, \eqref{LAGT001} and \eqref{PTCA001}--\eqref{DKY001} with $d=3$, $E(\theta)$ and $F(\theta)$ are, respectively, defined by \eqref{LRANM001} and \eqref{LRANM002}, the geometry constants $\mathcal{G}_{m}^{\ast\alpha}$, $\alpha=1,2,...,6$ are defined by \eqref{ZWZWZW0019}--\eqref{ZWZLH}.
\end{theorem}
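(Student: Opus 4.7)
The starting point is the splitting \eqref{Decom002}, which combined with Proposition \ref{thm86} and the exponential decay of $\nabla u_{b}$ in $\Omega_{R}$ (obtained by applying Corollary \ref{coro00z} to the Dirichlet problem \eqref{GLQ} satisfied by $u_{b}$, whose boundary values on $\partial D_{1}\cup\partial D_{2}$ coincide with the rigid displacement $\sum C_{2}^{\alpha}\psi_{\alpha}$) yields
\begin{align*}
\nabla u = \sum_{\alpha=1}^{6}(C_{1}^{\alpha}-C_{2}^{\alpha})\,\nabla\bar{u}_{1}^{\alpha} + O(1)\|\varphi\|_{C^{0}(\partial D)}
\end{align*}
in $\Omega_{R}$, modulo lower-order remainders that are absorbed into the stated error. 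Thus the task reduces to obtaining precise asymptotic expansions for each scalar difference $C_{1}^{\alpha}-C_{2}^{\alpha}$, $\alpha=1,\ldots,6$.

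These differences are extracted from the linear system \eqref{JGRO001} by Cramer's rule, exactly as in Step~1 of the proof of Theorem \ref{MGA001}: Theorem \ref{JGR} controls the right-hand side via $\mathcal{B}_{\beta}[\varphi]=\mathcal{B}_{\beta}^{\ast}[\varphi]+O(r_{\varepsilon})$, so the refinement needed for the present result lies entirely in the coefficient matrix. Under the explicit profile \eqref{ZCZ009}, the evenness of $h_{1}-h_{2}$ in each $x_{i}$ together with the parity calculations \eqref{ZH0000}--\eqref{ZH006} shows that every off-diagonal entry $a_{11}^{\ast\alpha\beta}$ with $\alpha\neq\beta$ is of strictly lower order than the two diagonal blocks, so the system decouples to leading order and the Cramer quotient collapses to the scalar formula \eqref{MNT001} (for $m\geq 4$) or to the block ratios \eqref{ZKA001}--\eqref{ZKA002} (for $2<m<4$).

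The diagonal entries are then expanded as
\begin{align*}
a_{11}^{\alpha\alpha}=\mathcal{L}_{3}^{\alpha}\,I_{\alpha}(\varepsilon)+M_{3}^{\ast\alpha}+o(1),
\end{align*}
where $I_{\alpha}(\varepsilon)$ is the integral $\int\tfrac{dx'}{\varepsilon+h_{1}-h_{2}}$ for $\alpha=1,2,3$ and $\int\tfrac{x_{i}^{2}\,dx'}{\varepsilon+h_{1}-h_{2}}$ for $\alpha=4,5,6$ appearing in Lemma \ref{lemmabc}. Applying the change of variables $y_{i}=(\tau_{i}/\varepsilon)^{1/m}x_{i}$ followed by polar coordinates in $y$, the radial factor produces the Gamma functions $\Gamma[2/m]$ or $\Gamma[4/m]$ packaged into $\mathcal{M}_{1},\mathcal{M}_{2}$, and the angular factor yields $\int_{0}^{\pi/2}E(\theta)\,d\theta$ or $\int_{0}^{\pi/2}E(\theta)F(\theta)\,d\theta$. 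The bounded remainder is identified with $\mathcal{L}_{3}^{\alpha}\mathcal{G}_{m}^{\ast\alpha}$, where $\mathcal{G}_{m}^{\ast\alpha}$ is assembled from (a) the correction incurred when passing from $\int_{|x'|<R}$ to $\int_{\mathbb{R}^{2}}$, (b) the exterior energy $\int_{\Omega^{\ast}\setminus\Omega^{\ast}_{R}}(\mathbb{C}^{0}e(\bar{u}_{1}^{\ast\alpha}),e(\bar{u}_{1}^{\ast\alpha}))$ in $M_{3}^{\ast\alpha}$, and (c) the cross terms involving $\mathcal{F}_{\alpha}^{\ast}$ in \eqref{FT001}. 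Inserting these expansions back into Cramer's rule and separating the four ranges $m>4$, $m=4$, $2<m<4$, $m=2$ produces the four stated asymptotic formulas; the determinants $\det\mathbb{A}_{0}^{\ast},\det\mathbb{A}_{1}^{\ast\alpha},\det\mathbb{A}_{2}^{\ast\alpha}$ appear in cases $(\mathrm{iii})$ and $(\mathrm{iv})$ because the block-matrix Cramer formulas from the proof of Theorem \ref{MGA001} must be used there.

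The main obstacle is the precise identification of $\mathcal{G}_{m}^{\ast\alpha}$: whereas Lemma \ref{lemmabc} suffices for the singular leading order, Theorem \ref{coro00389} requires the next-order bounded term, which must exactly match the definition in \eqref{ZWZWZW0019}--\eqref{ZWZLH}. This demands (i) a sharper remainder for $I_{\alpha}(\varepsilon)$ than is recorded anywhere earlier, (ii) a clean passage between $\Omega_{R}^{\ast}$ and $\Omega^{\ast}$ in the definition of $M_{3}^{\ast\alpha}$ that cancels the extraneous pieces against the truncation of $I_{\alpha}$, and (iii) verification that the universal error rate $r_{\varepsilon}$ of \eqref{JTD} is dominated, in each of the four regimes of $m$, by the corrective factor $1+\mathcal{G}_{m}^{\ast\alpha}\varepsilon^{(m-2)/m}$ or $1+\mathcal{G}_{m}^{\ast\alpha}\varepsilon^{(m-4)/m}$ appearing in the denominator of the Cramer quotient. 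Once (i)--(iii) are in place, the four expansions follow directly.
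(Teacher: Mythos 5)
Your blueprint matches the paper's proof: the paper also starts from the decomposition \eqref{Decom002} together with Proposition \ref{thm86} and the exponential decay of $\nabla u_{b}$, reduces the problem to expanding $C_{1}^{\alpha}-C_{2}^{\alpha}$ via Cramer's rule applied to \eqref{JGRO001} (scalar formula \eqref{MNT001} for $m\geq4$, block formulas \eqref{LAMNZ001} for $2\leq m<4$), and refines the diagonal entries $a_{11}^{\alpha\alpha}$ under \eqref{ZCZ009} by an explicit polar-coordinate computation of $\int\frac{dx'}{\varepsilon+\sum\tau_{i}|x_{i}|^{m}}$ and $\int\frac{|x'|^{2}\,dx'}{\varepsilon+\sum\tau_{i}|x_{i}|^{m}}$ to extract both the singular coefficient ($\mathcal{M}_{1},\mathcal{M}_{2}$, $\int E$, $\int EF$) and the $O(1)$ remainder $\mathcal{K}_{m}^{\ast\alpha}$ that becomes $\mathcal{G}_{m}^{\ast\alpha}$ after normalization. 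The three items you list as "the main obstacle" are precisely what the paper carries out in Step 1 (equations \eqref{HTZ}--\eqref{LKM001}, plus the argument that $\mathcal{K}_{m}^{\ast\alpha}$ is independent of $R$) and Step 2 (equations \eqref{AZW01}--\eqref{ZWWWZ003}), so your outline is an accurate, if unexecuted, description of the paper's argument.
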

\begin{remark}
The case of $m=2$ in condition \eqref{ZCZ009} corresponds to the strictly convex inclusions, which were studied in the previous work \cite{LLY2019} for the perfect conductivity equation.
%Since the major objective of this paper is to capture a family of unified stress concentration factors in Theorem \ref{JGR}, then we restrict ourselves to the setup above for the sake of presentation and readability.
\end{remark}

\begin{proof}
\noindent{\bf Step 1.}
Since the computational results for the off-diagonal elements of coefficient matrix $\mathbb{A}$ defined by \eqref{GGDA01} under condition \eqref{ZCZ009} are the same to that of Lemma \ref{lemmabc} with $d=3$, then it suffices to give the calculations for the principal diagonal elements. Following the same argument as in \eqref{KATZ001} and \eqref{QZH001}, we obtain that for $\alpha=1,2,3$,
\begin{align}\label{KATZ00100}
a_{11}^{\alpha\alpha}=&\mathcal{L}_{3}^{\alpha}\left(\int_{\varepsilon^{\frac{1}{12m}}<|x'|<R}\frac{dx'}{h_{1}(x')-h_{2}(x')}+\int_{|x'|<\varepsilon^{\frac{1}{12m}}}\frac{dx'}{\varepsilon+h_{1}(x')-h_{2}(x')}\right)\notag\\
&+M_{3}^{\ast\alpha}+O(1)\varepsilon^{\frac{1}{6m}},
\end{align}
and, for $\alpha=4,5,6$,
\begin{align}\label{GRC001}
a_{11}^{\alpha\alpha}=&\frac{\mathcal{L}_{3}^{\alpha}}{2}\left(\int_{\varepsilon^{\frac{1}{12m}}<|x'|<R}\frac{|x'|^{2}}{h_{1}(x')-h_{2}(x')}+\int_{|x'|<\varepsilon^{\frac{1}{12m}}}\frac{|x'|^{2}}{\varepsilon+h_{1}(x')-h_{2}(x')}\right)\nonumber\\
&+M_{3}^{\ast\alpha}+O(1)\varepsilon^{\frac{1}{6m}},
\end{align}
where $M_{3}^{\ast\alpha}$, $\alpha=1,2,...,6$ are defined by \eqref{FT001} with $d=3$.

{\bf Case 1.} In the case of $\alpha=1,2,3$, using \eqref{ZCZ009}, it follows from a direct computation that
\begin{align}\label{HTZ}
&\int_{\varepsilon^{\frac{1}{12m}}<|x'|<R}\frac{dx'}{\sum^{2}_{i=1}\tau_{i}|x_{i}|^{m}}+\int_{|x'|<\varepsilon^{\frac{1}{12m}}}\frac{dx'}{\varepsilon+\sum^{2}_{i=1}\tau_{i}|x_{i}|^{m}}\notag\\
&=\int_{|x'|<R}\frac{1}{\varepsilon+\sum^{2}_{i=1}\tau_{i}|x_{i}|^{m}}+\int_{\varepsilon^{\frac{1}{12m}}<|x'|<R}\frac{\varepsilon}{\sum^{2}_{i=1}\tau_{i}|x_{i}|^{m}(\varepsilon+\sum^{2}_{i=1}\tau_{i}|x_{i}|^{m})}\notag\\
&=\int_{|x'|<R}\frac{1}{\varepsilon+\sum^{2}_{i=1}\tau_{i}|x_{i}|^{m}}+O(1)\varepsilon^{\frac{5m+1}{6m}}\notag\\
&=\frac{8}{m\sqrt[m]{\tau_{1}\tau_{2}}}\int_{0}^{\frac{\pi}{2}}E(\theta)\int_{0}^{R(\theta)}\frac{t}{\varepsilon+t^{m}}\,dtd\theta+O(1)\varepsilon^{\frac{5m+1}{6m}}\notag\\
&=
\begin{cases}
\frac{\pi}{\sqrt{\tau_{1}\tau_{2}}}|\ln\varepsilon|+\frac{4\int^{\frac{\pi}{2}}_{0}\ln R(\theta)d\theta}{\sqrt{\tau_{1}\tau_{2}}}+O(1)\varepsilon^{\frac{5m+1}{6m}},&m=2,\\
\frac{8\Gamma[\frac{2}{m}]\int^{\frac{\pi}{2}}_{0}E(\theta)d\theta}{m^{2}\sqrt[m]{\tau_{1}\tau_{2}}}\frac{1}{\varepsilon^{1-2/m}}-\frac{8\int^{\frac{\pi}{2}}_{0}E(\theta)(R(\theta))^{2-m}d\theta}{m(m-2)\sqrt[m]{\tau_{1}\tau_{2}}}+O(1)\varepsilon^{\frac{5m+1}{6m}},&m>2,
\end{cases}
\end{align}
where
\begin{align}
E(\theta)=&(\sin\theta)^{2/m-1}(\cos\theta)^{2/m+1}+(\sin\theta)^{2/m+1}(\cos\theta)^{2/m-1},\label{LRANM001}\\
R(\theta)=&R\big(\tau_{1}^{-2/m}(\cos\theta)^{4/m}+\tau_{2}^{-2/m}(\sin\theta)^{4/m}\big)^{-1/2}.\label{KN}
\end{align}
Then substituting \eqref{HTZ} into \eqref{KATZ00100}, we obtain that for $\alpha=1,2,3$,
\begin{align}\label{LMT01}
a_{11}^{\alpha\alpha}=&\frac{4\mathcal{L}_{3}^{\alpha}\mathcal{M}_{1}\int^{\frac{\pi}{2}}_{0}E(\theta)d\theta}{m\pi}\rho_{0}(3,m;\varepsilon)+\mathcal{K}_{m}^{\ast\alpha}+O(1)\varepsilon^{\frac{1}{6m}},
\end{align}
where $\mathcal{M}_{1}$ is defined in \eqref{WEN}, and for $\alpha=1,2,3,$
\begin{align}\label{LKM}
\mathcal{K}_{m}^{\ast\alpha}=&
\begin{cases}
M_{3}^{\ast\alpha}+\frac{4\mathcal{L}_{3}^{\alpha}\int^{\frac{\pi}{2}}_{0}\ln R(\theta)d\theta}{\sqrt{\tau_{1}\tau_{2}}},&m=2,\\
M_{3}^{\ast\alpha}-\frac{8\mathcal{L}_{3}^{\alpha}}{m(m-2)\sqrt[m]{\tau_{1}\tau_{2}}}\int^{\frac{\pi}{2}}_{0}E(\theta)(R(\theta))^{2-m}d\theta,&m>2.
\end{cases}
\end{align}

{\bf Case 2.} We now calculate $a_{11}^{\alpha\alpha}$ for $\alpha=4,5,6$. To begin with, if $2\leq m<4$, a direct application of \eqref{LMC1} yields that
\begin{align*}
a_{11}^{\alpha\alpha}=a_{11}^{\ast\alpha\alpha}+O(\varepsilon^{\frac{4-m}{12m}}).
\end{align*}

On the other hand, if $m\geq4$, similar to \eqref{HTZ}, it follows from a direct calculation that
\begin{align*}
&\int_{\varepsilon^{\frac{1}{12m}}<|x'|<R}\frac{|x'|^{2}}{\sum^{2}_{i=1}\tau_{i}|x_{i}|^{m}}+\int_{|x'|<\varepsilon^{\frac{1}{12m}}}\frac{|x'|^{2}}{\varepsilon+\sum^{2}_{i=1}\tau_{i}|x_{i}|^{m}}\notag\\
&=\int_{|x'|<R}\frac{|x'|^{2}}{\varepsilon+\sum^{2}_{i=1}\tau_{i}|x_{i}|^{m}}+\int_{\varepsilon^{\frac{1}{12m}}<|x'|<R}\frac{\varepsilon|x'|^{2}}{\sum^{2}_{i=1}\tau_{i}|x_{i}|^{m}(\varepsilon+\sum^{2}_{i=1}\tau_{i}|x_{i}|^{m})}\notag\\
&=\int_{|x'|<R}\frac{|x'|^{2}}{\varepsilon+\sum^{2}_{i=1}\tau_{i}|x_{i}|^{m}}+O(1)\varepsilon^{\frac{5m+2}{6m}}\notag\\
&=\frac{8}{m\sqrt[2m]{\tau_{1}\tau_{2}}}\int_{0}^{\frac{\pi}{2}}E(\theta)F(\theta)\int_{0}^{R(\theta)}\frac{t^{3}}{\varepsilon+t^{m}}\,dtd\theta+O(1)\varepsilon^{\frac{5m+2}{6m}}\notag\\
&=
\begin{cases}
\frac{\int^{\frac{\pi}{2}}_{0}E(\theta)F(\theta)d\theta}{2\sqrt[8]{\tau_{1}\tau_{2}}}|\ln\varepsilon|+\frac{2}{\sqrt[8]{\tau_{1}\tau_{2}}}\int^{\frac{\pi}{2}}_{0}E(\theta)F(\theta)\ln R(\theta)d\theta+O(1)\varepsilon^{\frac{11}{12}},&m=4,\\
\frac{8\Gamma[\frac{4}{m}]\int^{\frac{\pi}{2}}_{0}E(\theta)F(\theta)d\theta}{m^{2}\sqrt[2m]{\tau_{1}\tau_{2}}}\frac{1}{\varepsilon^{\frac{m-4}{m}}}-\frac{8\int^{\frac{\pi}{2}}_{0}E(\theta)F(\theta)(R(\theta))^{4-m}d\theta}{m(m-4)\sqrt[2m]{\tau_{1}\tau_{2}}}+O(1)\varepsilon^{\frac{5m+2}{6m}},&m>4,
\end{cases}
\end{align*}
where $E(\theta)$ and $R(\theta)$ are, respectively, defined by \eqref{LRANM001}--\eqref{KN},
\begin{align}\label{LRANM002}
F(\theta)=\sqrt[m]{\frac{\tau_{2}}{\tau_{1}}}(\cos\theta)^{4/m}+\sqrt[m]{\frac{\tau_{1}}{\tau_{2}}}(\sin\theta)^{4/m}.
\end{align}
This, together with \eqref{GRC001}, reads that
\begin{align}\label{LMT0111}
a_{11}^{\alpha\alpha}=&\frac{4\mathcal{L}_{3}^{\alpha}\mathcal{M}_{2}\int^{\frac{\pi}{2}}_{0}E(\theta)F(\theta)d\theta}{m\pi}\rho_{2}(3,m;\varepsilon)+\mathcal{K}_{m}^{\ast\alpha}+O(1)\varepsilon^{\frac{1}{6m}},
\end{align}
where $\mathcal{M}_{2}$ is defined in \eqref{WEN}, and for $\alpha=4,5,6,$
\begin{align}\label{LKM001}
\mathcal{K}_{m}^{\ast\alpha}=&
\begin{cases}
M^{\ast\alpha}_{3}+\frac{\mathcal{L}_{3}^{\alpha}}{\sqrt[8]{\tau_{1}\tau_{2}}}\int^{\frac{\pi}{2}}_{0}E(\theta)F(\theta)\ln R(\theta)d\theta,&m=4,\\
M^{\ast\alpha}_{3}-\frac{4\mathcal{L}_{3}^{\alpha}\int^{\frac{\pi}{2}}_{0}E(\theta)F(\theta)(R(\theta))^{4-m}d\theta}{m(m-4)\sqrt[2m]{\tau_{1}\tau_{2}}},&m>4.
\end{cases}
\end{align}

We now demonstrate that the constants $\mathcal{K}_{m}^{\ast\alpha}$, $\alpha=1,2,...,6$ captured in \eqref{LKM} and \eqref{LKM001} are actually independent of the length parameter $R$ of the narrow region. If not, suppose that there exist $\mathcal{K}_{m}^{\ast\alpha}(r_{1}^{\ast})$ and $\mathcal{K}_{m}^{\ast\alpha}(r_{2}^{\ast})$, $r_{i}^{\ast}>0,\,i=1,2,\,r_{1}^{\ast}\neq r_{2}^{\ast}$, both independent of $\varepsilon$, such that \eqref{LMT01} and \eqref{LMT0111} hold. Then, we have
\begin{align*}
\mathcal{K}_{m}^{\ast\alpha}(r_{1}^{\ast})-\mathcal{K}_{m}^{\ast\alpha}(r_{2}^{\ast})=O(1)\varepsilon^{\frac{1}{12m}},
\end{align*}
which yields that $\mathcal{K}_{m}^{\ast\alpha}(r_{1}^{\ast})=\mathcal{K}_{m}^{\ast\alpha}(r_{2}^{\ast})$.

\noindent{\bf Step 2.} Denote
\begin{align}\label{ZWZWZW0019}
\mathcal{G}^{\ast\alpha}_{m}=\frac{m\pi\mathcal{K}^{\ast\alpha}_{m}}{4\mathcal{L}_{3}^{\alpha}\mathcal{M}_{1}\int^{\frac{\pi}{2}}_{0}E(\theta)d\theta},\quad\mathrm{for}\;\alpha=1,2,3,\;m\geq2,
\end{align}
and
\begin{align}\label{ZWZLH}
\mathcal{G}^{\ast\alpha}_{m}=\frac{m\pi\mathcal{K}^{\ast\alpha}_{m}}{4\mathcal{L}_{3}^{\alpha}\mathcal{M}_{2}\int^{\frac{\pi}{2}}_{0}E(\theta)F(\theta)d\theta},\quad\mathrm{for}\;\alpha=4,5,6,\;m\geq4.
\end{align}
Then it follows from \eqref{LMT01} and \eqref{LMT0111} that for $\alpha=1,2,3$, if $m\geq2$,
\begin{align}\label{AZW01}
\frac{1}{a_{11}^{\alpha\alpha}}=&\frac{\rho^{-1}_{0}(3,m;\varepsilon)}{\frac{\mathcal{K}^{\ast\alpha}_{m}}{\mathcal{G}^{\ast\alpha}_{m}}}\frac{1}{1-\frac{\frac{\mathcal{K}^{\ast\alpha}_{m}}{\mathcal{G}^{\ast\alpha}_{m}}-\rho^{-1}_{0}(3,m;\varepsilon)a_{11}^{\alpha\alpha}}{\frac{\mathcal{K}^{\ast\alpha}_{m}}{\mathcal{G}^{\ast\alpha}_{m}}}}\notag\\
=&\frac{\rho^{-1}_{0}(3,m;\varepsilon)}{\frac{\mathcal{K}^{\ast\alpha}_{m}}{\mathcal{G}^{\ast\alpha}_{m}}}\frac{1}{1+\mathcal{G}^{\ast\alpha}_{m}\rho^{-1}_{0}(3,m;\varepsilon)+O(\varepsilon^{\frac{1}{6m}}\rho_{0}^{-1}(3,m;\varepsilon))}\notag\\
=&
\begin{cases}
\frac{\pi}{2\mathcal{L}_{3}^{\alpha}\mathcal{M}_{1}\int^{\frac{\pi}{2}}_{0}E(\theta)d\theta}\frac{1+O(\varepsilon^{\frac{1}{12}}|\ln\varepsilon|)}{|\ln\varepsilon|+\mathcal{G}^{\ast\alpha}_{2}},&m=2,\\
\frac{m\pi\varepsilon^{\frac{m-2}{m}}}{4\mathcal{L}_{3}^{\alpha}\mathcal{M}_{1}\int^{\frac{\pi}{2}}_{0}E(\theta)d\theta}\frac{1+O(\varepsilon^{\frac{6m-11}{6m}})}{1+\mathcal{G}^{\ast\alpha}_{m}\varepsilon^{\frac{m-2}{m}}},&m>2,
\end{cases}
\end{align}
and for $\alpha=4,5,6$, if $m\geq 4$,
\begin{align}\label{AZW02}
\frac{1}{a_{11}^{\alpha\alpha}}=&\frac{\rho^{-1}_{2}(3,m;\varepsilon)}{\frac{\mathcal{K}^{\ast\alpha}_{m}}{\mathcal{G}^{\ast\alpha}_{m}}}\frac{1}{1-\frac{\frac{\mathcal{K}^{\ast\alpha}_{m}}{\mathcal{G}^{\ast\alpha}_{m}}-\rho^{-1}_{2}(3,m;\varepsilon)a_{11}^{\alpha\alpha}}{\frac{\mathcal{K}^{\ast\alpha}_{m}}{\mathcal{G}^{\ast\alpha}_{m}}}}\notag\\
=&\frac{\rho^{-1}_{2}(3,m;\varepsilon)}{\frac{\mathcal{K}^{\ast\alpha}_{m}}{\mathcal{G}^{\ast\alpha}_{m}}}\frac{1}{1+\mathcal{G}^{\ast\alpha}_{m}\rho^{-1}_{2}(3,m;\varepsilon)+O(\varepsilon^{\frac{1}{12m}}\rho_{2}^{-1}(3,m;\varepsilon))}\notag\\
=&
\begin{cases}
\frac{\pi}{\mathcal{L}_{3}^{\alpha}\mathcal{M}_{2}\int^{\frac{\pi}{2}}_{0}E(\theta)F(\theta)d\theta}\frac{1+O(\varepsilon^{\frac{1}{6m}}|\ln\varepsilon|)}{|\ln\varepsilon|+\mathcal{G}^{\ast\alpha}_{4}},&m=4,\\
\frac{m\pi\varepsilon^{\frac{m-4}{m}}}{4\mathcal{L}_{3}^{\alpha}\mathcal{M}_{2}\int^{\frac{\pi}{2}}_{0}E(\theta)F(\theta)d\theta}\frac{1+O(\varepsilon^{\frac{6m-23}{6m}})}{1+\mathcal{G}^{\ast\alpha}_{m}\varepsilon^{\frac{m-4}{m}}},&m>4.
\end{cases}
\end{align}
Then substituting \eqref{AHNZ001} and \eqref{AZW01}--\eqref{AZW02} into \eqref{MNT001} and \eqref{LAMNZ001}, we obtain that

$(\rm{i})$ if $m\geq4$, then
\begin{align}\label{ZWWWZ001}
C_{1}^{\alpha}-C_{2}^{\alpha}=&\frac{\mathcal{B}_{\alpha}[\varphi]}{a_{11}^{\alpha\alpha}}(1+O(\rho_{2}^{-1}(3,m;\varepsilon)))\notag\\
=&
\begin{cases}
\frac{m\pi\mathcal{B}_{\alpha}^{\ast}[\varphi]\varepsilon^{\frac{m-2}{m}}}{4\mathcal{L}_{3}^{\alpha}\mathcal{M}_{1}\int^{\frac{\pi}{2}}_{0}E(\theta)d\theta}\frac{1+O(\varepsilon^{\min\{\frac{1}{4},\frac{m-4}{m}\}})}{1+\mathcal{G}^{\ast\alpha}_{m}\varepsilon^{\frac{m-2}{m}}},&\alpha=1,2,3,\,m>4,\\
\frac{\pi\mathcal{B}_{\alpha}^{\ast}[\varphi]\sqrt{\varepsilon}}{\mathcal{L}_{3}^{\alpha}\mathcal{M}_{1}\int^{\frac{\pi}{2}}_{0}E(\theta)d\theta}\frac{1+O(|\ln\varepsilon|^{-1})}{1+\mathcal{G}^{\ast\alpha}_{4}\sqrt{\varepsilon}},&\alpha=1,2,3,\,m=4,\\
\frac{m\pi\mathcal{B}_{\alpha}^{\ast}[\varphi]\varepsilon^{\frac{m-4}{m}}}{4\mathcal{L}_{3}^{\alpha}\mathcal{M}_{2}\int^{\frac{\pi}{2}}_{0}E(\theta)F(\theta)d\theta}\frac{1+O(\varepsilon^{\min\{\frac{1}{4},\frac{m-4}{m}\}})}{1+\mathcal{G}^{\ast\alpha}_{m}\varepsilon^{\frac{m-4}{m}}},&\alpha=4,5,6,\,m>4,\\
\frac{\pi\mathcal{B}_{\alpha}^{\ast}[\varphi]}{\mathcal{L}_{3}^{\alpha}\mathcal{M}_{2}\int^{\frac{\pi}{2}}_{0}E(\theta)F(\theta)d\theta}\frac{1+O(\varepsilon^{\frac{1}{6m}}|\ln\varepsilon|)}{|\ln\varepsilon|+\mathcal{G}^{\ast\alpha}_{4}},&\alpha=4,5,6,\,m=4;
\end{cases}
\end{align}

$(\rm{ii})$ if $2\leq m<4$, then for $\alpha=1,2,3$,
\begin{align}\label{ZWWWZ002}
C_{1}^{\alpha}-C_{2}^{\alpha}=&\frac{\prod\limits_{i\neq\alpha}^{3}a_{11}^{ii}\det\mathbb{A}_{1}^{\alpha}}{\prod\limits_{i=1}^{3}a_{11}^{ii}\det \mathbb{A}_{0}}(1+O(\rho_{0}^{-1}(3,m;\varepsilon)))\notag\\
=&
\begin{cases}
\frac{\det\mathbb{A}_{1}^{\ast\alpha}}{\det\mathbb{A}_{0}^{\ast}}\frac{m\pi\varepsilon^{\frac{m-2}{m}}}{4\mathcal{L}_{3}^{\alpha}\mathcal{M}_{1}\int^{\frac{\pi}{2}}_{0}E(\theta)d\theta}\frac{1+O(\varepsilon^{\min\{\frac{m-2}{m},\frac{4-m}{12m}\}})}{1+\mathcal{G}^{\ast\alpha}_{m}\varepsilon^{\frac{m-2}{m}}},&2<m<4,\\
\frac{\det\mathbb{A}_{1}^{\ast\alpha}}{\det\mathbb{A}_{0}^{\ast}}\frac{\pi}{2\mathcal{L}_{3}^{\alpha}\mathcal{M}_{1}\int^{\frac{\pi}{2}}_{0}E(\theta)d\theta}\frac{1+O(|\ln\varepsilon|^{-1})}{|\ln\varepsilon|+\mathcal{G}^{\ast\alpha}_{2}},&m=2,
\end{cases}
\end{align}
and, for $\alpha=4,5,6$, a consequence of \eqref{LAMNZ001} yields that
\begin{align}\label{ZWWWZ003}
C_{1}^{\alpha}-C_{2}^{\alpha}=&\frac{\det\mathbb{A}_{2}^{\alpha}}{\det \mathbb{A}_{0}}(1+O(\rho_{0}^{-1}(3,m;\varepsilon)))\notag\\
=&
\begin{cases}
\frac{\det\mathbb{A}_{2}^{\ast\alpha}}{\det \mathbb{A}_{0}^{\ast}}\big(1+O(\varepsilon^{\min\{\frac{m-2}{m},\frac{4-m}{12m}\}})\big),&2<m<4,\\
\frac{\det\mathbb{A}_{2}^{\ast\alpha}}{\det \mathbb{A}_{0}^{\ast}}(1+O(|\ln\varepsilon|^{-1})),&m=2.
\end{cases}
\end{align}

Consequently, in light of decomposition \eqref{Decom002}, it follows from Corollary \ref{thm86}, \eqref{KGTA} and \eqref{ZWWWZ001}--\eqref{ZWWWZ003} that Theorem \ref{coro00389} holds.

\end{proof}

\section{The perfect conductivity problem}\label{SECCC000}
In this section, we consider the corresponding scalar case, that is, the perfect conductivity equation as follows:
\begin{align}\label{con002}
\begin{cases}
\Delta u=0,&\hbox{in}\;\Omega,\\
u=C_{i}, &\hbox{on}\;\partial D_{i},\;i=1,2,\\
\int_{\partial D_{i}}\frac{\partial u}{\partial\nu}\big|_{+}=0,\;&i=1,2,\\
u=\varphi, &\mathrm{on}\;\partial D,
\end{cases}
\end{align}
where $\varphi\in C^{2}(\partial D)$ and the free constants $C_{1}$ and $C_{2}$ are determined by the third line of (\ref{con002}) and
$$\frac{\partial u}{\partial\nu}\Big|_{+}:=\lim_{\tau\rightarrow0}\frac{u(x+\nu\tau)-u(x)}{\tau}.$$
%Here and below $\nu$ denotes the unit outer normal to the domains and the subscript $\pm$ represents the limit from outside and inside the domain, respectively.

Similarly as before, define the blow-up factors $\mathcal{Q}[\varphi]$ and $\mathcal{Q}^{\ast}[\varphi]$ as follows:
\begin{align}\label{LBD001}
\mathcal{Q}[\varphi]=\int_{\partial D_{1}}\frac{\partial u_{b}}{\partial\nu}\Big|_{+},\quad \mathcal{Q}^{\ast}[\varphi]=\int_{\partial D_{1}^{\ast}}\frac{\partial u_{b}^{\ast}}{\partial\nu}\Big|_{+},
\end{align}
where $u_{b}$ and $u_{b}^{\ast}$ satisfy
\begin{align*}
\begin{cases}
\Delta u_{b}=0,&\mathrm{in}\;\Omega,\\
u_{b}=C_{2},&\mathrm{on}\;\partial D_{1}\cup\partial D_{2},\\
%\int_{(\partial D_{1}^{\ast}\cup\partial D_{2})\setminus\Sigma'}\frac{\partial v_{e}^{\ast}}{\partial\nu}\big|_{+}=0,\\
u_{b}=\varphi,&\mathrm{on}\;\partial D,
\end{cases}\quad
\begin{cases}
\Delta u_{b}^{\ast}=0,&\mathrm{in}\;\Omega^{\ast},\\
u_{b}^{\ast}=C^{\ast},&\mathrm{on}\;(\partial D_{1}^{\ast}\setminus\{0\})\cup\partial D_{2},\\
%\int_{(\partial D_{1}^{\ast}\cup\partial D_{2})\setminus\Sigma'}\frac{\partial v_{e}^{\ast}}{\partial\nu}\big|_{+}=0,\\
u_{b}^{\ast}=\varphi,&\mathrm{on}\;\partial D,
\end{cases}
\end{align*}
respectively. Here the explicit value of $C^{\ast}$ is given by
\begin{align}\label{KTN001}
C^{\ast}=&
\begin{cases}
\frac{\sum\limits^{2}_{i=1}b_{i}^{\ast}}{\sum\limits^{2}_{i,j=1}a_{ij}^{\ast}},&m\geq d-1,\\
\frac{a_{11}^{\ast}b_{2}^{\ast}-b_{1}^{\ast}a_{21}^{\ast}}{a_{11}^{\ast}a_{22}^{\ast}-a_{12}^{\ast}a_{21}^{\ast}},&m<d-1,
\end{cases}
\end{align}
where, for $i,j=1,2,$
\begin{align*}
a_{ij}^{\ast}:=\int_{\Omega^{\ast}}\nabla v_{i}^{\ast}\nabla v_j^{\ast},\quad b_i^{\ast}:=-\int_{\partial D}\frac{\partial v_{i}^{\ast}}{\partial \nu}\large\Big|_{+}\cdot v_{0}^{\ast},
\end{align*}
with $v_{i}^{\ast}$, $i=0,1,2$ verifying
\begin{equation*}
\begin{cases}
\Delta v_{0}^{\ast}=0,&\mathrm{in}~\Omega^{\ast},\\
v_{0}^{\ast}=0,&\mathrm{on}~\partial D_{1}^{\ast}\cup \partial D_{2},\\
v_{0}^{\ast}=\varphi,&\mathrm{on}~\partial{D},
\end{cases}
\end{equation*}
and
\begin{align*}
\begin{cases}
\Delta v_{1}^{\ast}=0,&\mathrm{in}~\Omega^{\ast},\\
v_{1}^{\ast}=1,&\mathrm{on}~\partial{D}_{1}^{\ast}\setminus\{0\},\\
v_{1}^{\ast}=0,&\mathrm{on}~\partial D_{2}\cup\partial{D},
\end{cases}\quad
\begin{cases}
\Delta v_{2}^{\ast}=0,&\mathrm{in}~\Omega^{\ast},\\
v_{2}^{\ast}=1,&\mathrm{on}~\partial D_{2},\\
v_{2}^{\ast}=0,&\mathrm{on}~(\partial D_{1}^{\ast}\setminus\{0\})\cup\partial{D},
\end{cases}
\end{align*}
respectively.

Denote
\begin{align}\label{LATZ}
\bar{r}_{\varepsilon}=&
\begin{cases}
\varepsilon^{\min\{\frac{1}{4},\frac{m-d+1}{m}\}},&m>d-1,\\
|\ln\varepsilon|^{-1},&m=d-1,\\
\varepsilon^{\min\{\frac{1}{6},\frac{d-1-m}{12m}\}},&m<d-1.
\end{cases}
\end{align}

The principal result in this section is stated as follows.
\begin{theorem}\label{thmmm}
Let $D_{1},D_{2}\subset D\subseteq\mathbb{R}^{d}\,(d\geq2)$ are defined as above, conditions $\mathrm{(}${\bf{H1}}$\mathrm{)}$--$\mathrm{(}${\bf{H3}}$\mathrm{)}$ hold, and $\varphi\in C^{2}(\partial D)$. Then for a sufficiently small $\varepsilon>0$,
\begin{align}\label{KGC}
\mathcal{Q}[\varphi]=\mathcal{Q}^{\ast}[\varphi]+(\bar{r}_{\varepsilon}),
\end{align}
where the blow-up factors $\mathcal{Q}[\varphi]$ and $\mathcal{Q}^{\ast}[\varphi]$ are defined in \eqref{LBD001}, $\bar{r}_{\varepsilon}$ is defined by \eqref{LATZ}.
\end{theorem}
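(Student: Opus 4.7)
The plan is to mirror the argument used for Theorem \ref{JGR}, exploiting the fact that in the scalar setting the linear space of rigid perturbations collapses to the constants. This eliminates all matrix--theoretic components indexed by $\alpha = d+1,\ldots,\frac{d(d+1)}{2}$, leaves a single free constant $C_i$ on each inclusion, and reduces the coefficient system to a $2\times 2$ scalar one, matching the two regimes $m\geq d-1$ and $m<d-1$ already reflected in \eqref{KTN001} and \eqref{LATZ}.

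First I would establish the scalar analogs of Proposition \ref{thm86} and Corollary \ref{coro00z}: with $\bar v$ as in \eqref{zh001}, the three--step energy iteration (global $H^1$ bound, dyadic Caccioppoli in cylinders $\Omega_{s}(z')$, rescaling to unit size) yields
\begin{equation*}
\nabla v_{1} = \nabla \bar v + O\bigl(\delta^{(m-2)/m}\bigr)\quad\text{in }\Omega_{R},
\end{equation*}
and the exponentially decaying estimates $|\nabla v_{0}| + |\nabla(v_{1}+v_{2})| \leq C\delta^{-d/2}e^{-1/(2C\delta^{1-1/m})}$ and the $|x'|$-analog on $\Omega_R^{\ast}$. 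The proof is literally that of Section \ref{SEC005} with all correction terms $\mathcal{F}_{\alpha}$ dropped and the Korn inequality replaced by the trivial identity $|\nabla w|^{2}=|e(w)|^{2}$.

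Second I would prove the scalar analogs of Lemmas \ref{lemmabc} and \ref{KM323}. Using the translation trick for $v_{1}-v_{1}^{\ast}$ together with the maximum principle on $D\setminus(\overline{D_{1}\cup D_{1}^{\ast}\cup D_{2}\cup\mathcal{C}_{\varepsilon^{1/(2m)}}})$ gives $|v_{1}-v_{1}^{\ast}|\leq C\varepsilon^{1/2}$, and the rescaling/interpolation argument leading to \eqref{con035} upgrades this to $|\nabla(v_{1}-v_{1}^{\ast})|\leq C\varepsilon^{1/6}$ outside $\mathcal{C}_{\varepsilon^{1/(12m)}}$. Decomposing $a_{11}$ into contributions from $\Omega\setminus\Omega_{R}$, $\Omega_{\varepsilon^{1/(12m)}}$, and $\Omega_{R}\setminus\Omega_{\varepsilon^{1/(12m)}}$ as in \eqref{a1111} and evaluating the middle integral against the explicit $\partial_{x_d}\bar v$ yields
\begin{equation*}
a_{11}\asymp \rho_{0}(d,m;\varepsilon)\;\;(m\geq d-1),\qquad a_{11}=a_{11}^{\ast}+O\bigl(\varepsilon^{\min\{1/6,(d-1-m)/(12m)\}}\bigr)\;\;(m< d-1),
\end{equation*}
together with $\sum_{i}a_{i1}=\sum_{i}a_{i1}^{\ast}+O(\varepsilon^{1/4})$, its column analog, and $b_{i}=b_{i}^{\ast}+O(\varepsilon^{1/2})$ from the standard boundary estimates applied to $v_{1}-v_{1}^{\ast}$ on $\partial D$.

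Third I would solve for $C_{2}$ via Cramer's rule on the analog of \eqref{PLA001}. When $m\geq d-1$, summing the two rows and dividing by the dominant quantity $\sum_{i,j}a_{ij}$ (whose starred version is positive by the same energy argument as in \eqref{AJZ001}) produces $C_{2}=C^{\ast}+O(\varepsilon^{1/4}\rho_{0}^{-1}(d,m;\varepsilon))$, which simplifies to $O(\varepsilon^{\min\{1/4,(m-d+1)/m\}})$ for $m>d-1$ and to $O(|\ln\varepsilon|^{-1})$ for $m=d-1$. When $m<d-1$, the $2\times 2$ matrix $(a_{ij})$ itself is nondegenerate and Cramer directly yields $C_{2}=C^{\ast}+O(\varepsilon^{\min\{1/6,(d-1-m)/(12m)\}})$. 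Writing
\begin{equation*}
\mathcal{Q}[\varphi]-\mathcal{Q}^{\ast}[\varphi]=(C^{\ast}-C_{2})\sum_{i}a_{i1}+C^{\ast}\Bigl(\sum_{i}a_{i1}^{\ast}-\sum_{i}a_{i1}\Bigr)+(b_{1}-b_{1}^{\ast})
\end{equation*}
and inserting the four rates delivers the bound $O(\bar r_{\varepsilon})$ in \eqref{KGC}.

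The main obstacle is the book-keeping at the borderline $m=d-1$, where $a_{11}$ and $\sum_{i,j}a_{ij}$ are only logarithmically large and one must verify that the geometric--series manipulation used in \eqref{AGU01}--\eqref{QGH01} still produces the claimed $|\ln\varepsilon|^{-1}$ remainder rather than a constant error; this requires careful tracking of the precise constant in the leading term $\frac{4\mathcal{L}\mathcal M_{1}}{m\pi}\rho_{0}$ of $a_{11}$, exactly as in \eqref{AZW01}. Everything else is a faithful, and strictly simpler, transcription of the Lam\'e proof.
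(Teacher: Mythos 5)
Your proposal follows the paper's proof essentially line for line: scalar analogs of Proposition \ref{thm86}, Corollary \ref{coro00z}, and Lemmas \ref{lemmabc}--\ref{KM323} (the paper simply cites \cite{ZH202101} for the gradient estimates \eqref{OKA005}--\eqref{LTFA001} rather than re-deriving them), followed by Cramer's rule for $C_2$ and the same three-term decomposition of $\mathcal{Q}[\varphi]-\mathcal{Q}^{\ast}[\varphi]$. The only substantive divergence is minor and harmless: to verify nondegeneracy of $\sum_{i,j}a_{ij}^{\ast}$ and of $a_{11}^{\ast}a_{22}^{\ast}-a_{12}^{\ast}a_{21}^{\ast}$ you invoke the Gram-matrix positivity argument of \eqref{AJZ001}, whereas the paper applies the Hopf Lemma to the boundary fluxes of $v_1^{\ast},v_2^{\ast}$; both are correct. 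One small slip: the remainder you write as $O(\varepsilon^{1/4}\rho_0^{-1}(d,m;\varepsilon))$ should be the sum $O(\varepsilon^{1/4}+\rho_0^{-1}(d,m;\varepsilon))$, not the product --- your subsequent simplification to $O(\varepsilon^{\min\{1/4,(m-d+1)/m\}})$ for $m>d-1$ and $O(|\ln\varepsilon|^{-1})$ at $m=d-1$ shows you intended this.
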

\begin{remark}
We here would like to point out that Gorb and Novikov \cite{GN2012} were the first to capture a unified blow-up factor similar to $\mathcal{Q}[\varphi]$. Li \cite{Li2020} established the convergence between the blow-up factors $\mathcal{Q}[\varphi]$ and $\mathcal{Q}^{\ast}[\varphi]$ only in the case of $m\geq d-1$. By contrast, we completely solve the convergence in \eqref{KGC} for the generalized $m$-convex inclusions in all dimensions. The result in Theorem \ref{thmmm} is obtained by constructing the explicit value of $C^{\ast}$ in \eqref{KTN001}, which is different from that in \cite{GN2012,Li2020}.
\end{remark}

As seen in \cite{BLY2009,LLY2019,Li2020}, we decompose the solution $u$ of \eqref{con002} as follows:
\begin{align}\label{con00KL}
u=(C_{1}-C_{2})v_{1}+C_{2}(v_{1}+v_{2})+v_{0},\quad\mathrm{in}\;\Omega,
\end{align}
where $v_{i}$, $i=0,1,2$, are the solutions of
\begin{align*}
\begin{cases}
\Delta v_{0}=0,&\mathrm{in}\;\Omega,\\
v_{0}=0,&\mathrm{on}\;\partial D_{1}\cup\partial D_{2},\\
v_{0}=\varphi,&\mathrm{on}\;\partial D,
\end{cases}
\quad
\begin{cases}
\Delta v_{i}=0,&\mathrm{in}\;\Omega,\\
v_{i}=\delta_{ij},&\mathrm{on}\;\partial D_{j},\;i,j=1,2,\\
v_{i}=0,&\mathrm{on}\;\partial D,
\end{cases}
\end{align*}
respectively. For $i,j=1,2,$ denote
\begin{align*}
a_{ij}:=-\int_{\partial D_{j}}\frac{\partial v_{i}}{\partial\nu}\Big|_{+},\quad b_{j}:=\int_{\partial D_{j}}\frac{\partial v_{0}}{\partial\nu}\Big|_{+}.
\end{align*}

Observe that by applying the proofs of Theorems 2.1 and 2.2 in \cite{ZH202101} with a slight modification, we obtain that for $i=1,2,$
\begin{align}
\nabla v_{i}=&(-1)^{i-1}\nabla\bar{v}+O(\delta^{\frac{m-2}{m}}),\;\,\mathrm{in}\;\Omega_{R},\quad\|\nabla v_{i}\|_{L^{\infty}(\Omega\setminus\Omega_{R})}\leq C,\label{OKA005}\\
\nabla v_{i}^{\ast}=&(-1)^{i-1}\nabla\bar{v}^{\ast}+O(|x'|^{m-2}),\;\,\mathrm{in}\;\Omega_{R}^{\ast},\quad\|\nabla v_{i}^{\ast}\|_{L^{\infty}(\Omega^{\ast}\setminus\Omega_{R}^{\ast})}\leq C,\label{OKA005000}
\end{align}
and
\begin{align}
|\nabla v_{0}|+|\nabla(v_{1}+v_{2})|\leq& C\delta^{-\frac{d}{2}}e^{-\frac{1}{2C\delta^{1-1/m}}},\quad\mathrm{in}\;\Omega_{R},\label{LT}\\
|\nabla v_{0}^{\ast}|+|\nabla(v_{1}^{\ast}+v_{2}^{\ast})|\leq& C|x'|^{-\frac{md}{2}}e^{-\frac{1}{2C|x'|^{m-1}}},\quad\mathrm{in}\;\Omega_{R}^{\ast},\label{LTFA001}
\end{align}
where $\bar{v}$ is defined by \eqref{zh001} and $\delta$ is defined in \eqref{deta}.

Consequently, in light of \eqref{OKA005}--\eqref{LTFA001}, an immediate consequence of Lemmas \ref{lemmabc} and \ref{KM323} yields the following results.
\begin{lemma}\label{LGVC}
Assume as above. Then for a sufficiently small $\varepsilon>0$,
\begin{align*}
b_{i}=b_{i}^{\ast}+O(\varepsilon^{\frac{1}{2}}),\quad i=1,2,
\end{align*}
and
\begin{align*}
\begin{cases}
\tau_{2}^{-\frac{d-1}{m}}\lesssim\frac{a_{11}}{\rho_{0}(d,m;\varepsilon)}\lesssim\tau_{1}^{-\frac{d-1}{m}},&m\geq d-1,\\
a_{11}=a_{11}^{\ast}+O(\varepsilon^{\min\{\frac{1}{6},\frac{d-1-m}{12m}\}}),&m<d-1,
\end{cases}
\end{align*}
and
\begin{align*}
\sum^{2}_{i=1}a_{i1}=\sum^{2}_{i=1}a_{i1}^{\ast}+O(\varepsilon^{\frac{1}{4}}),\quad\sum^{2}_{i,j=1}a_{ij}=\sum^{2}_{i,j=1}a_{ij}^{\ast}+O(\varepsilon^{\frac{1}{4}}).
\end{align*}

\end{lemma}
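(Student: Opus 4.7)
The plan is to adapt, essentially line by line, the vector-valued proofs of Lemmas \ref{lemmabc} and \ref{KM323} to the scalar setting. Here the index set $\{\alpha = 1,\dots,\tfrac{d(d+1)}{2}\}$ collapses to a single element, the ``rigid displacement'' basis $\{\psi_{\alpha}\}$ reduces to the constant $1$, and the auxiliary function $\bar{u}_{1}^{\alpha}$ collapses to $\bar{v}$. All the structural ingredients (main-term expansions, iteration, rescaling) have ready scalar analogues via the scalar gradient bounds \eqref{OKA005}--\eqref{LTFA001}, with the scalar maximum principle replacing the elasticity maximum principle of \cite{MMN2007}.

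First, to prove $b_{i} = b_{i}^{\ast} + O(\varepsilon^{1/2})$, I would establish the scalar analogue of \eqref{AST123}, namely $|v_{i} - v_{i}^{\ast}| \leq C\varepsilon^{1/2}$ on $D \setminus \overline{D_{1} \cup D_{1}^{\ast} \cup D_{2} \cup \mathcal{C}_{\varepsilon^{1/(2m)}}}$. The three-step argument of \eqref{LKT6.007}--\eqref{LKT6.009} carries over verbatim: boundary regularity and the $\varepsilon$-translation give a $C\varepsilon$ bound on $\partial D_{1} \setminus D_{1}^{\ast}$ and $\partial D_{1}^{\ast} \setminus D_{1}$; integrating \eqref{OKA005} from the zero boundary value on $\partial D_{2}$ gives a $C(\varepsilon^{1-m\gamma} + \varepsilon^{m\gamma})$ bound on the cylinder $\{|x'| = \varepsilon^{\gamma}\}$ with $\gamma = 1/(2m)$; and the scalar maximum principle propagates the resulting bound into the exterior region. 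Standard boundary estimates near $\partial D$ then yield $|\nabla(v_{i} - v_{i}^{\ast})| \leq C\varepsilon^{1/2}$ on $\partial D$, from which the claim follows upon integration against $\varphi$.

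Second, for the estimates on $a_{11}$, I would use $a_{11} = \int_{\Omega} |\nabla v_{1}|^{2}$ and the three-region decomposition of \eqref{a1111} with $\bar{\gamma} = 1/(12m)$. The scalar main term $|\nabla \bar{v}|^{2} = \delta^{-2} + O(|x'|^{2m-2}\delta^{-2})$ together with \eqref{OKA005} turns the inner contribution into $\int_{|x'| < \varepsilon^{\bar{\gamma}}} \delta^{-1}\,dx'$; the annular and exterior pieces assemble, by the same Cauchy-matching used in \eqref{con03365}, into $\int_{\varepsilon^{\bar{\gamma}} < |x'| < R} (h_{1} - h_{2})^{-1}\,dx'$ plus an $R$-independent constant $M_{d}^{\ast}$. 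Matching these via the same $\varepsilon$-power algebra as in \eqref{LNQ001}--\eqref{LAU001} and \eqref{LNQ002} yields both the two-sided bound $a_{11} \asymp \rho_{0}(d,m;\varepsilon)$ for $m \geq d-1$ and the convergence $a_{11} = a_{11}^{\ast} + O(\varepsilon^{\min\{1/6,(d-1-m)/(12m)\}})$ for $m < d-1$.

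Third, for the sums $\sum_{i} a_{i1}$ and $\sum_{i,j} a_{ij}$, I would repeat Step 3 of Lemma \ref{lemmabc}, exploiting the exponential decay \eqref{LT}--\eqref{LTFA001} of $\nabla(v_{1}+v_{2})$ and $\nabla(v_{1}^{\ast}+v_{2}^{\ast})$. The scalar analogue of \eqref{MIH01} gives $|v_{1}+v_{2}-v_{1}^{\ast}-v_{2}^{\ast}| \leq C\varepsilon$ on $\partial(D \setminus \overline{D_{1} \cup D_{1}^{\ast} \cup D_{2} \cup \mathcal{C}_{\varepsilon^{1/m}}})$; the maximum principle, rescaling, and interpolation upgrade this to $|\nabla(v_{1}+v_{2}-v_{1}^{\ast}-v_{2}^{\ast})| \leq C\varepsilon^{1/4}$ outside the shrinking cap $\mathcal{C}_{\varepsilon^{1/(4m)}}$, and inside the cap \eqref{LT}--\eqref{LTFA001} bound the remaining integral by a power of $\varepsilon$ strictly larger than $1/4$, giving the $O(\varepsilon^{1/4})$ rate. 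The main obstacle, as in the elasticity case, is the careful book-keeping required to extract the $R$-independent constant $M_{d}^{\ast}$ in the regime $m < d-1$; since the scalar structure mirrors the tensor one term by term, no new analytic idea is required, and the constants propagate through the calculation in the obvious way.
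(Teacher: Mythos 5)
Your proposal is correct and follows the paper's own (implicit) argument: the paper records the scalar gradient asymptotics \eqref{OKA005}--\eqref{LTFA001} as scalar analogues of Proposition \ref{thm86} and Corollary \ref{coro00z} and then states Lemma \ref{LGVC} as an immediate consequence of repeating the proofs of Lemmas \ref{lemmabc} and \ref{KM323} in the scalar setting, exactly as you describe.
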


\begin{proof}[The proof of Theorem \ref{thmmm}]
Applying the third line of (\ref{con002}) to decomposition \eqref{con00KL}, we have
\begin{align}\label{LHAT001}
\begin{cases}
a_{11}(C_{1}-C_{2})+\sum\limits^{2}_{i=1}a_{i1}C_{2}=b_{1},\\
a_{12}(C_{1}-C_{2})+\sum\limits^{2}_{i=1}a_{i2}C_{2}=b_{2}.
\end{cases}
\end{align}
Then adding the first line of \eqref{LHAT001} to the second line, we get
\begin{align*}
\begin{cases}
a_{11}(C_{1}-C_{2})+\sum\limits^{2}_{i=1}a_{i1}C_{2}=b_{1},\\
\sum\limits^{2}_{j=1}a_{1j}(C_{1}-C_{2})+\sum\limits^{2}_{i,j=1}a_{ij}C_{2}=\sum\limits^{2}_{i=1}b_{i},
\end{cases}
\end{align*}
which, together with Lemma \ref{LGVC} and Cramer's rule, yields that

$(\rm{i})$ if $m\geq d-1$, then
\begin{align*}
C_{2}=&\frac{a_{11}\sum\limits^{2}_{i=1}b_{i}-b_{1}\sum\limits^{2}_{j=1}a_{1j}}{a_{11}\sum\limits^{2}_{i,j=1}a_{ij}-\sum\limits^{2}_{i,j=1}a_{i1}a_{1j}}=\frac{\sum\limits^{2}_{i=1}b_{i}}{\sum\limits^{2}_{i,j=1}a_{ij}}(1+O(\rho^{-1}_{0}(d,m;\varepsilon)))\notag\\
=&\frac{\sum\limits^{2}_{i=1}b_{i}^{\ast}}{\sum\limits^{2}_{i,j=1}a_{ij}^{\ast}}
\begin{cases}
1+O(\varepsilon^{\min\{\frac{1}{4},\frac{m-d+1}{m}\}}),&m>d-1,\\
1+O(|\ln\varepsilon|^{-1}),&m=d-1;
\end{cases}
\end{align*}

$(\rm{ii})$ if $m<d-1$, then
\begin{align*}
C_{2}=&\frac{a_{11}\sum\limits^{2}_{i=1}b_{i}-b_{1}\sum\limits^{2}_{j=1}a_{1j}}{a_{11}\sum\limits^{2}_{i,j=1}a_{ij}-\sum\limits^{2}_{i,j=1}a_{i1}a_{1j}}=\frac{a_{11}^{\ast}\sum\limits^{2}_{i=1}b_{i}^{\ast}-b_{1}^{\ast}\sum\limits^{2}_{j=1}a_{1j}^{\ast}}{a_{11}^{\ast}\sum\limits^{2}_{i,j=1}a_{ij}^{\ast}-\sum\limits^{2}_{i,j=1}a_{i1}^{\ast}a_{1j}^{\ast}}(1+O(\varepsilon^{\min\{\frac{1}{6},\frac{d-1-m}{12m}\}}))\notag\\
=&\frac{a_{11}^{\ast}b_{2}^{\ast}-b_{1}^{\ast}a_{12}^{\ast}}{a_{11}^{\ast}a_{22}^{\ast}-a_{12}^{\ast}a_{21}^{\ast}}(1+O(\varepsilon^{\min\{\frac{1}{6},\frac{d-1-m}{12m}\}})).
\end{align*}
We now demonstrate that $\sum\limits^{2}_{i,j=1}a_{ij}^{\ast}\neq0$ and $a_{11}^{\ast}a_{22}^{\ast}-a_{12}^{\ast}a_{21}^{\ast}\neq0$. To begin with, it follows from the Hopf Lemma that
\begin{align*}
\frac{\partial v_{1}^{\ast}}{\partial\nu}\Big|_{\partial D_{1}^{\ast}\setminus\{0\}}<0,\quad\frac{\partial v_{1}^{\ast}}{\partial\nu}\Big|_{\partial D}<0,\quad\frac{\partial v_{2}^{\ast}}{\partial\nu}\Big|_{\partial D_{1}^{\ast}\setminus\{0\}}>0,\quad\frac{\partial v_{2}^{\ast}}{\partial\nu}\Big|_{\partial D}<0.
\end{align*}
This implies that
\begin{align*}
\sum\limits^{2}_{i,j=1}a_{ij}^{\ast}=&-\left(\int_{\partial D_{1}^{\ast}\cup\partial D_{2}}\frac{\partial v_{1}^{\ast}}{\partial\nu}\Big|_{+}+\int_{\partial D_{1}^{\ast}\cup\partial D_{2}}\frac{\partial v_{2}^{\ast}}{\partial\nu}\Big|_{+}\right)\notag\\
=&-\left(\int_{\partial D}\frac{\partial v_{1}^{\ast}}{\partial\nu}\Big|_{+}+\int_{\partial D}\frac{\partial v_{2}^{\ast}}{\partial\nu}\Big|_{+}\right)>0,
\end{align*}
and
\begin{align*}
a_{11}^{\ast}a_{22}^{\ast}-a_{12}^{\ast}a_{21}^{\ast}=&\int_{\partial D_{1}^{\ast}}\frac{\partial v_{1}^{\ast}}{\partial\nu}\Big|_{+}\int_{\partial D_{2}}\frac{\partial v_{2}^{\ast}}{\partial\nu}\Big|_{+}-\int_{\partial D_{1}^{\ast}}\frac{\partial v_{2}^{\ast}}{\partial\nu}\Big|_{+}\int_{\partial D_{2}}\frac{\partial v_{1}^{\ast}}{\partial\nu}\Big|_{+}\notag\\
=&\int_{\partial D_{1}^{\ast}}\frac{\partial v_{1}^{\ast}}{\partial\nu}\Big|_{+}\int_{\partial D}\frac{\partial v_{2}^{\ast}}{\partial\nu}\Big|_{+}-\int_{\partial D_{1}^{\ast}}\frac{\partial v_{2}^{\ast}}{\partial\nu}\Big|_{+}\int_{\partial D}\frac{\partial v_{1}^{\ast}}{\partial\nu}\Big|_{+}>0,
\end{align*}
where we utilized the fact that
\begin{align*}
\int_{\partial D_{1}^{\ast}\cup\partial D_{2}}\frac{\partial v_{i}^{\ast}}{\partial\nu}\Big|_{+}=\int_{\partial D}\frac{\partial v_{i}^{\ast}}{\partial\nu}\Big|_{+},\quad i=1,2.
\end{align*}

Therefore, we obtain
\begin{align}\label{OFAT}
C_{2}=C^{\ast}+O(\bar{r}_{\varepsilon}),
\end{align}
where $C^{\ast}$ is defined in \eqref{KTN001}, $\bar{r}_{\varepsilon}$ is defined by \eqref{LATZ}. By linearity, we have
\begin{align}\label{KTF}
u_{b}=C_{2}(v_{1}+v_{2})+v_{0},\quad u_{b}^{\ast}=C^{\ast}(v_{1}^{\ast}+v_{2}^{\ast})+v_{0}^{\ast}.
\end{align}
Then in view of decomposition \eqref{KTF} and using integration by parts, we rewrite $\mathcal{Q}[\varphi]$ and $\mathcal{Q}^{\ast}[\varphi]$ as follows:
\begin{align*}
\mathcal{Q}[\varphi]=-C_{2}\sum^{2}_{i=1}a_{i1}+b_{1},\quad\mathcal{Q}^{\ast}[\varphi]=-C^{\ast}\sum^{2}_{i=1}a_{i1}^{\ast}+b_{1}^{\ast}.
\end{align*}
This, in combination with \eqref{OFAT} and Lemma \ref{LGVC}, leads to that
\begin{align*}
\mathcal{Q}[\varphi]-\mathcal{Q}^{\ast}[\varphi]=&(C^{\ast}-C_{2})\sum^{2}_{i=1}a_{i1}+C^{\ast}\left(\sum^{2}_{i=1}a_{i1}^{\ast}-\sum^{2}_{i=1}a_{i1}\right)+b_{1}-b_{1}^{\ast}=O(\bar{r}_{\varepsilon}).
\end{align*}
The proof is complete.
\end{proof}
From \eqref{con00KL} and \eqref{KTF}, we get
\begin{align}\label{MDA}
\nabla u=(C_{1}-C_{2})\nabla v_{1}+\nabla u_{b},\quad\mathrm{in}\;\Omega.
\end{align}
Then substituting \eqref{MDA} into the third line of \eqref{con002}, we derive
\begin{align*}
a_{11}(C_{1}-C_{2})=\mathcal{Q}[\varphi],
\end{align*}
which yields that
\begin{align}\label{OLGN001}
\nabla u=\frac{\mathcal{Q}[\varphi]}{a_{11}}\nabla v_{1}+\nabla u_{b},\quad\mathrm{in}\;\Omega.
\end{align}

Therefore, in view of decomposition \eqref{OLGN001} and combining Theorem \ref{thmmm}, Lemma \ref{LGVC}, \eqref{OKA005}--\eqref{LTFA001}, we immediately obtain the following two corollaries. The first corollary is listed as follows.
\begin{corollary}\label{CCCOOO0}
Assume that $D_{1},D_{2}\subset D\subseteq\mathbb{R}^{d}\,(d\geq2)$ are defined as above, conditions $\mathrm{(}${\bf{H1}}$\mathrm{)}$--$\mathrm{(}${\bf{H3}}$\mathrm{)}$ hold, and $\varphi\in C^{2}(\partial D)$. Let $u\in H^{1}(D)\cap C^{1}(\overline{\Omega})$ be the solution of \eqref{con002}. If $\mathcal{Q}^{\ast}[\varphi]\neq0$, then for a sufficiently small $\varepsilon>0$ and $x\in\{|x'|=0\}\cap\Omega$,

$(\rm{i})$ for $m\geq d-1$,
\begin{align*}
\frac{\tau_{1}^{\frac{d-1}{m}}\mathcal{Q}^{\ast}[\varphi]}{\varepsilon\rho_{0}(d,m;\varepsilon)}\lesssim|\nabla u|\lesssim&
\frac{\tau_{2}^{\frac{d-1}{m}}\mathcal{Q}^{\ast}[\varphi]}{\varepsilon\rho_{0}(d,m;\varepsilon)};
\end{align*}

$(\rm{ii})$ for $m<d-1$,
\begin{align*}
|\nabla u|\simeq\frac{\mathcal{Q}^{\ast}[\varphi]}{a_{11}^{\ast}}\frac{1}{\varepsilon},
\end{align*}
where $\rho_{0}(d,m;\varepsilon)$ is defined by \eqref{rate00}.
\end{corollary}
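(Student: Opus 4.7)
\medskip

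The plan is to exploit the scalar analogue of the decomposition \eqref{Decom002}, namely identity \eqref{OLGN001}, which reads
\[
\nabla u=\frac{\mathcal{Q}[\varphi]}{a_{11}}\nabla v_{1}+\nabla u_{b},\qquad\mathrm{in}\;\Omega.
\]
At a point $x$ with $x'=0'$ the thickness degenerates to $\delta(0')=\varepsilon$, so $\partial_{x_{d}}\bar{v}(0',x_{d})=1/\varepsilon$, and \eqref{OKA005} yields
\[
|\nabla v_{1}(x)|=\frac{1}{\varepsilon}+O(\varepsilon^{(m-2)/m}),
\]
which is of order $\varepsilon^{-1}$. At the same time, the regular part $\nabla u_{b}$ obeys the bound \eqref{LT} for $\nabla v_{0}$ and $\nabla(v_{1}+v_{2})$ (combined via $u_{b}=C_{2}(v_{1}+v_{2})+v_{0}$ and the boundedness of $C_{2}$ coming from \eqref{OFAT}), so
\[
|\nabla u_{b}(x)|\leq C\varepsilon^{-d/2}e^{-1/(2C\varepsilon^{(m-1)/m})}\ll \varepsilon^{-1}\rho_{0}^{-1}(d,m;\varepsilon).
\]
Hence the singular part $(\mathcal{Q}[\varphi]/a_{11})\nabla v_{1}$ is the sole contributor to the leading-order blow-up at $x'=0'$.

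Next I would insert Theorem \ref{thmmm} and the appropriate case of Lemma \ref{LGVC} to turn the prefactor into the quantities appearing in the statement. For $m\geq d-1$ the lemma gives two-sided bounds
\[
\tau_{2}^{-(d-1)/m}\rho_{0}(d,m;\varepsilon)\lesssim a_{11}\lesssim\tau_{1}^{-(d-1)/m}\rho_{0}(d,m;\varepsilon),
\]
and Theorem \ref{thmmm} gives $\mathcal{Q}[\varphi]=\mathcal{Q}^{\ast}[\varphi]+O(\bar{r}_{\varepsilon})$. Since $\mathcal{Q}^{\ast}[\varphi]\neq 0$ is assumed and $\bar{r}_{\varepsilon}\to 0$, the prefactor satisfies
\[
\frac{\tau_{1}^{(d-1)/m}\mathcal{Q}^{\ast}[\varphi]}{\rho_{0}(d,m;\varepsilon)}
\lesssim \frac{\mathcal{Q}[\varphi]}{a_{11}}\lesssim
\frac{\tau_{2}^{(d-1)/m}\mathcal{Q}^{\ast}[\varphi]}{\rho_{0}(d,m;\varepsilon)}.
\]
Multiplying by $|\nabla v_{1}(0',x_{d})|\simeq\varepsilon^{-1}$ and absorbing the negligible contribution of $\nabla u_{b}$ produces statement (i).

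For the subcritical regime $m<d-1$, the lemma instead gives $a_{11}=a_{11}^{\ast}+O(\varepsilon^{\min\{1/6,(d-1-m)/(12m)\}})$, and since $a_{11}^{\ast}$ is a fixed nonzero constant (which follows from ellipticity and the independence argument used inside the proof of Lemma \ref{COOO} via Lemma \ref{GLW}), we get the sharp asymptotic $\mathcal{Q}[\varphi]/a_{11}=\mathcal{Q}^{\ast}[\varphi]/a_{11}^{\ast}\,(1+o(1))$. Combined with $|\nabla v_{1}|\simeq \varepsilon^{-1}$ and the exponential smallness of $\nabla u_{b}$, this yields (ii).

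The only nontrivial step is verifying that at $x'=0'$ the correction term in \eqref{OKA005} is genuinely of lower order than $\varepsilon^{-1}$ (it is, being $O(\varepsilon^{(m-2)/m})$) and that the $a_{11}^{-1}\nabla u_{b}$ remainder does not interfere; both follow immediately from the exponential decay in \eqref{LT} and the polynomial lower bound $a_{11}\gtrsim\rho_{0}(d,m;\varepsilon)\gtrsim 1$. No genuine obstacle arises: the Corollary is essentially a packaging of Theorem \ref{thmmm}, Lemma \ref{LGVC}, and the scalar leading-term expansion \eqref{OKA005}.
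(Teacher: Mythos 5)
Your proposal is correct and matches the paper's own argument: the paper states explicitly that the corollary follows from decomposition \eqref{OLGN001}, Theorem \ref{thmmm}, Lemma \ref{LGVC}, and the estimates \eqref{OKA005}--\eqref{LTFA001}, which is precisely the chain you use. One tiny slip: you wrote ``$a_{11}^{-1}\nabla u_{b}$'' for the remainder (in \eqref{OLGN001} the regular part $\nabla u_{b}$ is not divided by $a_{11}$), and for $a_{11}^{\ast}\neq 0$ in the scalar setting it is simpler to observe $a_{11}^{\ast}=\int_{\Omega^{\ast}}|\nabla v_{1}^{\ast}|^{2}>0$ directly (since $v_{1}^{\ast}$ is nonconstant) than to appeal to Lemma \ref{GLW}; neither affects the correctness.
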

\begin{remark}
Although the optimality of the blow-up rate for any $m,d\geq2$ has been solved in the previous work \cite{ZH202101}, we here use a unified blow-up factor to show the optimality of the blow-up rate again.
\end{remark}

%Similarly as in \eqref{KATZ001}, we have
%\begin{align}\label{zadz0109865}
%a_{11}=&\int_{\varepsilon^{\frac{1}{12m}}<|x'|<R}\frac{dx'}{h_{1}(x')-h_{2}(x')}+\int_{|x'|<\varepsilon^{\gamma}}\frac{dx'}{\varepsilon+h_{1}(x')-h_{2}(x')}\notag\\
%&+A_{R}^{\ast}+O(1)\varepsilon^{\frac{1}{6m}},
%\end{align}
%where
Denote
\begin{align}\label{LDABN}
\mathcal{G}^{\ast}_{m}:=&
\begin{cases}
\frac{1}{\mathcal{M}_{1}}\left(M_{R}^{\ast}+\frac{4\int^{\frac{\pi}{2}}_{0}\ln R(\theta)d\theta}{\sqrt{\tau_{1}\tau_{2}}}\right),&m=2,\\
\frac{m\pi}{4\mathcal{M}_{1}\int^{\frac{\pi}{2}}_{0}E(\theta)d\theta}\left(M_{R}^{\ast}-\frac{8\int^{\frac{\pi}{2}}_{0}E(\theta)(R(\theta))^{2-m}d\theta}{m(m-2)\sqrt[m]{\tau_{1}\tau_{2}}}\right),&m>2,
\end{cases}
\end{align}
where $E(\theta)$ and $R(\theta)$ are defined by \eqref{LRANM001}--\eqref{KN},
\begin{align*}
M_{R}^{\ast}=&\int_{\Omega^{\ast}\setminus\Omega_{R}^{\ast}}|\nabla v_{1}^{\ast}|^{2}+2\int_{\Omega_{R}^{\ast}}\nabla\bar{v}^{\ast}\cdot\nabla(v_{1}^{\ast}-\bar{v}^{\ast})+\int_{\Omega^{\ast}_{R}}\big(|\nabla(v_{1}^{\ast}-\bar{v}^{\ast})|^{2}+|\nabla_{x'}\bar{v}^{\ast}|^{2}\big).
\end{align*}
From \eqref{OKA005000}, we know that $M_{R}^{\ast}$ is a bounded constant. Then we state the second corollary as follows.

\begin{corollary}\label{CCCOOO6}
Assume that $D_{1},D_{2}\subset D\subseteq\mathbb{R}^{3}$ are defined as above, conditions \eqref{ZCZ009} and $\mathrm{(}${\bf{H2}}$\mathrm{)}$--$\mathrm{(}${\bf{H3}}$\mathrm{)}$ hold, and $\varphi\in C^{2}(\partial D)$. Let $u\in H^{1}(D)\cap C^{1}(\overline{\Omega})$ be the solution of \eqref{con002}. If $\mathcal{Q}^{\ast}[\varphi]\neq0$, then for a sufficiently small $\varepsilon>0$ and $x\in\Omega_{R}$,

$(\rm{i})$ if $m=2$, then
\begin{align*}
\nabla u=\frac{\mathcal{Q}^{\ast}[\varphi]}{\mathcal{M}_{1}}\frac{1+O(|\ln\varepsilon|^{-1})}{|\ln\varepsilon|+\mathcal{G}^{\ast}_{2}}\nabla\bar{v}+O(1);
\end{align*}

$(\rm{ii})$ if $m>2$, then
\begin{align*}
\nabla
u=\frac{m\pi\varepsilon^{\frac{m-2}{m}}\mathcal{Q}^{\ast}[\varphi]}{4\mathcal{M}_{1}\int^{\frac{\pi}{2}}_{0}E(\theta)d\theta}\frac{1+O(\varepsilon^{\min\{\frac{1}{4},\frac{m-2}{m}\}})}{1+\mathcal{G}^{\ast}_{m}\varepsilon^{\frac{m-2}{m}}}\nabla\bar{v}+O(1)\delta^{\frac{m-2}{m}},
\end{align*}
where $\delta$ is defined in \eqref{deta}, $\mathcal{M}_{1}$ is defined by \eqref{WEN}, $E(\theta)$ is defined by \eqref{LRANM001}, $\mathcal{G}_{m}^{\ast}$ is define in \eqref{LDABN}.

\end{corollary}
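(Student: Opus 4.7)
The starting point is the decomposition \eqref{OLGN001}, namely $\nabla u = \tfrac{\mathcal{Q}[\varphi]}{a_{11}}\nabla v_{1} + \nabla u_{b}$. Theorem \ref{thmmm} already handles the numerator via $\mathcal{Q}[\varphi] = \mathcal{Q}^{\ast}[\varphi] + O(\bar{r}_{\varepsilon})$, the estimate \eqref{OKA005} provides $\nabla v_{1} = \nabla\bar{v} + O(\delta^{(m-2)/m})$ in $\Omega_{R}$, and \eqref{LT} delivers $|\nabla u_{b}| = O(1)$ with super-polynomial decay. So the only substantive task left is a two-term asymptotic expansion of $a_{11}$ under the specific geometry \eqref{ZCZ009}, which must reproduce the dominant singular term together with the bounded constant $\mathcal{K}_{m}^{\ast}$ appearing in \eqref{LDABN}.

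The plan for this expansion mirrors Step 1 of the proof of Theorem \ref{coro00389}, but in the scalar setting. Writing $a_{11} = \int_{\Omega}|\nabla v_{1}|^{2}$ and decomposing into the slices $\Omega\setminus\Omega_{R}$, $\Omega_{R}\setminus\Omega_{\varepsilon^{1/(12m)}}$ and $\Omega_{\varepsilon^{1/(12m)}}$, I would use \eqref{OKA005}--\eqref{OKA005000} to replace $\nabla v_{1}$ by $\nabla\bar{v}$ on the inner slices and to identify the bulk contribution from $\Omega\setminus\Omega_{R}$ with the corresponding integral on $\Omega^{\ast}$. Integrating $|\partial_{x_{d}}\bar{v}|^{2}$ in $x_{d}$ collapses the main piece to $\int_{|x'|<R}\frac{dx'}{\varepsilon+h_{1}(x')-h_{2}(x')}$, yielding
\begin{align*}
a_{11} = \int_{|x'|<R}\frac{dx'}{\varepsilon+h_{1}(x')-h_{2}(x')} + M_{R}^{\ast} + O(\varepsilon^{1/(6m)}),
\end{align*}
the scalar counterpart of \eqref{KATZ00100}. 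Plugging in \eqref{ZCZ009} and carrying out the same change of variables used in \eqref{HTZ} (polar coordinates after rescaling $x_{i} \mapsto \tau_{i}^{1/m} x_{i}$) evaluates the integral in closed form and produces
\begin{align*}
a_{11} = \frac{4\mathcal{M}_{1}\int_{0}^{\pi/2}E(\theta)\,d\theta}{m\pi}\,\rho_{0}(3,m;\varepsilon) + \mathcal{K}_{m}^{\ast} + O(\varepsilon^{1/(6m)}),
\end{align*}
with $\mathcal{K}_{m}^{\ast}$ precisely as in \eqref{LDABN}.

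With this expansion in hand, the reciprocal $1/a_{11}$ is computed by the same geometric-series identity used in \eqref{AZW01}, split into the two cases $m=2$ (where $\rho_{0} = |\ln\varepsilon|$) and $m>2$ (where $\rho_{0} = \varepsilon^{(2-m)/m}$). Substituting this together with $\mathcal{Q}[\varphi] = \mathcal{Q}^{\ast}[\varphi] + O(\bar{r}_{\varepsilon})$ into \eqref{OLGN001}, then using $\nabla v_{1} = \nabla\bar{v} + O(\delta^{(m-2)/m})$ from \eqref{OKA005} and absorbing $\nabla u_{b}$ into the additive error (which it easily dominates by its exponential decay from \eqref{LT}), yields the two asymptotic formulas exactly as stated.

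The main obstacle is bookkeeping rather than technique: one must track which error terms dominate in each range of $m$ (note that for $m>2$ the relative remainder $\max\{\bar{r}_{\varepsilon},\varepsilon^{1/(6m)}\rho_{0}^{-1}\} = \varepsilon^{\min\{1/4,(m-2)/m\}}$ gives the claimed accuracy), verify that the annular contribution from $\Omega_{R}\setminus\Omega_{\varepsilon^{1/(12m)}}$ is indeed subdominant (requires only $m\geq 2$), and, crucially, show that $\mathcal{K}_{m}^{\ast}$ is genuinely independent of the length scale $R$. The last point is resolved by the same uniqueness argument used at the end of Step 1 in the proof of Theorem \ref{coro00389}: any two admissible choices $r_{1}^{\ast}\neq r_{2}^{\ast}$ produce $\mathcal{K}_{m}^{\ast}$-values whose difference is $O(\varepsilon^{1/(12m)})$ and hence must vanish.
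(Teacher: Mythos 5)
Your proposal is correct and follows the same route the paper intends: it reconstructs the conclusion the paper labels as "immediate" by carrying out the scalar analogue of the expansion \eqref{KATZ00100}--\eqref{HTZ}--\eqref{LMT01} for $a_{11}$, the reciprocal manipulation \eqref{AZW01}, and substitution into \eqref{OLGN001} together with Theorem \ref{thmmm}, Lemma \ref{LGVC}, and \eqref{OKA005}--\eqref{LTFA001}. The bookkeeping of error exponents (in particular $\min\{1/4,(m-2)/m\}\leq(6m-11)/(6m)$ for $m>2$, and absorption of the $\nabla u_b$ and $\nabla(v_1-\bar v)$ contributions into $O(1)\delta^{(m-2)/m}$) and the $R$-independence argument for $\mathcal{K}_m^\ast$ are handled exactly as in Step 1 of the proof of Theorem \ref{coro00389}.
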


%\noindent{\bf{\large Statements.}} The authors state that there is no conflict of interest. The manuscript has no associated data.

\noindent{\bf{\large Acknowledgements.}} C. Miao was supported by the National Key Research and Development Program of China (No. 2020YFA0712900) and NSFC Grant 11831004. Z. Zhao was partially supported by CPSF (2021M700358).

\end{document}